\pdfoutput=1
\documentclass[11pt]{article} 
\def\arXiv{1}  
\usepackage{enumerate}
\usepackage{amssymb}
\usepackage{amsbsy}
\usepackage{amsmath}
\usepackage{amsthm}
\usepackage{amsfonts}
\usepackage{latexsym}
\usepackage{graphicx}
\usepackage{mathtools}
\usepackage{xcolor}
\usepackage{enumitem}
\usepackage{thmtools}
\usepackage{thm-restate}
\usepackage{graphicx}
\usepackage{color}
\usepackage{bbm}
\usepackage{xifthen}
\usepackage{xspace}
\usepackage{mathtools}
\usepackage{titlesec}
\usepackage{setspace}
\usepackage{etoolbox}
\usepackage{xfrac}
\usepackage{esvect}
\usepackage{nicefrac}
\usepackage{cases}
\usepackage{empheq}
\usepackage{booktabs}
\usepackage{multirow}
\usepackage{xparse}
\usepackage{caption}
\usepackage{subcaption}
\usepackage{bbm}
\usepackage{placeins}

\newcommand{\notarxiv}[1]{foo}
\newcommand{\arxiv}[1]{ba}
\ifdefined \arXiv
\renewcommand{\arxiv}[1]{#1}%
\renewcommand{\notarxiv}[1]{\ignorespaces}%
\else%
\renewcommand{\arxiv}[1]{\ignorespaces}%
\renewcommand{\notarxiv}[1]{#1}%
\fi%

\arxiv{
	\usepackage[top=1in, right=1in, left=1in, bottom=1in]{geometry}
	\usepackage{url}
	\usepackage[hidelinks]{hyperref}
	\hypersetup{
		colorlinks=true,
		linkcolor=blue!70!black,
		citecolor=blue!70!black,
		urlcolor=blue!70!black}
	\usepackage[numbers, square]{natbib}
}

\notarxiv{
	\usepackage[subtle, all=normal, mathspacing=normal, floats=tight, 
	sections=tight]{savetrees}
	\usepackage[hidelinks=true]{hyperref}
}

\usepackage[linesnumbered,ruled,vlined]{algorithm2e}

\usepackage{cleveref}

\definecolor{darkblue}{rgb}{0,0,.75}
\newcommand{\mc}[1]{\mathcal{#1}}

\DeclarePairedDelimiter{\abs}{\lvert}{\rvert} %
\DeclarePairedDelimiter{\brk}{[}{]}
\DeclarePairedDelimiter{\crl}{\{}{\}}
\DeclarePairedDelimiter{\prn}{(}{)}

\DeclarePairedDelimiter{\norm}{\|}{\|}

\DeclareDocumentCommand\opnorm{ s o m }{%
	\IfBooleanTF{#1}{%
		\norm*{#3}_{\mathrm{op}}
	}{%
		\IfNoValueTF{#2}{%
			\norm{#3}_{\mathrm{op}}
		}{%
			\norm[#2]{#3}_{\mathrm{op}}
		}
	}
}

\newcommand{\inner}[2]{\left<#1,#2\right>}

\newcommand{\overeq}[1]{\overset{#1}{=}}
\newcommand{\overle}[1]{\overset{#1}{\le}}
\newcommand{\overge}[1]{\overset{#1}{\ge}}

\NewDocumentCommand\Ex{s O{} m }{%
	\mathbb{E}%
	\begingroup
	\IfBooleanTF{#1}
	{\ExInn*{#3}}
	{\ExInn[#2]{#3}}%
	\endgroup
}

\DeclarePairedDelimiterX\ExInn[1]{[}{]}{%
	\activatebar
	#1%
}

\RenewDocumentCommand\Pr{sO{}r()}{%
	\mathbb{P}%
	\begingroup
	\IfBooleanTF{#1}
	{\PrInn*{#3}}
	{\PrInn[#2]{#3}}%
	\endgroup
}

\DeclarePairedDelimiterX\PrInn[1](){%
	\activatebar
	#1%
}

\newcommand{\activatebar}{%
	\begingroup\lccode`~=`|
	\lowercase{\endgroup\def~}{\;\delimsize\vert\;}%
	\mathcode`|=\string"8000
}

\newcommand\numberthis{\addtocounter{equation}{1}\tag{\theequation}}

\newcommand{\defeq}{\coloneqq}

\newcommand{\half}{\frac{1}{2}}
\newcommand{\indic}[1]{\mathbbm{1}_{\!\left\{#1\right\}}} %
\newcommand{\R}{\mathbb{R}}
\newcommand{\Z}{\mathbb{Z}}
\makeatletter
\long\def\@makecaption#1#2{
  \vskip 0.8ex
  \setbox\@tempboxa\hbox{\small {\bf #1:} #2}
  \parindent 1.5em  %
  \dimen0=\hsize
  \advance\dimen0 by -3em
  \ifdim \wd\@tempboxa >\dimen0
  \hbox to \hsize{
    \parindent 0em
    \hfil 
    \parbox{\dimen0}{\def\baselinestretch{0.96}\small
      {\bf #1.} #2
    } 
    \hfil}
  \else \hbox to \hsize{\hfil \box\@tempboxa \hfil}
  \fi
}
\makeatother

\definecolor{innerboxcolor}{rgb}{.9,.95,1}
\definecolor{outerlinecolor}{rgb}{.6,0,.2}

\newcommand{\E}{\mathbb{E}} %
\providecommand{\argmin}{\mathop{\rm argmin}}

\providecommand{\abs}{\mathop{\rm abs}}

\newtheorem{theorem}{Theorem}
\newtheorem{lemma}{Lemma}

\newtheorem{proposition}{Proposition}
\newtheorem{definition}{Definition}

\newtheorem*{claim*}{Claim}

\newcounter{example}

\newenvironment{example*}[1][]{
  \ifthenelse{\isempty{#1}}{%
    \noindent \textbf{Example:}\hspace*{.05em}
  }{%
    \noindent \textbf{Example} ({#1})\textbf{:}\hspace*{.05em}
  }
}{%
  $\Diamond$ \bigskip
}

\def\keywordname{{\bfseries \emph Keywords}}%
\def\keywords#1{\par\addvspace\medskipamount{\rightskip=0pt plus1cm
    \def\and{\ifhmode\unskip\nobreak\fi\ $\cdot$
    }\noindent\keywordname\enspace\ignorespaces#1\par}}

\newcommand{\Otil}[1]{\widetilde{O}( #1 )}

\newcommand{\grad}{\nabla}
\newcommand{\hess}{\nabla^2}
\newcommand{\del}{\partial}
\newcommand{\xopt}{x_\star}
\newcommand{\Var}{\mathrm{Var}}

\SetCommentSty{mycommfont}
\SetKwInput{KwInput}{Input}                %
\SetKwInput{KwOutput}{Output}              %
\SetKwInput{KwReturn}{Return}              %
\SetKwInput{KwParameters}{Parameters}
\SetKwInput{KwInput}{Input} 
\SetKwInput{KwParameter}{Parameters}                %
\SetKwProg{Fn}{function}{}{}
\SetKwInput{KwOutput}{Output}              %
\SetKwInput{KwReturn}{Return}              %
\SetKwComment{Comment}{$\triangleright$\ }{}
\SetKwRepeat{Do}{do}{while}
\SetKwBlock{Repeat}{Repeat}{}
\SetCommentSty{mycommfont}

\makeatletter
\newcommand\Block[2]{%
	#1%
	\algocf@group{#2}%
}
\makeatother

\newcommand{\oracle}[1][]{\mathcal{O}_{\textup{\textsf{#1}}}}
\newcommand{\proj}{\mathsf{Proj}}

\newcommand{\xset}{\mathcal{X}}

\newcommand{\xhat}{\hat{x}}
\newcommand{\hx}{\xhat}
\newcommand{\hg}{\hat{g}}

\newcommand{\bx}{\bar{x}}

\newcommand{\QT}{\mathcal{Q}_{T}}
\newcommand{\QTS}{\mathcal{Q}_{\widehat{T}}}

\newcommand{\SupT}[1][T]{\mathcal{S}^{\ge}_{#1}}
\newcommand{\SequalT}{\mathcal{S}^{=}_{T}}
\newcommand{\SsupT}[1][T]{\mathcal{S}^{>}_{#1}}
\newcommand{\SdownT}[1][T]{\mathcal{S}^{\le}_{#1}}
\newcommand{\SsdownT}{\mathcal{S}^{<}_T}

\newcommand{\lamminus}{\lambda_{1/2}}
\newcommand{\xminus}{x_{1/2}}
\newcommand{\tlambda}{\tilde{\lambda}}
\newcommand{\tw}{\tilde{w}}

\newcommand{\blambda}{\lambda'}
\newcommand{\MSoracle}{\oracle}

\newcommand{\ba}{a'}
\newcommand{\bA}{A'}
\newcommand{\tx}{\tilde{x}}

\newcommand{\blambdainit}{\blambda_{0}}

\newcommand{\T}{\top}

\newcommand{\lazyflag}{\textsc{lazy}}
\newcommand{\mscheck}{\textsc{CheckMS}}
\newcommand{\minresnewton}{\textsc{ConjRes}}
\newcommand{\lamvalid}{\lambda_{\mathrm{vld}}}
\newcommand{\laminvalid}{\lambda_{\mathrm{invld}}}
\newcommand{\failedbefore}{\textsc{FailedCheck}}

\newcommand{\adaOracle}{\mathcal{O}_{\textsf{\textup{aMSN}}}}
\newcommand{\foadaOracle}{\mathcal{O}_{\textsf{\textup{aMSN-fo}}}}

\newcommand{\filt}[1][t]{\mathcal{F}_{#1}}

\newcommand{\Holder}{H\"older\xspace}

\newcommand{\oracleP}[1][p,\nu]{\mathcal{O}_{#1\textsf{-reg}}}
\newcommand{\oracleBall}[1][r]{\mathcal{O}_{#1\textsf{-ball}}}
\newcommand{\oracleBaCoN}[1][r]{\mathcal{O}_{#1\textsf{-BaCoN}}}

\usepackage[textsize=scriptsize,textwidth=2cm]{todonotes}
\newcommand{\yairside}[1]{\todo[color=blue!10]{Yair: #1}}
\newcommand{\yair}[1]{{\bf \color{blue} Yair: #1}}
\newcommand{\yujia}[1]{{\bf \color{red} Yujia: #1}}
\newcommand{\sidford}[1]{{\bf \color{purple} Sidford: #1}}
\newcommand{\arun}[1]{{\bf \color{orange} Arun: #1}}
\newcommand{\danielle}[1]{{\bf \color{brown} Danielle: #1}}

 \renewcommand{\yairside}[1]{\ignorespaces}
 \renewcommand{\yair}[1]{\ignorespaces}
 \renewcommand{\yujia}[1]{\ignorespaces}
 \renewcommand{\sidford}[1]{\ignorespaces}
 \renewcommand{\arun}[1]{\ignorespaces}
 \renewcommand{\danielle}[1]{\ignorespaces} 
\usepackage[utf8]{inputenc} %
\usepackage[T1]{fontenc}    %
\usepackage{url}            %
\usepackage{booktabs}       %
\usepackage{amsfonts}       %
\usepackage{nicefrac}       %
\usepackage{microtype}      %
\usepackage{xcolor}         %

\title{Optimal and Adaptive Monteiro-Svaiter Acceleration} %

\arxiv{
\author{%
	Yair Carmon\thanks{Tel Aviv University, \texttt{ycarmon@tauex.tau.ac.il} and \texttt{hausler@mail.tau.ac.il}.} 
	~~
	Danielle Hausler\footnotemark[1] ~~ 
	Arun Jambulapati\thanks{Stanford University, \texttt{\{jmblpati,yujiajin,sidford\}@stanford.edu}.} ~~
	Yujia Jin\footnotemark[2] ~~
	Aaron Sidford\footnotemark[2]
}
}

\notarxiv{
	\author{%
		Yair Carmon\footnotemark[1]\thanks{Tel Aviv University, \texttt{ycarmon@tauex.tau.ac.il}, \texttt{hausler@mail.tau.ac.il}}\And
		Danielle Hausler\footnotemark[1]\And
		Arun Jambulapati\thanks{Stanford University, \texttt{\{jmblpati,yujiajin,sidford\}@stanford.edu}}\And
		Yujia Jin\footnotemark[2]\And
		Aaron Sidford\footnotemark[2]
	}
}

\arxiv{\date{}}

\begin{document}

\maketitle

\begin{abstract}
We develop a variant of the Monteiro-Svaiter (MS) acceleration framework that removes the need to solve an expensive implicit equation at every iteration. Consequently, for any $p\ge 2$ we improve the complexity of convex optimization with Lipschitz $p$th derivative by a logarithmic factor,  matching a lower bound. We also introduce an MS subproblem solver that requires no knowledge of problem parameters, and implement it as either a second- or first-order method via exact linear system solution or MinRes, respectively. On logistic regression our method outperforms previous second-order acceleration schemes, but under-performs Newton's method; simply iterating our first-order adaptive subproblem solver performs comparably to L-BFGS.
\end{abstract}

\section{Introduction}

We consider the problem of minimizing a convex function $f:\xset\to\R$ 
over closed convex set $\xset\subseteq \R^d$, given access to an oracle $\oracle:\xset\to\xset$ that minimizes a local model of $f$ around a given query point. A key motivating example of such an oracle is the cubic-regularized Newton step
\begin{equation}\label{eq:cr-oracle}
	\oracle[cr](y) = \argmin_{x\in\xset}\crl*{ f(y) + \grad f(y)^{\T} (x-y) + \half (x-y)^{\T} \hess f(y) (x-y) + \frac{M}{6}\norm{x-y}^3},
\end{equation}
i.e., minimizing the second-order Taylor approximation of $f$ around $y$ plus a cubic regularization term. However, our results apply to additional oracles including a simple gradient step, regularized higher-order Taylor expansions~\cite{baes2009estimate,gasnikov2019optimal, bubeck2019optimal, jiang2019optimal, bullins2020highly, nesterov2021implementable, grapiglia2020tensor,song2021unified, nesterov2021superfast, kamzolov2020near}, ball-constrained optimization~\cite{carmon2020acceleration}, and new adaptive oracles that we develop.

Seminal work by \citet{MonteiroS13a} (MS) shows how to accelerate the basic oracle iteration $x_{t+1} = \oracle(x_t)$. Their algorithm is based on the fact that many oracles, including $\oracle[cr]$, implicitly approximate proximal points. That is, for every $y$ and  $x=\oracle(y)$, there exists $\lambda_{x,y} > 0$ such that $x \approx \argmin_{x'\in\xset} \crl[\big]{ f(x') + \frac{1}{2}\lambda_{x,y}\norm{x'-y}^2}$, with the approximation error  controlled by a specific condition they define. MS prove that, under this  condition, the accelerated  proximal point method~\cite{guler1992new,salzo2012inexact} (with dynamic regularization parameter) maintains its rate of convergence. Applying their framework to $\oracle[cr]$ and assuming $\hess f$ is Lipschitz, they achieve error bounds that decay as $O(t^{-7/2}\log t)$ after $t$ oracle calls, improving the $O(t^{-2})$ rate of the basic $\oracle[cr]$ iteration~\cite{nesterov2006cubic} and the  $O(t^{-3})$ rate of an earlier accelerated method~\cite{nesterov2008accelerating}. Subsequent works apply variations of the MS framework to different oracles, obtaining improved theoretical guarantees for functions with continuous higher-order derivatives~\cite{gasnikov2019optimal, bubeck2019optimal, jiang2019optimal,song2021unified,alves2021variants}, parallel  optimization~\cite{bubeck2019complexity}, logistic and $\ell_\infty$ regression~\cite{bullins2020highly,carmon2020acceleration}, minimizing functions with H\"older continuous higher derivatives \cite{song2021unified}, and distributionally-robust optimization~\cite{carmon2021thinking,carmon2022distributionally}. 

However, all of these algorithms based on the MS framework share a common drawback: the iterate $y_t$ used to produce $x_{t+1}=\oracle(y_t)$ depends on the proximal parameter $\lambda_{t+1} = \lambda_{x_{t+1},y_t}$, which itself depends on both $x_{t+1}$ and $y_t$. This circular dependence necessitates solving an implicit equation for $\lambda_{t+1}$; MS (and many subsequent results based upon it) propose bisection procedures for doing so using a number of oracle calls logarithmic in the problem parameters.
From a theoretical perspective, the additional bisection complexity introduces a logarithmic gap between the upper bounds due to MS-based algorithms and the best known lower bounds \cite{arjevani2019oracle,grapiglia2020tensor} in a number of settings.  %

From a practical perspective, the use of bisection in the MS framework is undesirable as it potentially discards the optimization progress made by oracle calls during each bisection. In his textbook, \citet[\S4.3.3]{nesterov2018lectures} argues that the logarithmic cost of bisection likely renders the MS scheme for accelerating $\oracle[cr]$ inferior in practice to algorithms whose error decays at the asymptotically worse rate $O(t^{-3})$ but do not require bisection; he notes that removing the bisection from the MS algorithm is an ``open and challenging question in Optimization Theory.''
\citet{carmon2021thinking} also point out bisection as one of the main hurdles in making their theoretical scheme practical, while \citet{song2021unified} note this limitation and propose a heuristic alternative to bisection. 
  (See \Cref{apdx:related} for extended discussion of related work, including a concurrent and independent result by \citet{kovalev2022first}.)

\subsection{Our contributions}

We settle this open question, providing a variant of MS acceleration that does not require bisection (\Cref{sec:framework}). When combined with certain existing MS oracles (\Cref{ssec:solvers-prior}), our algorithm obtains complexity bounds that are optimal up to constant factors, improving over prior art by a logarithmic factor (see \Cref{table:summary}). In addition, our algorithm has no  parameters sensitive to tuning. 

We then go a step further and  (in \Cref{ssec:solvers-amsn}) develop an adaptive alternative to $\oracle[cr]$~(\Cref{eq:cr-oracle}). Our oracle does not require tuning the parameter $M$, which in theory should be proportional to the (difficult to estimate) Lipschitz constant of $\hess f$. Using our oracle, we obtain the optimal Hessian evaluation complexity $O(t^{-(4+3\nu)/2})$ for functions with
order-$\nu$ \Holder Hessian (Lipschitz Hessian is the $\nu=1$ special case), without requiring any knowledge of the \Holder constant and order $\nu$. 
Our oracle is also efficient: while existing complexity bounds for computing $\oracle[cr]$ require a logarithmic number of linear system solutions per call, our oracle requires a double-logarithmic number. Moreover, when used with our acceleration method, the number of linear system solves per iteration is essentially constant.

We also provide a first-order implementation of our adaptive oracle (\Cref{ssec:solvers-amsn-fo}). It approximately solves linear systems via first-order operations (Hessian-vector products) using MinRes/Conjugate Residuals~\cite{stiefel1955relaxationsmethoden,fong2012cg} with a simple, adaptive, stopping criterion lifted directly from our analysis. Our oracle attains the optimal first-order evaluation complexity for smooth functions up to an \emph{additive} logarithmic term, without knowledge of the gradient Lipschitz constant or any parameter tuning. Moreover, it maintains an optimal outer iteration complexity for \Holder Hessian of any order.

Finally, we report empirical results (\Cref{sec:experiments}).\footnote{The code for our experiments is available at \url{https://github.com/danielle-hausler/ms-optimal}.} On logistic regression problems, combining our optimal acceleration scheme with our adaptive oracle outperforms previously proposed accelerated second-order methods. However, we also show that (while somewhat helpful for $\oracle[cr]$ with a conservative choice of $H$), adding momentum to well-tuned or adaptive  second-order methods is \emph{harmful} in logistic regression: simply iterating our oracle---or, better yet, applying Newton's method---dramatically outperforms all ``accelerated'' algorithms. This important fact seems to have gone unobserved in the literature on accelerated second-order methods, despite logistic regression appearing in many related experiments~\cite{song2021unified,doikov2020inexact,mischenko2021regularized,jiang2020unified}. Simply iterating our adaptive oracle outperforms the classical accelerated gradient descent, and performs comparably to L-BFGS. 

\subsection{Limitations and outlook}\label{ssec:limitation}

While our algorithms resolve an enduring theoretical open problem in convex optimization, and are free of sensitive parameters that typically hinder theoretically-optimal methods, practical performance remains a limitation. On logistic regression, Newton's method is remarkably fast, and our acceleration scheme does not seem to help our adaptive oracle. We do not fully understand why this is so, but we suspect that it has to do with additional structure in logistic regression, which Newton's method can automatically exploit but momentum cannot. We believe that future research should identify the structure that makes Newton's method so efficient, and modifying momentum schemes to leverage it. 

Scalability is another important limitation. While our first-order oracle significantly improves scalability over the second-order oracle from which it is built, it still relies on exact gradient and Hessian-vector products. Therefore, it will have difficulty scaling up to very large datasets. Nevertheless, we hope that further scalability improvements may be possible by building an oracle that utilizes cheap stochastic gradient estimates instead of exact gradients, bringing with it the exciting prospect of a new and powerful adaptive stochastic gradient method. The alternative, probabilistic approximation condition we propose in \Cref{app:gen-framework} might be helpful in this regard.

\arxiv{A final limitation is that our theory and experiments center around convex optimization problems, whereas optimization in modern machine learning are mainly non-convex. However, many of the central techniques in modern machine learning, including momentum~\cite{nesterov1983method} and adaptive gradient methods~\cite{duchi2011adaptive} were initially proposed and analyzed in the context of convex optimization. We believe that our techniques might also prove  useful beyond the convex optimization landscape.}
 \arxiv{%
\arxiv{\subsection{Additional related work}\label{apdx:related}}
\notarxiv{\section{Additional related work}\label{apdx:related}}

\paragraph{Bisection-free methods for variational inequalities.} 

Monteiro and Svaiter also proposed second-order methods for solving variational inequalities for monotone operators with continuous derivatives~\cite{monteiro2012iteration}, and subsequent work provided improved rates for variational inequalities with continuous higher-order derivatives via tensor methods~\cite{bullins2020higher,jiang2022generalized}. These works also feature an implicit equation over a scalar regularization/step-size parameter, that necessitates a bisection and increases complexity by a logarithmic factor.  In recent papers, \citet{lin2022perseus} and~\citet{adil2022line} remove that logarithmic factor by developing  bisection-free methods for variational inequalities. However, applying these methods directly to convex optimization with Lipschitz $p$th derivatives yields a rate of $O(t^{-(p+1)/2})$ rather than the  optimal $O(t^{-(3p+1)/2})$ rate of our method. Moreover, these works remove bisections using techniques fundamentally different from ours. In particular, they do not apply a damping scheme on the $A_t$ sequence, nor do they apply a multiplicative update for the regularization parameter.

\paragraph{Adaptive Newton and tensor methods.} 
A number of works consider adaptive variants of the cubic-regularized Newton method and its tensor counterparts. \citet{cartis2011adaptive,gould2012updating} propose adaptive variants of cubic regularization for non-convex optimization. For convex optimization, 
\citet{mischenko2021regularized} provides a simple adaptive scheme converging at rate $O(t^{-2})$, followed by an improvement in its guarantee to $O(t^{-3})$ \cite{doikov2021gradient}. For tensor methods, \citet{jiang2020unified} proposes an adaptive regularization scheme for convex functions with Lipschitz-continuous $p^{th}$ derivatives which achieves the rate $O(t^{-p-1})$. In addition, \citet{grapiglia2020tensor} gives analogous results under $\nu$-H\"older continuity of the derivatives. 

Even for the case second-order methods with Lipschitz-continuous Hessian, an adaptive scheme with optimal rate $O(t^{-3.5})$ remained open prior to this work. Beyond removing the bisection from the MS framework, our key algorithmic techniques include directly considering quadratically-regularized Newton step (similar to~\cite{mischenko2021regularized,doikov2021gradient} and different from~\cite{grapiglia2020tensor,jiang2020unified}), and using the original MS approximation condition for selecting an appropriate regularization parameter, which is new in the context of adaptive methods. These techniques  allow us to adapt to both the constant and order of Hessian \Holder continuity simultaneously. 

\paragraph{Comparison to~\cite{kovalev2022first}.} 
In concurrent and independent theoretical work, \citet{kovalev2022first} propose an algorithm that also attains the optimal $p$th derivative evaluation complexity for convex optimization with Lipschitz $p$th derivatives. While also inspired by MS acceleration, the algorithm of~\cite{kovalev2022first} is quite different from ours: they replace the implicitly defined regularization parameters inherent to MS oracles by approximating proximal points with explicit, predetermined regularization parameters. To obtain these proximal points they apply a tensor-extragradient method, and stop it when the MS condition is met. By careful analysis, they show that---even though an individual outer acceleration step requires multiple derivative evaluations---the overall complexity of their method is optimal. In contrast, our method makes a single oracle call (high-order derivative evaluation) per step. To obtain optimal complexity, our method relies on a dynamic sequence of ``guessed'' regularization parameters and a momentum damping schemes that handles cases where these guesses overshoot. The two works offer complementary viewpoints of the algorithmic innovation required for removing bisection from the MS framework. 

In comparison to~\citet{kovalev2022first}, we believe that our algorithm offers advantages in terms of generality and adaptivity. From a generality perspective, our algorithm applies to every setting where the original MS framework applies. In addition to functions with Lipschitz $p$th derivatives, that includes functions with \Holder derivatives~\cite{song2021unified}, ball minimization oracles~\cite{carmon2020acceleration}, and a second-order oracles for functions with Lipschitz third derivative~\cite{nesterov2021superfast,kamzolov2020near}. While extending~\cite{kovalev2022first} to these settings may be possible, it would require additional work in formulating and analyzing an appropriate subproblem solver. Regarding adaptivity, like the original MS framework, our algorithm is agnostic to both the order of the Lipschitz derivative and its corresponding Lipschitz constant.
In contrast, \citet{kovalev2022first} require the derivative order for determining the regularization parameters, and the Lipschitz constant for the subproblem solver. 

\notarxiv{\newpage} %

}

\section{Removing bisection from the Monteiro-Svaiter framework}\label{sec:framework}

\begin{figure}[ht]
	\begin{minipage}[t]{0.51\linewidth}%
		\centering
		\begin{algorithm}[H]
			\DontPrintSemicolon
			\caption{Optimal MS Acceleration}\label{alg:optms}
			\KwInput{Initial $x_0$, function $f$, oracle $\oracle$}
			\KwParameters{Initial $\blambdainit$, multiplicative adjustment factor $\alpha > 1$}
			
			Set $v_0 = x_0$, $A_0=0$\;
			$\tx_1, \lambda_1 = \oracle(x_0;\blambdainit)$~~,~~$\blambda_1 = \lambda_1$\;
			\For{$t=0,1,\ldots,$}{
				$\ba_{t+1} = \frac{1}{2\blambda_{t+1}}\prn*{1 + \sqrt{1 + 4\blambda_{t+1} A_t}}$\label{line:a-quandratic}\;
				$\bA_{t+1} = A_t + \ba_{t+1}$\;\label{line:A-update}
				$y_{t} = \frac{A_{t}}{\bA_{t+1}} x_t + \frac{\ba_{t+1}}{\bA_{t+1}} v_t$\;\label{line:y-update}
				\lIf{$t>0$}{
					$\tx_{t+1}, \lambda_{t+1} = \oracle(y_t; \blambda_{t+1})$}
				\If{$\lambda_{t+1} \le \blambda_{t+1}$}{
					$a_{t+1}=\ba_{t+1}$, $A_{t+1} = \bA_{t+1}$\;\label{line:update-begin-1}
					$x_{t+1} = \tx_{t+1}$\label{line:x-update-1} \;
					{\color{green!60!black}$\blambda_{t+2} = \frac{1}{\alpha}\blambda_{t+1}$}\label{line:lam-update-1} 
				}
				\Else{
					$\gamma_{t+1} =\frac{\blambda_{t+1}}{\lambda_{t+1}}$\;\label{line:gamma-define}
					$a_{t+1}=\gamma_{t+1}\ba_{t+1}$, ~$A_{t+1} = A_t + a_{t+1}$ \;\label{line:a-update-2} 
					$x_{t+1} = \frac{(1-\gamma_{t+1})A_{t}}{A_{t+1}} x_t
					+ \frac{\gamma_{t+1} \bA_{t+1}}{A_{t+1}} \tx_{t+1}$\;\label{line:x-update-2} 
					{\color{green!60!black}$\blambda_{t+2} = \alpha\blambda_{t+1}$}\label{line:lam-update-2}
				}

				$v_{t+1} = v_t - a_{t+1}\grad f(\tx_{t+1})$\label{line:v-update}\;
			}
		\end{algorithm}
	\end{minipage}\hfill
	\begin{minipage}[t]{0.47\linewidth}%
		\centering
		\setcounter{algocf}{-1}
		\begin{algorithm}[H]
			\DontPrintSemicolon
			\caption{MS Acceleration}\label{alg:ms}
			\KwInput{Initial $x_0$, function $f$, oracle $\oracle$}
			\KwParameters{Bisection limits $\lambda^\ell$, $\lambda^h$, and tolerance $\rho > 1$}
			Set $v_0 = x_0$, $A_0=0$\;
			\For{$t=0,1,\ldots,$}{
				\SetKwBlock{Bisection}{find $\lambda_{t+1}$ such that}{arg3}
					$\lambda^\ell_{t+1}, \lambda^h_{t+1} = \lambda^\ell, \lambda^h$\;
					$\blambda_{t+1} = \frac{\lambda^\ell_{t+1} +  \lambda^h_{t+1}}{2}$\label{line:bisection-start}
					\;
						$\ba_{t+1} = \frac{1}{2\blambda_{t+1}}\prn*{1 + \sqrt{1 + 4\blambda_{t+1} A_t}}$\;\label{line:update-standard-start}
						$\bA_{t+1} = A_t + \ba_{t+1}$\;
						$y_{t} = \frac{A_{t}}{\bA_{t+1}} x_t + \frac{\ba_{t+1}}{\bA_{t+1}} v_t$\;
						$\tx_{t+1}, \lambda_{t+1} = \oracle(y_t; \blambda_{t+1})$\label{line:update-standard-end}

					\If{$\lambda_{t+1} \in [\frac{1}{\rho}\blambda_{t+1}, \blambda_{t+1}]$}{
						$a_{t+1}=\ba_{t+1}$, ~$A_{t+1} = \bA_{t+1}$\;\label{line:update-MS-start}
						$x_{t+1} = \tx_{t+1}$\;\label{line:update-MS-end}
					}
					\ElseIf{$\lambda_{t+1} < \frac{1}{\rho}\blambda_{t+1}$}
					{
						$\lambda^h_{t+1} = \blambda_{t+1}$\;
						{\color{red!75!black}Go to \cref*{line:bisection-start}}\;\label{line:restart-1}
					}
					\Else{
						$\lambda^l_{t+1} = \blambda_{t+1}$\;
						{\color{red!75!black} Go to \cref*{line:bisection-start}}\;\label{line:restart-2}
					}
					$v_{t+1} = v_t - a_{t+1}\grad f(\tx_{t+1})$\;\label{line:bisection-end}
				}
			\end{algorithm}
		\end{minipage}
	\end{figure}

In this section we present our acceleration algorithm (\Cref{alg:optms}) which removes bisection from the MS method (shown in stylized form as \Cref{alg:ms}) and thereby attains optimal rates of convergence. For simplicity, in this section and the next we focus on unconstrained optimization ($\xset = \R^d$) and assume that $f$ is continuously differentiable, so that $\grad f$ exists. In \Cref{app:gen-framework} we extend our framework to general closed and convex domains and non-differentiable convex objectives. 

The key object in both the original MS algorithm and our new variant is an oracle $\oracle$ that approximately minimizes a local model of $f$ at a query point $y$. In particular, $\oracle$ satisfies the following approximation error bound, adapted from \citet[eq.~(3.3)]{MonteiroS13a} ($\lambda$ in~\cite{MonteiroS13a} is $1/\lambda$ in our notation).
\begin{definition}[MS oracle]\label{def:ms-condition}
	An oracle $\MSoracle: \R^d\times \R_+ \to \R^d \times \R_+$ is a \emph{$\sigma$-MS oracle} for function $f:\notarxiv{\R^d}\arxiv{\R^d}\to\R$ if for every $y\in\R^d$  and $\blambda>0$, the points $(x,\lambda)=\MSoracle(y;\blambda)$ satisfy
	\begin{equation}\label{eq:ms-condition}
		\norm*{ x - \prn*{y-\tfrac{1}{\lambda}\grad f(x)} } \le \sigma \norm{x-y}.
	\end{equation}
\end{definition}
\Cref{def:ms-condition} endows the oracle with an additional output $\lambda$ and an additional input $\blambda$. The value of $\lambda$ has the following simple interpretation: any point $x$ satisfying~\eqref{eq:ms-condition} approximately minimizes $F(x') = f(x') + \frac{\lambda}{2}\norm{x' -y}^2$ in the sense that $\norm{\grad F(x)} \le \lambda \sigma \norm{x-y}$. In particular, computing an exact proximal point $x_\lambda  = \argmin_{x'} F(x')$ and outputting $(x,\lambda)$ implements a $0$-MS oracle. 
The input $\blambda$ is optional: oracle implementations in prior work do not require it, but our new adaptive oracles (described in the next section) use it for improved efficiency. In \Cref{app:gen-framework} we provide a slightly more general approximation condition for MS oracles that handles non-smooth objectives and bounded domains, as well as a different, stochastic condition similar to that of~\cite{asi2021stochastic,carmon2022distributionally}.

Let us discuss the key differences between our algorithm (\Cref{alg:optms}) and the stylized MS algorithm (\Cref{alg:ms}).
At every iteration, \Cref{alg:ms} searches for a value $\blambda_{t+1}$ such that $\tx_{t+1}, \lambda_{t+1} = \oracle(y_t; \blambda_{t+1})$ satisfies $\lambda_{t+1} \approx \blambda_{t+1}$ (note that $y_t$ depends on $\blambda_{t+1}$). This is done via a bisection procedure 
iteratively shrinking an interval 
that contains a successful choice of $\blambda_{t+1}$.\footnote{
	\Cref{alg:ms} simplifies the bisection routine of \citet{MonteiroS13a} and implicitly assumes that an initial interval $[\lambda^{\ell}, \lambda^h]$ always contains a valid solution. One can guarantee such an interval exists by selecting very small $\lambda^{\ell}$ and very large $\lambda^{h}$. Alternatively, one may construct a valid initial interval via a bracketing procedure, as we do in the empirical comparison. 
	Either way, the cost  is logarithmic in problem parameters.
}  This bisection process is inefficient in the sense that every time we reach \cref{line:restart-1,line:restart-2} (highlighted in red) all of the optimization progress made by the last oracle call is discarded.

In contrast, even though our algorithm queries $\oracle$ in the same way (with $y_t$ computed based on a guess $\blambda_{t+1}$), it makes use of the oracle output even if $\lambda_{t+1}$ is very far from $\blambda_{t+1}$, thus never discarding progress made by the oracle. Instead of performing a bisection, we  compare $\blambda_{t+1}$ and $\lambda_{t+1}$ to guide our next guess $\blambda_{t+2}$. When $\blambda$ overshoots $\lambda$, we decrease it by a factor $\alpha$ (\cref{line:lam-update-1}, highlighted in green) and set $x_t$ and $A_t$ as in \Cref{alg:ms}. When it undershoots, we multiply it by $\alpha$ (\cref{line:lam-update-2}). In this case, we perform an additional key algorithmic modification which we call the \emph{momentum damping mechanism}: we scale down the growth of the parameter $A_{t+1}$ and replace the next iterate with a convex combination of $x_t$ and $\tx_{t+1}$ to ensure that our overly optimistic guess for $\lambda_{t+1}$ does not destabilize the algorithm.\footnote{It is also possible to set $x_{t+1} = \argmin_{x\in\{\tx_{t+1}, x_t\}} f(x)$ instead of the convex combination in \cref{line:x-update-2} and maintain our theoretical guarantees. } In \Cref{app:experiments-momentum-damping-mechanism} we demonstrate empirically that this mechanism is important for stabilizing \Cref{alg:optms}.  

Different MS oracles attain different rates of convergence when accelerated via the MS framework. In the following definition, we distill a key property that determines this rate.
\begin{definition}[Movement bound]\label{def:movement-bound}
	For $s \ge 1$, $c,\lambda>0$, and $x,y\in\R^d$ we say that  $(x,y,\lambda)$ satisfy a  \emph{$(s,c)$-movement bound} if 
	\begin{equation}\label{eq:movement-bound}
		\norm{x-y} \ge \begin{cases}
			(\lambda / c^{s})^{1/(s-1)} & s<\infty\\
			1/c & s = \infty\,,
		\end{cases}
	\end{equation}
	where a $(1,c)$-movement bound simply means that $\lambda \le c$. 
	\end{definition}
In the next section, we will show how to build MS oracles that, given query $y$, output $(x,\lambda)$ such that $(x,y,\lambda)$ always satisfy a $(s,c)$-movement bound, for certain $s$ and $c$ depending on the oracle type and function structure (e.g., level of smoothness). For example, when $f$ has $H$-Lipschitz Hessian, the cubic-regularized Newton step with $M=2H$ is a $\half$-MS oracle that guarantees a $(2,\sqrt{H})$-movement bound. 
 With the necessary definitions in hand, we are ready to state our main result: the iteration (and MS oracle query) complexity of \Cref{alg:optms}. %

\begin{theorem}\label{thm:main}
	Let $f:\R^d\to \R$ be convex and differentiable, and consider \Cref{alg:optms} with parameters $\alpha>1$, $\blambda>0$, and a $\sigma$-MS oracle (\Cref{def:ms-condition}) for $f$ with  $\sigma\in [0,0.99)$. Let $s\ge 1$ and $c>0$, and suppose that for all $t$ such that $\lambda_{t}>\blambda_{t}$ or $t=1$, the iterates $(\tx_{t}, y_{t-1}, \lambda_{t})$ satisfy a $(s,c)$-movement bound (\Cref{def:movement-bound}). There exist $C_{\alpha,s} = O\prn*{
	\frac{s}{\min\crl*{s,\ln \alpha}} \alpha^{\frac{s+1}{3s+1}} }$ and $K_{\alpha} = O\prn*{\frac{1}{\ln \alpha}\alpha^{1/3} }$ such that\footnote{For a fixed $s\ge 1$, the value of $\alpha$ minimizing our complexity bound is $\alpha^\star = e^{\frac{3s+1}{s+1}}$. In practice, performance is not sensitive to the choice of $\alpha$ (see \Cref{app:experiments-parameter-sensitivity}).}
	for any $\xopt \in \R^d$ and $\epsilon>0$, we have $f(x_T) - f(\xopt) \le \epsilon$ when
	\begin{equation*}
		T \ge \begin{cases}
			C_{\alpha,s} \cdot \prn*{\frac{c^{s} \norm{x_0 - \xopt}^{s+1}}{\epsilon}}^{\frac{2}{3s+1}}  & s< \infty \\[8pt]
			K_\alpha \cdot \prn*{c \norm{x_0 - \xopt}}^{\frac{2}{3}}\log \frac{\lambda_1\norm{x_0 - \xopt}^2}{\epsilon} & s=\infty. 
		\end{cases}
	\end{equation*}
\end{theorem}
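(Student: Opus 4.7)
The plan is to combine a standard Monteiro--Svaiter potential-descent argument with a tailored lower bound on $A_T$ that accommodates the new damping branch and the multiplicative $\blambda$ schedule. Define $\Phi_t \defeq A_t(f(x_t) - f(\xopt)) + \tfrac{1}{2}\norm{v_t - \xopt}^2$. The classical MS analysis yields $\Phi_{t+1} \le \Phi_t$ once (a)~the invariant $a_{t+1}^2 \lambda_{t+1} \le A_{t+1}$ holds and (b)~$x_{t+1}$ is a convex combination of $\tx_{t+1}$ and $x_t$ with weights summing to $1$. In the accept branch both are immediate: $a_{t+1}^2 \lambda_{t+1} = \ba_{t+1}^2 \lambda_{t+1} \le \ba_{t+1}^2 \blambda_{t+1} = \bA_{t+1} = A_{t+1}$ by construction, and $x_{t+1} = \tx_{t+1}$ trivially satisfies~(b). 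In the damp branch, substituting $a_{t+1} = \gamma_{t+1} \ba_{t+1}$ gives $a_{t+1}^2 \lambda_{t+1} = \gamma_{t+1} \bA_{t+1}$ and $A_{t+1} = A_t + \gamma_{t+1} \ba_{t+1}$, so (a)~reduces to the trivial $\gamma_{t+1} A_t \le A_t$; a direct computation shows the weights $\tfrac{(1-\gamma_{t+1})A_t}{A_{t+1}}$ and $\tfrac{\gamma_{t+1} \bA_{t+1}}{A_{t+1}}$ sum to $1$, giving~(b). Hence $\Phi_T \le \Phi_0 = \tfrac{1}{2}\norm{x_0-\xopt}^2$ and it suffices to lower bound $A_T$.

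For the lower bound I would use the MS condition $\norm{\grad f(\tx_{t+1})} \ge (1-\sigma)\lambda_{t+1}\norm{\tx_{t+1} - y_t}$, the $v_t$ update, and the potential bound $\norm{v_t - \xopt} \le \norm{x_0 - \xopt}$ obtained above, which together (after telescoping $\norm{v_t - \xopt}^2$ against convexity of $f$) yield $\sum_t a_{t+1}^2 \lambda_{t+1}^2 \norm{\tx_{t+1} - y_t}^2 \lesssim \norm{x_0 - \xopt}^2$. For $s<\infty$, substituting the $(s,c)$-movement bound converts this to $\sum_t a_{t+1}^2 \lambda_{t+1}^{2s/(s-1)} \lesssim c^{2s/(s-1)}\norm{x_0-\xopt}^2$. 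Combined with the recurrence $a_{t+1}^2 \lambda_{t+1} \asymp A_{t+1}$ and a H\"older-type manipulation of the $a_{t+1}$ summation, one would obtain the polynomial lower bound $A_T \gtrsim T^{(3s+1)/2} \cdot (c^s\norm{x_0-\xopt}^{s+1})^{-2/(3s+1)}$, delivering the claimed complexity after solving for $T$.

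The main obstacle will be that the $(s,c)$-movement bound is assumed only at damp steps ($\lambda_t > \blambda_t$) and at $t=1$, not on every iteration. I would bridge this gap by exploiting the multiplicative schedule: each damp multiplies $\blambda$ by $\alpha$ and each accept divides it by $\alpha$, so the counts of the two step types up to any time differ by at most $\log_\alpha(\blambda_T/\blambda_1)$, and damp steps therefore constitute a constant fraction of iterations (with the $\log_\alpha$ additive deficit absorbed into $C_{\alpha,s}$). Moreover, because $\blambda$ changes only by factors of $\alpha$, any accept step has, within $O(\log_\alpha(\cdot))$ iterations, a damp ``neighbour'' with comparable $\blambda$ on which the movement bound applies, letting the structural information propagate to all iterations via short telescoping windows. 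Optimizing the resulting trade-off in $\alpha$ (large $\alpha$ means fewer adjustments per misaligned $\blambda$ but larger damp overshoot) is responsible for the factor $\alpha^{(s+1)/(3s+1)}$ in $C_{\alpha,s}$. The $s=\infty$ case requires a minor variant: the movement bound $\norm{\tx-y} \ge 1/c$ is $\lambda$-independent, so the analogous summation becomes $\sum a_{t+1}^2 \lambda_{t+1}^2 \lesssim c^2 \norm{x_0-\xopt}^2$, and instead of a polynomial rate one obtains geometric growth $A_{t+1}/A_t \ge 1 + \Omega(1)$ once $t \gtrsim (c\norm{x_0-\xopt})^{2/3}$, producing the $\log(\lambda_1\norm{x_0-\xopt}^2/\epsilon)$ factor.
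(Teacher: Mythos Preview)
Your potential argument is right and matches the paper's Proposition~1: the damping branch does preserve $a_{t+1}^2\lambda_{t+1}\le A_{t+1}$ and the convex-combination structure, so $\Phi_T\le\Phi_0$ and (with a little more bookkeeping than you sketched) one also extracts the movement-sum bound $\sum_{t\in\SsupT\cup\{1\}}\bA_t\blambda_t\norm{\tx_t-y_{t-1}}^2=O(D_0)$, equivalent to your $\sum a_{t+1}^2\lambda_{t+1}^2\norm{\tx_{t+1}-y_t}^2\lesssim\norm{x_0-\xopt}^2$.

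The growth-rate half has a genuine gap. Two of your load-bearing claims fail. First, ``damp steps constitute a constant fraction of iterations with the $\log_\alpha$ deficit absorbed into $C_{\alpha,s}$'' requires $|\log_\alpha(\blambda_T/\blambda_1)|=O(1)$, but nothing bounds the drift of $\blambda_T$. Second, and more seriously, ``a damp neighbour with comparable $\blambda$ lets the structural information propagate'' is a non sequitur: the movement bound is a lower bound on $\norm{\tx_t-y_{t-1}}$ at the damp step; knowing that a nearby accept step shares a similar $\blambda$ tells you nothing about $\norm{\tx-y}$ there. Relatedly, the recurrence $a_{t+1}^2\lambda_{t+1}\asymp A_{t+1}$ you invoke breaks at damp steps, where $a_{t+1}^2\lambda_{t+1}=\gamma_{t+1}\bA_{t+1}$ can be arbitrarily smaller than $A_{t+1}$. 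The heart of the difficulty is that the growth bound $\sqrt{A_T}\gtrsim\sum 1/\sqrt{\blambda_t}$ comes from \emph{accept} (down) steps, while the movement-sum constraint lives on \emph{damp} (up) steps; your ``constant fraction'' heuristic does not bridge these disjoint index sets.

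The paper's mechanism for this bridge is an exponential crediting scheme you are missing. It identifies the ``up--down'' transition iterates $\QT\subseteq\SsupT\cup\{1\}$ and shows $\sqrt{A_T}\ge\Omega(1)\sum_{t\in\QT}\sqrt{\alpha^{r_t}/\blambda_t}$ for weights $r_t\ge0$ with $\sum_{t\in\QT}r_t=(T-1)/2$: a run of consecutive down steps shrinks $\blambda$ geometrically, so the adjacent up iterate's $1/\sqrt{\blambda_t}$ is effectively inflated by $\alpha^{r_t/2}$ when re-expressed through the down-step sum. After the elementary linearization $\alpha^{r_t-1}\ge\min(1,\ln\alpha)\cdot r_t$, a reverse H\"older against the up-step movement sum gives a recursion $A_{\widehat T}^{k}\ge\Omega(D_0^{-k'})\sum_{t\in\QTS}A_t^{k'}r_t$, and a separate growth lemma (non-trivial, proved by an integral comparison) shows any such sequence satisfies $A_T\gtrsim(\sum r_t)^{(3s+1)/2}$. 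This chain---exponential crediting, linearization, reverse H\"older, recursive growth lemma---is the actual content of the lower bound, and none of it is captured by ``short telescoping windows''.
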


\paragraph{Proof sketch.}
The remainder of this section is an overview of the proof of \Cref{thm:main}, which we provide in full in~\Cref{apdx:framework}. To simplify this proof sketch, we treat $\alpha$, $c$, and $1/(1-\sigma)$ as $O(1)$, and focus on $s<\infty$. To highlight the novel aspects of the proof, let us first briefly recall the analysis of \Cref{alg:ms}~\cite{MonteiroS13a,gasnikov2019optimal, bubeck2019optimal, jiang2019optimal,carmon2020acceleration}. For every $t \le T$ let 
\begin{equation*}
	E_t \defeq f(x_t) - f(\xopt)~~,~~D_t \defeq \half\norm{v_t - \xopt}^2
	~~\mbox{and}~~M_t = \half \norm{\tx_{t} - y_{t-1}}^2.
\end{equation*}
The key facts about the standard MS iterations are
\begin{equation}\label{eq:standard-ms-facts}
	E_T \le \frac{D_0}{A_T}~~,~~\sum_{t\in[T]} \lambda_{t} A_{t} M_{t} \le O(D_0)
	~~\mbox{and}~~
	\sqrt{A_T} \ge \Omega(1)\sum_{t \in [T]} \frac{1}{\sqrt{\blambda_t}}.
\end{equation}
The first fact implies that the optimality gap at iteration $T$ is inversely proportional to $A_T$, while the latter two facts imply that $A_T$ grows rapidly. 
More specifically, substituting the movement bound $M_t \ge \Omega\prn*{(\lambda_t)^{2/(s-1)}}$ and $\blambda_t \ge \Omega(\lambda_t)$ (thanks to the bisection) yields $\sum_{t\in[T]} {\blambda_{t}}^{\frac{s+1}{s-1}} A_{t} = O(D_0)$. 
Combining this with the third fact in~\eqref{eq:standard-ms-facts} and using the reverse \Holder inequality allows one to conclude that, for $k=\frac{s+1}{3s+1}$ and $k' = \frac{s-1}{3s+1}$, we have
$	A_T^{k} \ge \Omega(D_0^{-k'}) \sum_{t\in [T]} A_i^{k'}$,
which, upon further algebraic manipulation, yields $A_T \ge \Omega(T^{(3s+1)/2}D_0^{-(s-1)/2})$. Plugging this back to to the first fact in~\eqref{eq:standard-ms-facts} gives the claimed convergence rate.

Having described the standard MS analysis, we move on to our algorithm.
Our \emph{first challenge} is re-establishing the facts~\eqref{eq:standard-ms-facts}. The difficult case is $\lambda_t > \blambda_t$, where the standard cancellation that occurs in the MS analysis may fail. This is where the momentum damping mechanism (\cref{line:a-update-2,line:x-update-2} of our algorithm) comes into play, allowing us to show that (See \Cref{prop:main-potential} in the appendix)
\begin{equation}\label{eq:opt-ms-facts}
	E_T \le \frac{D_0}{A_T}~~,~~\sum_{t\in \SsupT\cup\{1\}} \blambda_{t} A_{t} M_{t} \le O(D_0)
	~~\mbox{and}~~
	\sqrt{A_T} \ge \Omega(1)\sum_{t \in \SdownT} \frac{1}{\sqrt{\blambda_t}},
\end{equation}
where  $\SupT\defeq\{t\in[T] \mid \lambda_{t}\geq\blambda_{t}\}$ and 
 $\SsupT$, $\SdownT$ , $\SsdownT$ and $\SequalT$ are analogously defined.

Comparing~\eqref{eq:standard-ms-facts} and~\eqref{eq:opt-ms-facts}, the price of removing the bisection becomes evident: at each iteration (except the first) only one of the terms forcing the growth of $A_t$ receives a contribution. The \emph{second challenge} of our proof is establishing a lower bound on $\sqrt{A_T}$ in terms of the $1/\sqrt{\blambda_t}$ values for $t\in\SsupT\cup\{1\}$, where the movement bound holds for $M_t$. This is where the multiplicative $\blambda$ update rule (\cref{line:lam-update-1,line:lam-update-2} of the algorithm) comes into play: it allows us to ``credit'' the contribution of every ``down iterate'' (in $\SdownT$) to an adjacent ``up'' iterate ($\SsupT \cup \{1\}$) and furthermore argue that the contribution gets an exponential bonus based on the distance  between the two. Consequently, we are able to identify a set $\QT \subseteq \SsupT \cup \{1\}$ of iterates, and a sequence $\{r_t\}$ such that (see \Cref{lem:increase})
$	\sqrt{A_{T}} \geq \Omega(1)\sum_{t\in \QT}\sqrt{\frac{\alpha^{r_{t}-1}}{\blambda_{t}}}~~\mbox{and}~~
	\sum_{t\in[T]}r_t = \frac{T-1}{2}.
$

Repeating the reverse \Holder argument of prior work, we obtain the recursive bound
\begin{equation}
\label{eq:sketch-holder-us}
	A_T^{k} \ge \Omega(D_0^{-k'}) \sum_{t\in \QT} A_t^{k'} \alpha^{k r_t} \ge 
	\Omega(D_0^{-k'}) \sum_{t\in \QT} A_t^{k'} r_t
\end{equation}
with  $k=\frac{s+1}{3s+1}$ and $k' = \frac{s-1}{3s+1}$  as before. 
The \emph{final challenge} of our proof is to show that such recursion implies sufficient growth of $A_t$. This is where careful algebra comes into play; we show that \eqref{eq:sketch-holder-us} implies that $A_T \ge \Omega\prn*{( \sum_{t\in[T]} r_t )^{(3s+1)/2} D_0^{-(s-1)/2}}$ (see \Cref{lem:growth-rate-high,lem:growth-rate-ball}) which establishes our result since $\sum_{t\in[T]}r_t = \frac{T-1}{2}$. 

\section{MS oracle implementations}\label{sec:solvers}

In this section we describe several oracles that satisfy both \Cref{def:ms-condition} (the MS condition) and \Cref{def:movement-bound} (movement bounds) and may therefore be used by \Cref{alg:optms}. \Cref{ssec:solvers-prior} briefly reviews oracles that have appeared in prior work, while \Cref{ssec:solvers-amsn} and \Cref{ssec:solvers-amsn-fo} describe our new adaptive oracle implementations. We summarize the key oracle properties and resulting complexity bounds in \Cref{table:summary}.

\subsection{Oracles from prior work}\label{ssec:solvers-prior}

Here we consider several previously-studied oracles of the form $(x,\lambda) = \oracle(y)$, where we omit the second argument $\blambda$ since prior work does not leverage it to improve implementation efficiency.   

\paragraph{Gradient descent step \cite[e.g.,][]{nesterov2018lectures}.} As a gentle start, consider the oracle $\oracle[gd](y) = (y - \eta \grad f(y), \frac{1}{\eta})$, i.e., an oracle that returns $x$ by taking standard gradient step with size $\eta$ and $\lambda=1/\eta$. Obviously, the oracle always satisfies a $(1,\eta^{-1})$-movement bound. Moreover, if we assume that $\grad f$ is $L$-Lipschitz, then $\norm{x-(y-\frac{1}{\lambda}\grad f(x))} = \eta \norm{\grad f(x)-\grad f(y)} \le \eta L \norm{x-y}$. Therefore, when $\eta^{-1} \ge L/\sigma$ the oracle is a $\sigma$-MS oracle. 

\begin{table}[t]
   \notarxiv{\hspace{-1.5em}}
	\centering
	\renewcommand{\arraystretch}{1.75}
	\begin{tabular}{@{}p{3.65cm}p{1.3cm}ll@{}}
		\toprule
		Assumption                                     & Oracle           & Complexity with \Cref{alg:optms}                                                       & Lower bound                                            \\ \midrule
		$\grad^p f$ is $(1,\nu)$-\Holder$^*$               & $\oracleP$       & $O\prn*{\epsilon^{-\frac{2}{3(p+\nu)-2}}}$ evals of $\grad^p f$                        & $\Omega\prn*{\epsilon^{-\frac{2}{3(p+\nu)-2}}}$ \cite{arjevani2019oracle,grapiglia2020tensor}  \\
		$\grad^3 f$ is $1$-Ljpschitz               & $\oracle[$3$-reg-so]$       & $O\prn*{\epsilon^{-\frac{1}{5}}}$ Hessian evals                        & $\Omega\prn*{\epsilon^{-\frac{1}{5}}}$ \cite{arjevani2019oracle,grapiglia2020tensor}  \\
		N/A                                               & $\oracleBall$    & $O\prn*{r^{-\frac{2}{3}} \log \frac{1}{\epsilon} }$ oracle calls                               & $\Omega\prn*{r^{-\frac{2}{3}} }$ \cite{carmon2020acceleration} \\
		Stable Hessian                        & $\oracleBaCoN$ & $O\prn*{r^{-\frac{2}{3}} \log \frac{1}{\epsilon} }$ Hessian evals                              & \multicolumn{1}{c}{-}                                                        \\ \midrule
		\multirow[t]{2}{=}{$\hess f$ is $(1,\nu)$-\Holder$^\dagger$}                 & \multirow[t]{2}{=}{$\adaOracle$ (Alg.~\ref{alg:adaptive-msn-step})}    & $O\prn*{\epsilon^{-\frac{2}{4+3\nu}}}$ Hessian evals                                   & $\Omega\prn*{\epsilon^{-\frac{2}{4+3\nu}}}$ \cite{arjevani2019oracle,grapiglia2020tensor}      \\
		&                  & $O\prn*{\epsilon^{-\frac{2}{4+3\nu}}} + \Otil{1}$ linear systems                 & \multicolumn{1}{c}{-}                                                        \\
		\multirow[t]{2}{=}{$\grad f$ is $\ell$-Lipschitz and $\hess f$ is $(1,\nu)$-\Holder$^\dagger$} & \multirow[t]{2}{=}{$\foadaOracle$ (Alg.~\ref{alg:adaptive-msn-cg-step})}   & $O\prn*{(\frac{\epsilon}{\ell})^{-\half}} + \Otil{1}$ first-order evals                        & $\Omega\prn*{(\frac{\epsilon}{\ell})^{-\half}}$ \cite{nesterov2018lectures}          \\
		&                  & $O\prn*{\min\crl*{(\frac{\epsilon}{\ell})^{-\half}, \epsilon^{-\frac{2}{4+3\nu}}}}$ iterations &      \multicolumn{1}{c}{-}                                                           \\ \bottomrule
	\end{tabular}
	\caption{\label{table:summary}Complexity bounds for finding $x$ such that $f(x)-f(\xopt)\le\epsilon$ assuming $\norm{x-\xopt}\le 1$, attained by MS oracles from the literature (top 4 rows, described in \Cref{ssec:solvers-prior}) and oracles we develop (bottom two rows). In all cases we improve on prior work by a logarithmic factor. $^*$We require $p+\nu\ge2$. $^\dagger$Our adaptive oracles do not require knowledge of continuity constants or even the \Holder order $\nu\in[0,1]$.}
\end{table}

\paragraph{Taylor descent step~\cite{baes2009estimate,nesterov2021implementable,gasnikov2019optimal, bubeck2019optimal, jiang2019optimal, song2021unified}.}
Generalizing both $\oracle[gd]$ and the cubic-regularized Newton step oracle $\oracle[cr]$, we define for every integer $p\ge1$ and $\nu\in[0,1]$ the oracle $\oracleP$, that, for parameter $C$ and input $y$ returns $(x,\lambda) = \oracleP(y)$ where
\begin{equation}\label{eq:oracle-p-reg}
	x = \argmin_{x'\in\R^d} \crl*{ \tilde{f}_p(x';y) + \frac{M}{p!(p+\nu)}\norm{x'-y}^{p+\nu}}
	~~\mbox{,}~~\lambda=\frac{M}{p!}\norm{x-y}^{p+\nu-2}
\end{equation}
and $\tilde{f}_p(x;y) \defeq \sum_{i=0}^p \frac{1}{i!}\grad^i f(y)[(x-y)^{\otimes i}]$ is the Taylor expansion of $f$ around $y$ evaluated at $x$. 
Oracles $\oracle[gd]$ and $\oracle[cr]$ correspond to the special cases $\oracleP[1,1]$ (with $\eta = M^{-1}$) and $\oracleP[2,1]$, respectively. 
Clearly, by definition, the oracle always satisfies a $(p+\nu-1, (M/p!)^{1/(p+\nu-1)})$-movement bound. Moreover, it is easy to show that 
\begin{equation*}
	\norm*{x-\prn*{y-\frac{1}{\lambda}\grad f(x)}} = \frac{1}{\lambda}\norm{ \grad f(x) - \grad \tilde{f}_p(x;y) } = \frac{p!}{M} \frac{\norm{\grad f(x) - \grad \tilde{f}_p(x;y)}}{\norm{x-y}^{p+\nu-2}}.
\end{equation*}

For any $p\ge1$ and $\nu\in[0,1]$ we say that
\begin{equation*}
	\mbox{$\grad^p f$ is $(H,\nu)$-\Holder if for all $x,y$ we have $\opnorm{\grad^p f(x) - \grad^p f(y)} \le H\norm{x-y}^\nu$}.
\end{equation*}
(An $(H,1)$-\Holder derivative is $H$-Lipschitz.)
If $\grad^p f$ is $(H,\nu)$-\Holder, Taylor's theorem gives $\norm{\grad f(x) - \grad \tilde{f}_p(x;y)} \le \frac{H}{p!}\norm{x-y}^{p+\nu-1}$~\cite[Lemma 2.5]{song2021unified}, and so $\norm{x-(y-\frac{1}{\lambda}\grad f(x))} \le \frac{H}{M} \norm{x-y}$. Therefore, when $M\ge H / \sigma$ the oracle is a $\sigma$-MS oracle.  

\paragraph{Exploiting third-order smoothness with a second order oracle~\cite{nesterov2021superfast,kamzolov2020near}.}
For $p > 2$, computing $\oracleP$ is typically intractable due to the need to compute the high-order derivative tensors $\grad^3 f(y), \grad^4 f(y), \ldots, \grad^p f(y)$. Nevertheless for $p=3$ \citet{nesterov2021superfast} designs an approximate solver for~\eqref{eq:oracle-p-reg}, which we denote $\oracle[$3$-reg-so]$, using only $\hess f(y)$ and a logarithmic number of gradient evaluations. When $\grad^3 f$ is $(L_3, 1)$-\Holder, \cite{nesterov2021superfast} shows that $\oracle[$3$-reg-so]$ is a valid MS-oracle satisfying a $(3, O(L_3))$-movement bound, on par with the movement bound of $\oracleP[3,1]$. 

\paragraph{Exact ball optimization oracle \cite{carmon2020acceleration}.} For a given query $y$, consider the exact minimizer of $f$ constrained to a ball of radius $r$ around $y$, i.e., consider an oracle $\oracleBall$ such that $(x,\lambda) = \oracleBall(y)$ satisfy
$	x \in \argmin_{x':\norm{x'-y}\le r} f(x')$ and $\lambda = \frac{\norm{\grad f(x)}}{\norm{x-y}}$.
One may easily verify that (unless $\lambda = \norm{\grad f(x)}=0$) we have $x = y - \frac{1}{\lambda}\grad f(x)$, and therefore the oracle is a $0$-MS oracle. Moreover, when $f$ is convex, we have either $\norm{x-y}=r$ or $x$ is a global minimizer of $f$, and so we may assume without loss of generality that the oracle satisfies an $(\infty, 1/r)$ movement bound.

\paragraph{Ball-Constrained Newton (BaCoN) oracle \cite{carmon2020acceleration}.} Exactly implementing $\oracleBall$ is generally intractable. Nevertheless, \citet[Alg.~3]{carmon2020acceleration} describe a method $\oracleBaCoN$ based on solving a sequence of $\Otil{1}$ trust-region problems (ball-constrained Newton steps), which we call that, for functions that are $O(1)$-Hessian stable in a ball of radius $r$ (or $1/r$-quasi-self-concordant) and have a finite condition number, outputs $(x,\lambda)$ satisfying the $\half$-MS oracle condition and an $(\infty, O(1/r))$-movement bound. Implementing  $\oracleBaCoN$ requires only a single Hessian evaluation and a number of linear system solutions that is polylogarithmic in problem parameters.
Subsequent works implementing ball oracles~\cite{carmon2021thinking,asi2021stochastic,carmon2022distributionally} satisfy an approximation guarantee different than the MS condition, similar to the one we describe in \Cref{app:gen-framework}.

\subsection{An adaptive Monteiro-Svaiter-Newton oracle}\label{ssec:solvers-amsn}

The oracle implementations in \Cref{ssec:solvers-prior} satisfy movement bounds by design and the MS condition~\eqref{eq:ms-condition} by assumption. For example, the cubic-regularized Newton step oracle $\oracle[cr]$ is guaranteed to satisfy 
the MS condition only when the regularization parameter $M$ is sufficiently larger than the Lipschitz constant of $\hess f$. This suggests that $M$ must be carefully tuned to ensure good performance. Prior work attempt to dynamically adjust $M$ in order to meet certain approximation conditions~\cite{cartis2011adaptive,gould2012updating,grapiglia2020tensor,jiang2020unified}.  However, even computing a single cubic-regularized Newton step entails searching for $\lambda$ that satisfies $\norm{ [\hess f(y) + \lambda I]^{-1} \grad f(y) } = \frac{M\lambda}{2}$. Therefore, such a search over $M$ is essentially a (potentially) redundant double search over $\lambda$. 

We propose a more direct and more adaptive MS oracle recipe: \emph{search for the smallest $\lambda$ for which the regularized Newton step $x=y-[\hess f(y) + \lambda I]^{-1} \grad f(y)$ satisfies the MS condition~\eqref{eq:ms-condition}}.\footnote{The prior works  \cite{mischenko2021regularized,doikov2021gradient} also directly consider  quadratically-regularized Newton steps, but employ approximation conditions other than~\eqref{eq:ms-condition} to select the parameter $\lambda$.} This yields valid MS oracle by construction, independently of any assumption. Moreover, it is simple to argue
that when  $\hess f$ is $(H,\nu)$-\Holder continuous for some $\nu\in[0,1]$, such oracle would guarantee the same movement bound as $\oracleP[2,\nu]$ with the best choice parameters $M$ and $\nu$ (see \Cref{apdx:solvers-ideal})---even though our recipe requires neither of these parameters!

\begin{figure}[h!]
	\begin{minipage}[t]{0.50\linewidth}%
		\setcounter{algocf}{1}
		\begin{algorithm}[H]
			\DontPrintSemicolon
			\caption{$\adaOracle$}\label{alg:adaptive-msn-step}
			\KwInput{Query $y\in\R^d$, $\blambda >0$. Flag $\lazyflag$.}
			\KwParameters{MS factor $\sigma \in (0,1)$.}
			
			\If{$\mscheck(\blambda;y,\sigma)$\label{line:amsn-first-check}}{
				\lIf{$\lazyflag$}{\Return{$y - [\hess f(y) + \blambda I]^{-1} \grad f(y)$, $\blambda$}}\Else{
				$\lamvalid \gets \blambda$~~,~~
				$ k \gets 0 $\;
				\While{$\mscheck(\lamvalid / 2^{2^{k}};y,\sigma)$\label{line:amsn-decrease-loop}}{
					$\lamvalid \gets \lamvalid / 2^{2^{k}}$\;
					$k\gets k +1$\;
				}
				$k^\star \gets k$~~,~~$\laminvalid \gets \lamvalid / 2^{2^{k^\star}}$
			}}
			\Else{
				$\laminvalid \gets \blambda$~~,~~$ k \gets 0 $\;
				\While{\textbf{\textup{not}} $\mscheck(\laminvalid 2^{2^{k}};y,\sigma)$\label{line:amsn-increase-loop}}{
					$\laminvalid \gets \laminvalid  2^{2^{k}}$\;
					$k\gets k +1$\;
				}
				$k^\star \gets k$~~,~~$\lamvalid \gets \laminvalid  2^{2^{k^\star}}$
			}

			\While(\label{line:amsn-bisection-loop}){$\laminvalid < \lamvalid / 2$}{
				$\lambda \gets \sqrt{\laminvalid \lamvalid}$\;
				\lIf{$\mscheck(\lambda;y,\sigma)$}{
					$\lamvalid \gets \lambda$
				}
				\lElse{
					$\laminvalid \gets \lambda$
				}
			}
			
			\Return{$y - [\hess f(y) + \lamvalid I]^{-1} \grad f(y)$, $\lamvalid$}
			
			\vspace{\baselineskip}
			
			\Fn{$\mscheck(\lambda;y,\sigma)$}{
				$x = y - [\hess f(y) + \lambda I]^{-1} \grad f(y)$\;
				\lIf{$\norm*{x - (y-\frac{1}{\lambda}\grad f(x))} \le \sigma \norm{x-y}$}{\Return{\textup{True}}}\lElse{\Return{\textup{False}}}
			}
		\end{algorithm}
	\end{minipage}\hfill
	\begin{minipage}[t]{0.48\linewidth}%
		\begin{algorithm}[H]
			\DontPrintSemicolon
			\caption{$\foadaOracle$}\label{alg:adaptive-msn-cg-step}
			\KwInput{$y\in\R^d$, $\blambda >0$. Flag $\lazyflag$.}
			\KwParameters{MS factor $\sigma \in (0,1)$.}
			
			$\lambda \gets \blambda$~~,~~$\failedbefore\gets$ False\;
			\Repeat{
				$A \gets \hess f(y) + \lambda I$~,~$b \gets - \grad f(y)$\;
				\Comment{Apply MinRes/Conjugate Residuals~\cite{fong2012cg} %
					until obtaining $w$ s.t.\ $\norm{A w-b} \le \frac{\lambda\sigma}{2}\norm{w}$}
			
				$x\gets y + \minresnewton(A,b,\lambda\sigma)$\;

				\If{
					$\norm*{x - (y-\frac{1}{\lambda}\grad f(x))} \le \sigma \norm{x-y}$
					\label{line:newton-cr-ms-check}
				}{
					\If{$\lazyflag$ \textup{or} $\failedbefore$}{
						\Return{$x$, $\lambda$}}
					\lElse{$\lambda \gets \lambda /2$%
					}%
				}\Else{
					$\failedbefore\gets $ True\;
				    $\lambda \gets 2\lambda$~~~%
				    \;
				}
			}
		
			\vspace{\baselineskip}
		
		\Fn{$\minresnewton(A,b,\lambda\sigma)$}{
			$w_0 \gets 0$\;
			$p_0 \gets r_0\gets Aw_0 -b$\Comment*{$r_i = Aw_i - b$}
			$s_0\gets q_0 \gets Ar_0$\Comment*{$q_i = Ap_i$}
			$i\gets 0$\;
			\While{$\norm{r_i} > \frac{\lambda\sigma}{2}\norm{w_i}$\label{line:amsnfo-while}}
			{
				$w_{i+1} \gets w_{i} - \frac{\inner{r_i}{s_i}}{\norm{q_i}^2} p_i$\;
				$r_{i+1} \gets r_{i} - \frac{\inner{r_i}{s_i}}{\norm{q_i}^2} q_i$\;
				$s_{i+1} \gets A r_{i+1}$\;
				$p_{i+1} \gets \frac{\inner{r_{i+1}}{s_{i+1}}}{\inner{r_i}{s_i}} p_i + r_{i+1}$\;
				$q_{i+1} \gets \frac{\inner{r_{i+1}}{s_{i+1}}}{\inner{r_i}{s_i}} q_i + s_{i+1}$\;
				$i\gets i+1$\;
			}

			\Return{$w_i$}
		}
		\end{algorithm}
	\end{minipage}
\vspace{-16pt}
\end{figure} 
Exactly fulfilling this recipe, i.e., finding the ideal minimal $\lambda^\star$ that satisfies the MS condition, is difficult. Fortunately, to adaptively guarantee movement bounds, it suffices to find a value $\lambda$ such the corresponding regularized Newton step satisfies the MS condition, while the step corresponding to $\lambda/2$ does not; 
\Cref{alg:adaptive-msn-step} finds  precisely such a $\lambda$. 

Let us describe the operation of \Cref{alg:adaptive-msn-step}. If the input $\blambda$ is invalid (i.e., its corresponding regularized Newton step does not satisfy the MS condition so that $\mscheck(\blambda;y,\sigma)$ evaluates to False), we set $\laminvalid\gets \blambda$ and test a double-exponentially increasing series of $\lambda$'s, until reaching a valid $\lamvalid$ (\cref{line:amsn-increase-loop}). If $\blambda$ is valid and the $\lazyflag$ flag is set, we return it immediately. Otherwise (if $\lazyflag$ is not set)
we set $\lamvalid=\blambda$ and decrease it at a double-exponential rate until finding an invalid $\laminvalid$ (\cref{line:amsn-decrease-loop}). In either case (so long as $\lazyflag$ is not set) we obtain an (invalid,valid) pair $(\laminvalid, \lamvalid)$ such that $\lamvalid / \laminvalid = 2^{2^{k^\star}}$ at the cost of $2+k^\star$ linear system solutions. We then perform precisely $k^\star$ log-scale bisection steps in order to shrink $\lamvalid / \laminvalid$ down to 2 while maintaining the invariant that $\lamvalid$ is valid and $\laminvalid$ is invalid   (\cref{line:amsn-bisection-loop}). 

The following theorem bounds the complexity  of \Cref{alg:adaptive-msn-step} in terms of linear-system solution number, and establishes a movement bound for its output assuming that $\hess f$ is locally \Holder around the query point. We defer the proof of the theorem and its following corollary to \Cref{apdx:solvers-amsn}.

\begin{restatable}{theorem}{restateThmAmsn}\label{thm:amsn}
	\Cref{alg:adaptive-msn-step} with parameter $\sigma$ is a $\sigma$-MS oracle $\adaOracle$. For any $y\in\R^d$, computing $(x,\lambda)=\adaOracle(y)$ requires at most $2 +2 \log_2\prn*{1+\abs*{\log_2 \frac{\lambda}{\blambda}}}$ linear systems solutions. If $\lazyflag$ is False
	or $\lambda > \blambda$, and if $\hess f$ is $(H,\nu)$-\Holder in a ball of radius $2\norm{x-y}$ around $y$, then $(x,y,\lambda)$ satisfy a
	 $\prn*{1+\nu, (2H/\sigma)^{1/(1+\nu)}/\sigma}$-movement bound.
\end{restatable}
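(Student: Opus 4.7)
The plan is to verify the three claims of the theorem in sequence, each using a distinct idea.

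\textbf{MS oracle property.} \Cref{alg:adaptive-msn-step} returns $(x,\lambda)$ only when $\mscheck(\lambda;y,\sigma)$ has evaluated to True. By the definition of $\mscheck$, this is exactly the $\sigma$-MS condition~\eqref{eq:ms-condition}, so the algorithm is a $\sigma$-MS oracle unconditionally.

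\textbf{Complexity.} I would count $\mscheck$ invocations, each of which costs one linear-system solve. The initial check contributes $1$. The search-phase while-loop (either the increasing or decreasing branch) performs exactly one $\mscheck$ per value of $k$ and terminates with $k=k^\star$, contributing $k^\star+1$ calls. The bisection then shrinks the $\log_2$-ratio between $\lamvalid$ and $\laminvalid$ from $2^{k^\star}$ to $1$ by repeated halving, which takes precisely $k^\star$ iterations (and in particular zero iterations when $k^\star=0$). This totals $2+2k^\star$ linear systems. To convert this to the form in the theorem, I note that throughout the bisection $\lamvalid$ stays valid and $\laminvalid$ stays invalid (by construction of the if/else), so the final output $\lambda=\lamvalid$ lies on the ``far'' side of $\blambda$ inside the post-search bracket, giving $|\log_2(\lambda/\blambda)|\ge 2^{k^\star}-1$. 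Rearranging yields $k^\star\le \log_2(1+|\log_2(\lambda/\blambda)|)$.

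\textbf{Movement bound.} Under the hypothesis that $\lazyflag$ is False or $\lambda>\blambda$, the algorithm necessarily traverses the full search$+$bisection pipeline, producing a value $\laminvalid$ with $\lambda/2\le\laminvalid<\lambda$ for which $\mscheck(\laminvalid;y,\sigma)$ returned False. Let $x'=y-[\hess f(y)+\laminvalid I]^{-1}\grad f(y)$. From the Newton identity $\grad f(y)+[\hess f(y)+\laminvalid I](x'-y)=0$, a direct computation gives
\begin{equation*}
x'-y+\tfrac{1}{\laminvalid}\grad f(x')=\tfrac{1}{\laminvalid}\bigl(\grad f(x')-\grad f(y)-\hess f(y)(x'-y)\bigr).
\end{equation*}
By Taylor's theorem and the $(H,\nu)$-\Holder assumption on $\hess f$ (applied on the segment from $y$ to $x'$, see below), the right-hand side has norm at most $\frac{H}{\laminvalid(1+\nu)}\|x'-y\|^{1+\nu}$. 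Failure of the MS check at $\laminvalid$ means the left-hand norm exceeds $\sigma\|x'-y\|$, yielding the key inequality $\laminvalid<\frac{H}{\sigma(1+\nu)}\|x'-y\|^{\nu}$.

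To pass from $\|x'-y\|$ to $\|x-y\|$, I would use a spectral decomposition of $\hess f(y)$ to prove that the parametric family $x_\mu=y-[\hess f(y)+\mu I]^{-1}\grad f(y)$ satisfies $\|x_{\mu_1}-y\|/\|x_{\mu_2}-y\|\le \mu_2/\mu_1$ whenever $\mu_1\le\mu_2$. Applied with $\mu_1=\laminvalid$ and $\mu_2=\lambda\le 2\laminvalid$, this gives $\|x'-y\|\le 2\|x-y\|$, which simultaneously certifies that $x'$ lies in the assumed \Holder ball of radius $2\|x-y\|$ around $y$. Combining, $\lambda\le 2\laminvalid\le \frac{2^{1+\nu}H}{\sigma(1+\nu)}\|x-y\|^{\nu}\le \frac{2H}{\sigma^{2+\nu}}\|x-y\|^{\nu}$ for $\sigma\in(0,1)$ and $\nu\in[0,1]$ (the last step uses $\tfrac{2^{1+\nu}}{1+\nu}\le 2\le \tfrac{2}{\sigma^{1+\nu}}$). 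Rearranging this as $\|x-y\|\ge (\lambda/c^{1+\nu})^{1/\nu}$ with $c=(2H/\sigma)^{1/(1+\nu)}/\sigma$ yields the stated $(1+\nu,c)$-movement bound.

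The main obstacle I anticipate is the bookkeeping: justifying the loop invariants (validity of $\lamvalid$ and invalidity of $\laminvalid$) across all four code paths (lazy/non-lazy $\times$ valid/invalid initial $\blambda$), handling the $k^\star=0$ corner case where the bisection never executes, and carefully confirming \Holder-ball membership before invoking Taylor. The algebraic content beyond these invariants is essentially a one-line Newton-identity plus Taylor bound.
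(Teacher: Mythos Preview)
Your proposal is correct, and the complexity analysis is essentially identical to the paper's. Your movement-bound argument, however, takes a genuinely different and more direct route. The paper introduces an auxiliary point $(\tx,\tlambda)$ solving the regularized second-order model with $\tlambda=\frac{H}{2\sigma}\|\tx-y\|^\nu$, and argues by contradiction: if $\lambda/2\ge\tlambda$, then H\"older continuity would force $\mscheck(\lambda/2;y,\sigma)$ to be True, contradicting its observed failure; from $\lambda<2\tlambda$ and the monotonicity $\|\tx-y\|\le 2\|x-y\|$ the bound follows. You skip the auxiliary construction entirely, extracting the inequality $\laminvalid<\frac{H}{\sigma(1+\nu)}\|x'-y\|^\nu$ directly from the failed MS check at $\laminvalid=\lambda/2$, then transferring it to $\|x-y\|$ via the same monotonicity lemma. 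This is cleaner and avoids the detour through $\tlambda$; the only cost is a slightly worse constant ($\lambda\le\frac{2^{1+\nu}H}{(1+\nu)\sigma}\|x-y\|^\nu$ versus the paper's $\lambda\le\frac{2^\nu H}{\sigma}\|x-y\|^\nu$), but both fit the stated bound. One minor note: your ``far side'' justification in the complexity part is phrased loosely---the inequality $|\log_2(\lambda/\blambda)|\ge 2^{k^\star}-1$ actually comes from the \emph{near} endpoint of the post-search bracket (e.g., $\lambda\ge\laminvalid_{\mathrm{pre}}=\blambda\cdot 2^{2^{k^\star}-1}$ in the increasing case)---but the conclusion is right.
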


To understand the $\lazyflag$ option of \Cref{alg:adaptive-msn-step}, note that when $\blambda$ is valid we will necessarily output $\lambda \le \blambda$. In such case \Cref{thm:main} does not require a movement bound (except for the first iteration). Therefore, we might as well save on computation and return $\blambda$. The following \Cref{coro:adaptive-msn-step} gives the overall complexity bound for the combination of \Cref{alg:optms} and $\adaOracle$, leveraging ``lazy'' oracle calls to show that the number of linear system solves per iteration is essentially constant. %

\begin{restatable}{corollary}{restateCorrAmsn}\label{coro:adaptive-msn-step}
	Consider \Cref{alg:optms} with initial point $x_0$, parameters $\alpha$ satisfying $1.1\le \alpha = O(1)$  and $\blambdainit$, and $\sigma$-MS oracle $\adaOracle$ (with $\lazyflag=$ True in all but the first iteration) with $\sigma \in (0.01,0.99)$. For any $H,\epsilon>0$, $\nu\in[0,1]$ and any $\xopt\in\R^d$, if $f$ is convex with $(H,\nu)$-\Holder Hessian,
	the algorithm produces an iterate $x_T$ such that $f(x_T) \le f(\xopt) + \epsilon$ using 
		$T = O\prn*{ \prn*{{H \norm{x_0-\xopt}^{2+\nu}}/{\epsilon}}^{2/(4+3\nu)}}$
	Hessian evaluations and $O\prn*{T + \log\log\max\crl*{\frac{H R^\nu}{\blambdainit},
		\frac{\blambdainit R^2}{\epsilon} }}$
	linear system solutions, where $R$ is the distance between $x_0$ and $\argmin_{x'}f(x')$.
\end{restatable}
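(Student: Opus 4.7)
The plan is to assemble \Cref{coro:adaptive-msn-step} from \Cref{thm:main} (iteration complexity of the outer acceleration) and \Cref{thm:amsn} (guarantees of the inner adaptive oracle), handling Hessian evaluations and linear-system solves separately.

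\textbf{Verifying the hypotheses of \Cref{thm:main}.} By \Cref{thm:amsn}, $\adaOracle$ with parameter $\sigma$ is a $\sigma$-MS oracle. For the movement bound required by \Cref{thm:main}, I would observe that the corollary calls $\adaOracle$ with $\lazyflag = $ False only at $t=1$; at all later iterations a movement bound is needed only when $\lambda_t > \blambda_t$. By \Cref{thm:amsn}, in precisely these cases and under the (global, hence local) $(H,\nu)$-\Holder Hessian hypothesis, the output $(\tx_t, y_{t-1}, \lambda_t)$ satisfies a $(1+\nu, c)$-movement bound with $c = (2H/\sigma)^{1/(1+\nu)}/\sigma$.

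\textbf{Hessian-evaluation count.} I then apply \Cref{thm:main} with $s = 1+\nu$ and the above $c$. Since $\alpha \ge 1.1$ and $\alpha = O(1)$, and since $s \in [1,2]$, the constants satisfy $C_{\alpha,s} = O(1)$. Computing $c^s = c^{1+\nu} = 2H\sigma^{-(2+\nu)} = O(H)$, the iteration bound becomes
\[
T = O\!\left( \left(\tfrac{c^s R^{s+1}}{\epsilon}\right)^{2/(3s+1)} \right) = O\!\left( \left(\tfrac{H R^{2+\nu}}{\epsilon}\right)^{2/(4+3\nu)} \right).
\]
Each outer iteration triggers one call to $\adaOracle$, which evaluates $\hess f(y_t)$ exactly once, so the Hessian-evaluation count matches this $T$.

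\textbf{Linear-system count (the main obstacle).} Summing the per-call bound from \Cref{thm:amsn} gives at most $2T + 2\sum_{t=1}^T \log_2(1 + |\log_2(\lambda_t/\blambda_t)|)$ linear-system solves; the task is to show the second sum is $O(T) + O(\log\log \Lambda)$ with $\Lambda = \max\{HR^\nu/\blambdainit,\, \blambdainit R^2/\epsilon\}$. With $\lazyflag = $ True for $t\ge 2$, whenever $\blambda_t$ is already valid the oracle returns immediately with $\lambda_t = \blambda_t$, contributing $O(1)$ solves; only invalid iterations incur the double-log overhead. I would first bound the range that $\blambda_t$ explores: it cannot persist above the largest threshold $\blambda^*_t$ for which the MS condition is first satisfied, which is at most $O(HR^\nu)$ by combining the movement bound with the boundedness of MS-accelerated iterates in a ball around $\xopt$; nor can it stay below $\Omega(\epsilon/R^2)$ throughout, since by \Cref{thm:main} the optimality gap is controlled by $D_0/A_T$ and $A_T$ is in turn lower-bounded by a sum of $1/\sqrt{\blambda_t}$ contributions. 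Together these two bounds yield the factor $\Lambda$.

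\textbf{Amortizing the double-log cost.} The last step is to argue that the multiplicative walk $\log \blambda_t$ (which moves by $\pm \log\alpha$ at each step depending on validity) tracks the time-varying threshold $\log \blambda^*_t$: once the walk catches up to within $O(\log\alpha)$ of $\log \blambda^*_t$, it oscillates there, so in steady state $\log_2(\lambda_t/\blambda_t) = O(1)$ and each call costs only $O(1)$ solves. The initial catch-up phase—where $\blambda_t$ may be far from $\blambda^*_t$—contributes invalid iterations whose double-log costs, by concavity of $\log\log$ and the fact that the gap shrinks by a factor $\alpha$ per invalid step, sum to at most $O(\log\log \Lambda)$. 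Combining the $2T$ base cost, the $O(T)$ steady-state overhead, and the $O(\log\log \Lambda)$ warmup overhead yields the claimed $O(T + \log\log \Lambda)$ bound. The main technical difficulty I anticipate is making this amortized tracking argument precise, especially handling that $\blambda^*_t$ itself varies with $y_t$ across iterations rather than being a fixed target.
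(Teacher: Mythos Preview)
Your setup and Hessian-evaluation count are correct and match the paper. The gap is in the linear-system count: the ``tracking'' argument you sketch does not go through, and the paper uses a genuinely different mechanism.

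The problem with your amortization is exactly the one you flag yourself. You want to argue that once $\log\blambda_t$ catches up to $\log\blambda^*_t$ it stays within $O(\log\alpha)$ of it, so that $\log_2(\lambda_t/\blambda_t)=O(1)$ in steady state. But $\blambda^*_t$ depends on $y_{t-1}$, and $y_{t-1}$ itself depends on $\blambda_t$ through the convex-combination step; there is no a priori Lipschitz or slow-variation bound on $t\mapsto\log\blambda^*_t$, so the walk could be perpetually chasing a moving target. Likewise, your claim that ``the gap shrinks by a factor~$\alpha$ per invalid step'' conflates the update $\blambda_{t+1}=\alpha\blambda_t$ with a bound on $\lambda_{t+1}/\blambda_{t+1}$, which you do not have. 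Without such control, the sum of $\log\log(\lambda_t/\blambda_t)$ terms over $t\in\SsupT$ is not bounded by $O(T)$ via your route.

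The paper's proof does not attempt to track a threshold. Instead it bounds the interior contribution (iterations $2\le t\le T-1$ with $\lambda_t>\blambda_t$) analytically: first $\log\log z \le O(z^{2/(4+3\nu)})$, then the movement bound gives $\lambda_t/\blambda_t = O(H\norm{\tx_t-y_{t-1}}^\nu/\blambda_t)$, and finally a reverse H\"older argument (combining the potential bound $\sum_{t\in\SupT} A_t\blambda_t\norm{\tx_t-y_{t-1}}^2 = O(D_0)$ with the growth bound $\sqrt{A_T}=\Omega(\sum_t 1/\sqrt{\blambda_t})$) shows
\[
\sum_{t\in\SsupT[T-1]}\!\Bigl(\tfrac{H\norm{\tx_t-y_{t-1}}^\nu}{\blambda_t}\Bigr)^{2/(4+3\nu)} = O\bigl((H\norm{x_0-\xopt}^\nu A_{T-1})^{2/(4+3\nu)}\bigr) = O(T).
\]
So the interior iterations contribute $O(T)$ solves with no $\log\log$ residue at all. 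The entire $O(\log\log\Lambda)$ term comes from two isolated iterations, the first ($t=1$, non-lazy) and the last ($t=T$, where $f(x_T)\ge f(\xopt)$ may fail), each handled by direct bounds on $\lambda_1/\blambdainit$ and $\lambda_T/\blambda_T$ via \Cref{lem:ms-sc-bounds} and the above display. This is the key idea you are missing: the amortization is not a dynamical tracking argument but a reverse-H\"older consequence of the same potential bounds that drive \Cref{thm:main}.
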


Note that as long as $\blambdainit$ is in the range $\brk[\big]{2^{-2^T}HR^\nu , 2^{2^T}\epsilon R^{-2} }$, the double logarithmic term in our bound on linear system solution number is $O(T)$. Therefore, the overall bound is $O(T)$ for an extremely large range of $\blambdainit$ values.

\subsection{First-order implementation via MinRes/Conjugate Residuals}\label{ssec:solvers-amsn-fo}

We now present a first-order implementation of our adaptive oracle,  $\foadaOracle$ (\Cref{alg:adaptive-msn-cg-step}), which replaces exact linear system solutions with approximations obtained via Hessian-vector products and the MinRes/Conjugate Residuals method~\cite{stiefel1955relaxationsmethoden,fong2012cg}. Similar to \Cref{alg:adaptive-msn-step}, the algorithm searches for $\lambda$ such that $x_\lambda \approx y -[\hess f(y) + \lambda I]^{-1} \grad f (y)$ satisfies the MS condition, but $x_{\lambda/2}$ does not. Departing from the double-exponential scheme of \Cref{alg:adaptive-msn-step}, here we adopt the following doubling scheme that allows us to control the cost of the $x_\lambda$ approximation. If $\blambda$ is such that $x_{\blambda}$ does not satisfy the MS condition, we repeatedly test $\lambda=2\blambda, 4\blambda, 8\blambda, \ldots$ and return the first one for which $x_\lambda$ satisfies the MS condition. If $x_{\blambda}$ satisfies the MS condition and the algorithm is $\lazyflag$, we immediately return it. Otherwise, we repeatedly test $\lambda = \frac{1}{2}\blambda, \frac{1}{4}\blambda, \frac{1}{8}\blambda, \ldots$ until reaching $\lambda$ for which $x_\lambda$ does not satisfy the MS condition, and return $x_{2\lambda}$. 

The subroutine $\minresnewton$ of \Cref{alg:adaptive-msn-cg-step} takes as input a matrix $A$, a vector $b$, and accuracy parameter $\lambda\sigma$, and iteratively generates $\{w_i\}$ that approximate $A^{-1}b$. The construction of the MinRes/Conjugate Residuals method guarnatees that $w_i$ minimizes the norm of the residual $r_i=Aw_i - b$ in the Krylov subspace $\mathrm{span}\{b, Ab, \ldots, A^{i-1}b\}$. The key algorithmic decision here is when to stop the iterations: stop too early, and the approximation for the Newton step might not be accurate enough to guarantee a movement bound; stop too late, and incur a high Hessian-vector product complexity. We introduce a simple stopping condition (\cref{line:amsnfo-while}) that strikes a balance. On the one hand, we show that whenever the condition $\norm{r_i} \le \frac{\lambda\sigma}{2}\norm{w_i}$ holds, the resulting point $x$ can certify roughly the same movement bounds as exact Newton steps. On the other hand, by invoking the complexity bounds in~\cite{lee2021geometric} and using the the optimality of  $\norm{r_i}$, we guarantee that the stopping condition is met within a number of iterations proportional to $1/\sqrt{\lambda}$. The structure of our doubling scheme for $\lambda$ then allows us to relate the overall first-order complexity to the lowest value of $\lambda$ queried, obtaining the following guarantees. See proofs in \Cref{apdx:solvers-amsn-cg}. 

\begin{restatable}{theorem}{restateThmAmsnFO}\label{thm:amsn-fo}
	\Cref{alg:adaptive-msn-cg-step} with parameter $\sigma$ is a $\sigma$-MS oracle $\foadaOracle$. For any $y\in\R^d$, computing $(x,\lambda)=\foadaOracle(y)$ requires at most $O\prn*{\sqrt{1+\frac{\opnorm{\hess f(y)}}{\sigma \min\{\lambda,\blambda\}}}}$ Hessian-vector product and $O(\abs*{\log \frac{\lambda}{\blambda}})$ gradient computations.
	 If $\lazyflag$ is False or $\lambda > \blambda$, and if $\hess f$ is $(H,\nu)$-\Holder, then $(x,y,\lambda)$ satisfy a
	$\prn*{1+\nu, (6H/\sigma)^{1/(1+\nu)}}$-movement bound.
\end{restatable}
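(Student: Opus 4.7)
My plan is to verify each of the theorem's three claims in turn, leveraging Taylor expansion of $\grad f$ around $y$ together with the two explicit checks the algorithm performs: the $\minresnewton$ stopping criterion and the MS condition at \cref{line:newton-cr-ms-check}. The MS oracle property is immediate by construction, since \Cref{alg:adaptive-msn-cg-step} returns $(x,\lambda)$ only after confirming $\norm{x-(y-\tfrac{1}{\lambda}\grad f(x))}\le \sigma\norm{x-y}$.

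For the movement bound, under either hypothesis ($\lazyflag$ is False or $\lambda > \blambda$), termination of the Repeat loop forces the algorithm to have previously tested $\lambda' = \lambda/2$ with some MinRes iterate $w'$ that passed the $\minresnewton$ stopping criterion but \emph{failed} the MS check. The stopping rule gives $\norm{(\hess f(y)+\lambda' I)w' + \grad f(y)} \le \tfrac{\sigma\lambda'}{2}\norm{w'}$, while the failed MS check reads $\norm{w' + \tfrac{1}{\lambda'}\grad f(x')} > \sigma\norm{w'}$ with $x' = y+w'$. The $(H,\nu)$-\Holder property of $\hess f$ yields $\grad f(x') = \grad f(y) + \hess f(y)w' + E'$ with $\norm{E'}\le \tfrac{H}{1+\nu}\norm{w'}^{1+\nu}$, and rearranging these three inequalities produces the lower bound $\norm{w'}^\nu > \tfrac{\sigma\lambda'(1+\nu)}{2H}$. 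I then translate this into a bound on the returned $\norm{w}$ via two ingredients: (i) a spectral analysis of $\hess f(y) \succeq 0$ giving $\norm{w^*(\lambda)} \ge \tfrac{1}{2}\norm{w^*(\lambda/2)}$ for the exact regularized-Newton steps $w^*(\mu) := -(\hess f(y)+\mu I)^{-1}\grad f(y)$ (via the elementary inequality $(t+\lambda)^2/(t+\lambda/2)^2 \le 4$ for $t\ge 0$), and (ii) the $\minresnewton$ stopping rule combined with $\hess f(y) + \lambda I \succeq \lambda I$, which implies $\norm{w-w^*(\lambda)} \le \tfrac{\sigma}{2}\norm{w}$ and an analogous bound for $w'$. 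Chaining the resulting relations yields $\norm{w}^\nu \ge c_\sigma \norm{w'}^\nu$ for an explicit constant, producing the claimed $(1+\nu, (6H/\sigma)^{1/(1+\nu)})$-movement bound.

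For complexity, the outer Repeat loop runs at most $O(\abs{\log(\lambda/\blambda)})$ times since each iteration doubles or halves $\lambda$ and the algorithm returns at the first valid $\lambda$; each iteration performs one fresh gradient evaluation at $x$, yielding the stated gradient bound. For the Hessian-vector product count, I would invoke the MinRes/Conjugate Residuals analysis of~\cite{lee2021geometric} applied to $A_i = \hess f(y) + \lambda_i I$: reaching the stopping criterion $\norm{r_j}\le \tfrac{\sigma\lambda_i}{2}\norm{w_j}$ requires $O(\sqrt{(1+\opnorm{\hess f(y)}/\lambda_i)/\sigma})$ inner iterations. Since the outer loop visits a geometric progression of $\lambda_i$ values, summing the per-call costs produces a geometric series dominated (up to a constant) by the contribution from the smallest $\lambda_i$ visited, which is $\Theta(\min\{\lambda,\blambda\})$; this yields the stated $O(\sqrt{1+\opnorm{\hess f(y)}/(\sigma\min\{\lambda,\blambda\})})$ bound.

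The two main obstacles I foresee are (a) bounding the MinRes iteration count under the nonstandard stopping criterion $\norm{r_j}\le\tfrac{\sigma\lambda}{2}\norm{w_j}$, which involves the iterate norm rather than $\norm{b}$ and makes the $\sigma$-dependence appear inside the square root rather than as a logarithmic factor -- requiring the refined analysis of~\cite{lee2021geometric} instead of a textbook MinRes residual bound -- and (b) carefully tracking constants through the movement-bound chain so they collapse to the stated $6H/\sigma$, since the worst-case ratio of exact Newton-step norms across a halving of $\lambda$ can be as large as $2$ and the MinRes approximation contributes an additional $\sigma$-dependent multiplicative factor.
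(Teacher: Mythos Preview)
Your proposal is correct and largely parallel to the paper's proof, with one genuine variation in the movement-bound argument and one detail you will need to supply for the Hessian-vector-product count.

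\textbf{Movement bound.} The paper takes a slightly more indirect route: it introduces an auxiliary pair $(\tw,\tlambda)$ satisfying $\tlambda = \tfrac{H}{\sigma}\norm{\tw}^\nu$ and $\tw = -(\hess f(y)+\tlambda I)^{-1}\grad f(y)$, shows that the MS check at \cref{line:newton-cr-ms-check} must \emph{succeed} whenever $\lambda_k \ge \tlambda$ (using $\norm{w_i}\le\norm{w^\star}\le\norm{\tw}$ from \Cref{lem:cr-properties,lem:reg-newton-step-properties}), deduces $\lambda \le 2\tlambda$, and then compares $\norm{w}$ directly to $\norm{\tw}$. Your approach---directly inverting the failed MS check at $\lambda/2$ to extract $\norm{w'}^\nu > \tfrac{\sigma\lambda'(1+\nu)}{2H}$ and then chaining $\norm{w}\to\norm{w^*(\lambda)}\to\norm{w^*(\lambda/2)}\to\norm{w'}$---is more direct and also works. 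However, to recover exactly the stated constant $6H/\sigma$ you should use the MinRes norm-monotonicity property $\norm{w'}\le\norm{w^*(\lambda')}$ (part~1 of \Cref{lem:cr-properties}) for the upper bound on $\norm{w'}$, not the stopping rule alone: the latter only gives $\norm{w'}\le\tfrac{2}{2-\sigma}\norm{w^*(\lambda')}$, and the extra factor pushes the constant to roughly $12H/\sigma$ at $\nu=1$.

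\textbf{Hessian-vector-product count.} Summing the per-call MinRes cost $O\bigl(\sqrt{1+\opnorm{\hess f(y)}/(\sigma\lambda_k)}\bigr)$ over the geometric sequence of $\lambda_k$ values handles the square-root part, but each call also contributes an $O(1)$ term, which naively yields an additive $O(\abs{\log(\lambda/\blambda)})$ in the HVP count---not permitted by the theorem. The paper removes this by arguing that once $\lambda_k \ge \tfrac{4\opnorm{\hess f(y)}}{\sigma}$, the $\minresnewton$ loop terminates after a single step, and the only matrix-vector product it needs, $\hess f(y)\grad f(y)$, is independent of $\lambda_k$ and can be cached across calls. You correctly flagged obstacle~(a) as requiring the refined $O(t^{-2})$ residual decay of~\cite{lee2021geometric}; this caching observation is the additional ingredient you need.
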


\begin{restatable}{corollary}{restateCorrAmsnFO}
	Consider \Cref{alg:optms} with initial point $x_0$, parameters $\alpha$ satisfying $1.1\le \alpha = O(1)$ and $\blambdainit$, and $\sigma$-MS oracle $\foadaOracle$ with $\lazyflag$ set to True in all but the first iteration and $\sigma \in (0.01,0.99)$.
	For any $L,H,\epsilon>0$, $\nu\in[0,1]$ and any $\xopt\in\R^d$, if $f$ is convex with $(H,\nu)$-\Holder Hessian and $L$-Lipschitz gradient,
	the algorithm produces an iterate $x_T$ such that $f(x_T) \le f(\xopt) + \epsilon$ within 
		$T = O\prn*{\prn*{\frac{H \norm{x_0-\xopt}^{2+\nu}}{\epsilon}}^{2/(4+3\nu)}}$
	iterations and at most  $O\prn*{\prn*{\frac{L\norm{x_0-\xopt}^2}{\epsilon}}^{1/2}  + \sqrt{\frac{L}{\blambdainit}}+ \log\frac{\blambdainit}{L}}$
	gradient and Hessian-vector product evaluations.
\end{restatable}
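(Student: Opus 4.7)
The plan is to combine \Cref{thm:main} with the per-oracle-call guarantees of $\foadaOracle$ from \Cref{thm:amsn-fo}, following the template of the preceding corollary. First I would bound the outer iteration count $T$, then the total first-order work (Hessian-vector products and gradient evaluations).

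For the outer iteration count, \Cref{thm:amsn-fo} certifies that $\foadaOracle$ is a $\sigma$-MS oracle and that $(\tx_t,y_{t-1},\lambda_t)$ satisfies a $(1+\nu,(6H/\sigma)^{1/(1+\nu)})$-movement bound whenever $\lazyflag$ is False (true on the first iteration by assumption) or $\lambda_t>\blambda_t$ (true for all $t\in\SsupT$)---exactly the cases in which \Cref{thm:main} requires a movement bound. Invoking \Cref{thm:main} with $s=1+\nu$, $c=(6H/\sigma)^{1/(1+\nu)}$, and treating $\alpha$ and $1/(1-\sigma)$ as $O(1)$ yields $T=O((H\norm{x_0-\xopt}^{2+\nu}/\epsilon)^{2/(4+3\nu)})$.

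For the total first-order work, I would first exploit that in every lazy call (every iteration after the first) we have $\min(\lambda_t,\blambda_t)=\blambda_t$: either the MS check succeeds at $\blambda_t$ and the oracle returns immediately with $\lambda_t=\blambda_t$, or it fails and $\lambda_t$ is obtained by successive doublings of $\blambda_t$, giving $\lambda_t\ge\blambda_t$. Combined with $\opnorm{\hess f(y)}\le L$ from gradient Lipschitzness, \Cref{thm:amsn-fo} bounds the per-call cost by $O(\sqrt{L/(\sigma\blambda_t)})$ Hessian-vector products and $O(1+\log^+(\lambda_t/\blambda_t))$ gradient computations. The central technical claim I would need is
\[
\sqrt{A_T}\ge\Omega(1)\sum_{t\in[T]}1/\sqrt{\blambda_t}.
\]
Once established, combining it with $A_T=O(D_0/\epsilon)$ at the terminating step gives $\sqrt{L}\sum_t 1/\sqrt{\blambda_t}=O(\sqrt{L\norm{x_0-\xopt}^2/\epsilon})$, the leading term of the claimed bound. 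The additive overheads come from the non-lazy first iteration---whose smallest trial $\lambda$ can be as low as $\blambdainit$, so ConjRes contributes $O(\sqrt{L/\blambdainit})$ HvPs (using that the geometric sequence of trial-$\lambda$'s makes the total cost dominated by the smallest value)---and from up to $O(\log(\blambdainit/L))$ initial iterations potentially spent multiplicatively shrinking $\blambda_t$ into the productive regime when $\blambdainit\gg L$. The gradient count $\sum_t O(1+\log^+(\lambda_t/\blambda_t))$ is similarly absorbed by these overheads together with the $O(T)$ term, which is in turn dominated by the HvP bound: applying \Cref{thm:main} with the trivial $s=1$, $c=L/\sigma$ movement bound that also holds for $\foadaOracle$ (since the MS check is valid whenever $\lambda=\Omega(L/\sigma)$) gives $T=O(\sqrt{L\norm{x_0-\xopt}^2/\epsilon})$.

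The main obstacle will be proving the extended lower bound $\sqrt{A_T}\ge\Omega(1)\sum_{t\in[T]}1/\sqrt{\blambda_t}$. The proof sketch of \Cref{thm:main} establishes such a bound separately on $\SdownT$ iterations and on a subset $\QT\subseteq\SsupT\cup\{1\}$, but leaves iterates in $\SsupT\setminus\QT$ uncounted. To handle them I would exploit the multiplicative update rule $\blambda_{t+1}\in\{\alpha\blambda_t,\blambda_t/\alpha\}$ of \Cref{alg:optms}: every $t\in\SsupT\setminus\QT$ has a neighbouring iterate in $\SdownT\cup\QT$ whose $1/\sqrt{\blambda}$ term is within an $O(1)$ factor of $1/\sqrt{\blambda_t}$, which lets us charge $1/\sqrt{\blambda_t}$ to that neighbour and recover the sum over all of $[T]$ at the cost of an additional $O(1)$ constant.
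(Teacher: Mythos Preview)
Your proposal is essentially correct and follows the paper's own proof closely: both combine \Cref{thm:amsn-fo} with \Cref{thm:main} for the iteration bound, observe that lazy calls have $\min(\lambda_t,\blambda_t)=\blambda_t$, use $\opnorm{\hess f(y_t)}\le L$, sum the per-call costs via the bound $\sqrt{A_T}=\Omega(1)\sum_{t\in[T]}1/\sqrt{\blambda_t}$, and finish using $A_T=O(\norm{x_0-\xopt}^2/\epsilon)$ and $T=O(\sqrt{L\norm{x_0-\xopt}^2/\epsilon})$ from the $(L,0)$-\Holder Hessian bound.

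Three small remarks. First, what you call the ``main obstacle'' is already stated and proved as~\eqref{eq:increase-A-all} in \Cref{lem:increase}; you do not need to re-derive it, though your charging sketch is exactly how the paper proves it. Second, your attribution of the $\log(\blambdainit/L)$ overhead is off: after the non-lazy first call, \Cref{alg:optms} sets $\blambda_1=\lambda_1$, so there are no outer iterations ``shrinking $\blambda_t$ from $\blambdainit$.'' The $\log(\blambdainit/L)$ term instead comes from the gradient count $|\log(\lambda_1/\blambdainit)|$ of the first oracle call itself, bounded via $\lambda_1=O(L)$. Third, be careful with ``$A_T=O(D_0/\epsilon)$ at the terminating step'': at step $T$ the error is already $\le\epsilon$, so you only have $A_{T-1}=O(D_0/\epsilon)$; the paper closes this via $A_T=O(A_{T-1})$ (\Cref{lem:A-mult-stability}).
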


Note that the $L$-Lipschitz gradient assumption implies an $(L,0)$-\Holder Hessian assumption, giving the iteration complexity bound we state in \Cref{table:summary}. Moreover, note that our algorithm has the optimal $O( \sqrt{L\norm{x_0-\xopt}^2/\epsilon} )$ complexity for any $\blambdainit$ in the range $\Omega(\epsilon / \norm{x_0-\xopt}^2 )$ to $L \exp\crl*{O( \sqrt{L\norm{x_0-\xopt}^2/\epsilon} )}$. By choosing a large $\blambdainit$ (say $10^6$) we may guarantee that only the logarithmic term is added to the optimal first-order evaluation complexity.

\section{Experiments}\label{sec:experiments}

\begin{figure}
	\includegraphics[width=1\textwidth]{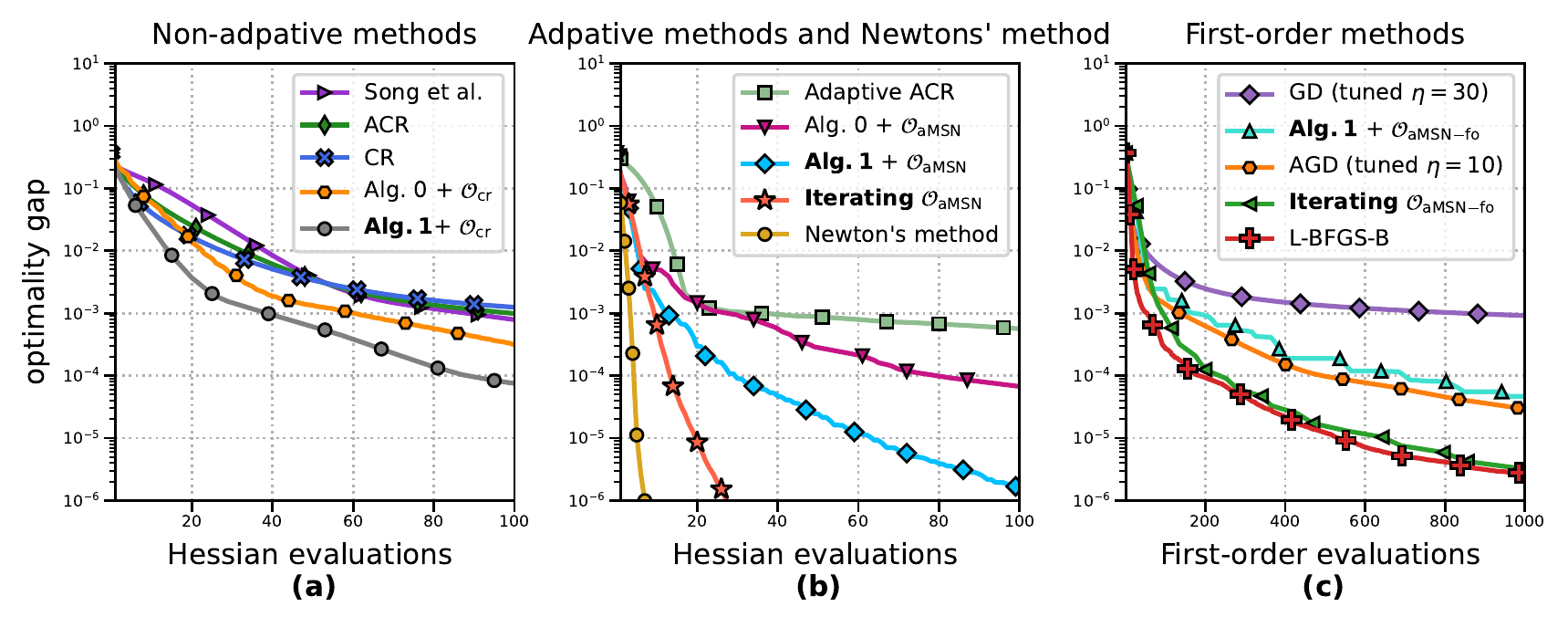}
	\caption{Empirical results for logistic regression on the ``a9a'' dataset. See \Cref{sec:experiments} for description, and \Cref{app:experiments-extra-experiments} for additional datasets. Boldface legend entries denote methods we contribute.
	}\label{fig:methods-comp}
\notarxiv{\vspace{-16pt}}
\end{figure}

We conduct three sets of experiments. First, we consider $\oracle[cr]$ with a fixed parameter $M$ and compare previous acceleration schemes to \Cref{alg:optms}. Second, we combine \Cref{alg:optms} with our adaptive $\adaOracle$  and test it against previous adaptive accelerated (second-order) methods and Newton's method.
Finally, we compare \Cref{alg:optms} with our first-order adaptive oracle $\foadaOracle$ to other first-order methods. We provide full implementation details in \Cref{app:experiments-technical-details}. 
\Cref{fig:methods-comp} summarizes our results for logistic regression on the  `a9a' dataset~\cite{chang2011libsvm}; see \Cref{app:experiments-extra-experiments} for similar results on three additional  datasets.
These experiments were conducted with \emph{no tuning} of \Cref{alg:optms}: the parameters $\sigma$ and $\alpha$ were simply set to $\half$ and $2$, respectively. An additional experiment, reported in \Cref{app:experiments-parameter-sensitivity}, shows that the algorithm is indeed insensitive to that choice.

\paragraph{Non-adaptive methods.}
We use the non-adaptive oracle $\oracle[cr]$~\eqref{eq:cr-oracle}, and take $M$ to be $0.2\bar{H}$ where, for feature vectors $\phi_1,\ldots,\phi_n$, $\bar{H} = \opnorm{\frac{1}{n}\sum_{i=1}^{n}\phi_i \phi_i^T}\max_{i \in [n]}\norm*{\phi_i}$ is an upper bound on $6\sqrt{3}\approx 10$ times the  Lipschitz constant of the logistic regression Hessian~\cite[see, e.g.,][]{song2021unified}. Fixing the MS oracle allows for a controlled comparison of different acceleration schemes: \Cref{fig:methods-comp}(a) shows that standard MS acceleration with a  carefully-implemented bisection outperforms standard cubic regularization (CR) and its accelerated counterpart  (ACR)~\cite[Alg.~4.8]{nesterov2008accelerating}, and removing the bisection via \Cref{alg:optms} yields the best results. We also implemented the heuristic suggested by \citet{song2021unified}, where instead of a bisection in \Cref{alg:ms} we select a sequence $\blambda_t$ such that $A_{t} =  \frac{1}{M\norm*{x_0 - \xopt}}\prn*{{t}/{3}}^{7/2}$. 
In~\Cref{app:experiments-tuning-cr} we tune the $M$ parameter for each method separately, finding that the optimal $M$ for CR is near $0$, so that $\oracle[cr]$ is nearly a Newton step (and not a valid MS oracle).

\paragraph{Adaptive methods and Newton's method.}
We compare the following adaptive accelerated second-order methods (which do not require an estimate of the Hessian Lipschitz constant): Adaptive ACR \cite[Algorithm 4]{grapiglia2020tensor} (which adaptively sets $M$ in $\oracle[cr]$), standard MS acceleration (\Cref{alg:ms}) with $\adaOracle$ (\Cref{alg:adaptive-msn-step}, with $\lazyflag=$ False) and \Cref{alg:optms} with $\adaOracle$ (with $\lazyflag=$ True in all but the first iteration).  \Cref{fig:methods-comp}(b) shows that the latter converges significantly faster than the other adaptive acceleration schemes. However, the classical ``unaccelerated'' Newton iteration $x_{t+1} = -(\hess f(x_t))^{-1} \grad f(x_t)$ strongly outperforms all ``accelerated'' methods, indicating that momentum mechanisms might actually be slowing down convergence in logistic regression problems. To test this, we consider the following simple iteration of (the non-lazy variant) of our oracle: $x_{t+1}, \lambda_{t+1} = \adaOracle(x_t; \lambda_t/2)$; it significantly improves over \Cref{alg:optms}.

These results beg the question: is momentum ever useful for second-order methods? In \Cref{app:experiments-worst-case} we test different schemes on the lower bound construction~\cite{arjevani2019oracle,grapiglia2020tensor}. We find momentum is helpful for $\oracle[cr]$, but not for the adaptive oracle $\adaOracle$. What makes Newton's method perform so well on logistic regression, and whether simply iterating $\adaOracle$ is worst-case optimal, are important questions for future work.

\paragraph{First-order methods.}
We compare our first-order adaptive $\foadaOracle$ (\Cref{alg:adaptive-msn-cg-step}) to the following baselines: gradient descent and accelerated gradient descent~\cite{nesterov1983method} with a tuned step size $\eta$, and L-BFGS-B from SciPy~\cite{byrd1995limited,zhu1997algorithm,scipy2020}. In light of the above comparison with Newton's method, we also test the following simple iteration of (the lazy variant) of our oracle: $x_{t+1},\lambda_{t+1} = \foadaOracle(x_t; \lambda_t/2)$.  \Cref{fig:methods-comp}(c) shows that forgoing (second-order) momentum is better for the first-order oracle, too: \Cref{alg:optms} performs comparably to tuned AGD (without tuning a single parameter), and the equally adaptive $\foadaOracle$ iteration performs comparably to with L-BFGS-B.

\section*{Acknowledgments}
We thank the anonymous reviewers for helpful questions and suggestions, leading to important writing clarifications, a simplification of the proof of \Cref{lem:growth-rate-high}, and an improved complexity bound for second-order methods under third-order smoothness (due to the observation that our framework is compatible with the oracle of~\cite{nesterov2021superfast}). YC and DH were supported in part by the Israeli Science Foundation (ISF) grant no.\ 2486/21 and the Len Blavatnik and the Blavatnik Family foundation. YJ was supported in part by a Stanford Graduate Fellowship and the Dantzig-Lieberman Fellowship. 
AS was supported in part by a Microsoft Research Faculty Fellowship, NSF CAREER Award CCF-1844855, NSF Grant CCF-1955039, a PayPal research award, and a Sloan Research Fellowship.

\bibliographystyle{abbrvnat}

\notarxiv{
\section*{Checklist}

\begin{enumerate}

	\item For all authors...
	\begin{enumerate}
		\item Do the main claims made in the abstract and introduction accurately reflect the paper's contributions and scope?
		\answerYes{}
		\item Did you describe the limitations of your work?
		\answerYes{}
		\item Did you discuss any potential negative societal impacts of your work?
		\answerNA{}
		\item Have you read the ethics review guidelines and ensured that your paper conforms to them?
		\answerYes{}
	\end{enumerate}

	\item If you are including theoretical results...
	\begin{enumerate}
		\item Did you state the full set of assumptions of all theoretical results?
		\answerYes{}
		\item Did you include complete proofs of all theoretical results?
		\answerYes{see supplementary material}
	\end{enumerate}

	\item If you ran experiments...
	\begin{enumerate}
		\item Did you include the code, data, and instructions needed to reproduce the main experimental results (either in the supplemental material or as a URL)?
		\answerYes{in the supplementary material}
		\item Did you specify all the training details (e.g., data splits, hyperparameters, how they were chosen)?
		\answerYes{full implementation details in~\Cref{app:experiments-technical-details}}
		\item Did you report error bars (e.g., with respect to the random seed after running experiments multiple times)?
		\answerNo{We only experimented with deterministic algorithms}
		\item Did you include the total amount of compute and the type of resources used (e.g., type of GPUs, internal cluster, or cloud provider)?
		\answerNo{We only ran small scale experiments on CPU}
	\end{enumerate}

	\item If you are using existing assets (e.g., code, data, models) or curating/releasing new assets...
	\begin{enumerate}
		\item If your work uses existing assets, did you cite the creators?
		\answerYes{}
		\item Did you mention the license of the assets?
		\answerYes{see~\Cref{app:experiments-technical-details}}
		\item Did you include any new assets either in the supplemental material or as a URL?
		\answerYes{in the supplemental material}
		\item Did you discuss whether and how consent was obtained from people whose data you're using/curating?
		\answerNA{}
		\item Did you discuss whether the data you are using/curating contains personally identifiable information or offensive content?
		\answerNA{}
	\end{enumerate}

	\item If you used crowdsourcing or conducted research with human subjects...
	\begin{enumerate}
		\item Did you include the full text of instructions given to participants and screenshots, if applicable?
		\answerNA{}
		\item Did you describe any potential participant risks, with links to Institutional Review Board (IRB) approvals, if applicable?
		\answerNA{}
		\item Did you include the estimated hourly wage paid to participants and the total amount spent on participant compensation?
		\answerNA{}
	\end{enumerate}

\end{enumerate}

 }	

\newpage
\appendix

\notarxiv{%
\arxiv{\subsection{Additional related work}\label{apdx:related}}
\notarxiv{\section{Additional related work}\label{apdx:related}}

\paragraph{Bisection-free methods for variational inequalities.} 

Monteiro and Svaiter also proposed second-order methods for solving variational inequalities for monotone operators with continuous derivatives~\cite{monteiro2012iteration}, and subsequent work provided improved rates for variational inequalities with continuous higher-order derivatives via tensor methods~\cite{bullins2020higher,jiang2022generalized}. These works also feature an implicit equation over a scalar regularization/step-size parameter, that necessitates a bisection and increases complexity by a logarithmic factor.  In recent papers, \citet{lin2022perseus} and~\citet{adil2022line} remove that logarithmic factor by developing  bisection-free methods for variational inequalities. However, applying these methods directly to convex optimization with Lipschitz $p$th derivatives yields a rate of $O(t^{-(p+1)/2})$ rather than the  optimal $O(t^{-(3p+1)/2})$ rate of our method. Moreover, these works remove bisections using techniques fundamentally different from ours. In particular, they do not apply a damping scheme on the $A_t$ sequence, nor do they apply a multiplicative update for the regularization parameter.

\paragraph{Adaptive Newton and tensor methods.} 
A number of works consider adaptive variants of the cubic-regularized Newton method and its tensor counterparts. \citet{cartis2011adaptive,gould2012updating} propose adaptive variants of cubic regularization for non-convex optimization. For convex optimization, 
\citet{mischenko2021regularized} provides a simple adaptive scheme converging at rate $O(t^{-2})$, followed by an improvement in its guarantee to $O(t^{-3})$ \cite{doikov2021gradient}. For tensor methods, \citet{jiang2020unified} proposes an adaptive regularization scheme for convex functions with Lipschitz-continuous $p^{th}$ derivatives which achieves the rate $O(t^{-p-1})$. In addition, \citet{grapiglia2020tensor} gives analogous results under $\nu$-H\"older continuity of the derivatives. 

Even for the case second-order methods with Lipschitz-continuous Hessian, an adaptive scheme with optimal rate $O(t^{-3.5})$ remained open prior to this work. Beyond removing the bisection from the MS framework, our key algorithmic techniques include directly considering quadratically-regularized Newton step (similar to~\cite{mischenko2021regularized,doikov2021gradient} and different from~\cite{grapiglia2020tensor,jiang2020unified}), and using the original MS approximation condition for selecting an appropriate regularization parameter, which is new in the context of adaptive methods. These techniques  allow us to adapt to both the constant and order of Hessian \Holder continuity simultaneously. 

\paragraph{Comparison to~\cite{kovalev2022first}.} 
In concurrent and independent theoretical work, \citet{kovalev2022first} propose an algorithm that also attains the optimal $p$th derivative evaluation complexity for convex optimization with Lipschitz $p$th derivatives. While also inspired by MS acceleration, the algorithm of~\cite{kovalev2022first} is quite different from ours: they replace the implicitly defined regularization parameters inherent to MS oracles by approximating proximal points with explicit, predetermined regularization parameters. To obtain these proximal points they apply a tensor-extragradient method, and stop it when the MS condition is met. By careful analysis, they show that---even though an individual outer acceleration step requires multiple derivative evaluations---the overall complexity of their method is optimal. In contrast, our method makes a single oracle call (high-order derivative evaluation) per step. To obtain optimal complexity, our method relies on a dynamic sequence of ``guessed'' regularization parameters and a momentum damping schemes that handles cases where these guesses overshoot. The two works offer complementary viewpoints of the algorithmic innovation required for removing bisection from the MS framework. 

In comparison to~\citet{kovalev2022first}, we believe that our algorithm offers advantages in terms of generality and adaptivity. From a generality perspective, our algorithm applies to every setting where the original MS framework applies. In addition to functions with Lipschitz $p$th derivatives, that includes functions with \Holder derivatives~\cite{song2021unified}, ball minimization oracles~\cite{carmon2020acceleration}, and a second-order oracles for functions with Lipschitz third derivative~\cite{nesterov2021superfast,kamzolov2020near}. While extending~\cite{kovalev2022first} to these settings may be possible, it would require additional work in formulating and analyzing an appropriate subproblem solver. Regarding adaptivity, like the original MS framework, our algorithm is agnostic to both the order of the Lipschitz derivative and its corresponding Lipschitz constant.
In contrast, \citet{kovalev2022first} require the derivative order for determining the regularization parameters, and the Lipschitz constant for the subproblem solver. 

\notarxiv{\newpage} %

}
\section{Proof of \Cref{thm:main}}\label{apdx:framework}

In this section, we provide a complete proof for~\Cref{thm:main}. We begin in \Cref{sec:main_thm:potential} with establishing a potential decrease result analogous to the standard analyses of accelerated proximal methods (\Cref{prop:main-potential}). In \Cref{sec:main_thm:at_lower} we then provide a lower bound of $A_T$ in terms of the values of $\blambda_t$ at a subset of ``up'' iterations where $\blambda_t > \lambda_t$ (\Cref{lem:increase}). We apply these results along with the reverse \Holder inequality to obtain \Cref{thm:main} in \Cref{sec:main_thm:proof}. The last part of the analysis relies on two technical lemmas on the growth rates of sequences satisfying certain recursive inequalities (\Cref{lem:growth-rate-high,lem:growth-rate-ball}), which we prove at the end of the section in \Cref{sec:main_thm:helper}. Beyond its utility for proving \Cref{thm:main}, Lemma 1 includes additional lower bounds on $A_T$ in terms of $\lambda'_t$ which we use in the analysis of adaptive oracle implementations in \Cref{apdx:solvers}.

We use the following notation for the set of ``down'' iterations:
\begin{equation*}
	\SdownT\defeq\{t\in[T] \mid \lambda_{t}\leq\blambda_{t}\}
\end{equation*}
and analogously define 
$\SsupT$ (``up'' iterations), $\SupT$ , $\SsdownT$ and $\SequalT$.

\subsection{Potential decrease}
\label{sec:main_thm:potential}

\begin{proposition}\label{prop:main-potential}
	Under the assumptions of~\Cref{thm:main}, let $E_t\defeq f(x_t)-f(\xopt)$, $D_t \defeq \frac{1}{2}\norm{v_t - \xopt}^2$, and $N_{t+1}\defeq \frac{1}{2}\norm{\tx_{t+1}-y_t}^2$ for all $t\ge 0$. Then, for all $t\ge0$
	\begin{equation}\label{eq:main-potential}
	A_{t+1}E_{t+1}+D_{t+1}+(1-\sigma^2)A_{t+1}'\min(\lambda_{t+1},\blambda_{t+1})N_{t+1}\le A_tE_t+D_t.
	\end{equation}
	Consequently, for all $T\ge1$, $\sqrt{A_T}\ge \frac{1}{2}\sum_{t\in\SdownT}1/\sqrt{\blambda_{t}}$,
	\begin{equation}\label{eq:main-potential-implied}
	E_{T}\le \frac{D_0}{A_T}~~\text{, and}~~(1-\sigma^2)\sum_{t\in \SupT}A_{t}\lambda_{t}'N_{t}\le D_0-A_TE_T\,.
	\end{equation}
\end{proposition}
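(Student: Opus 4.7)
The plan is to establish the per-step potential inequality (10) first and then deduce the three consequences in (11). Writing $\Phi_t \defeq A_t E_t + D_t$ for brevity, (10) says $\Phi_{t+1} - \Phi_t \le -(1-\sigma^2)\bA_{t+1}\min(\lambda_{t+1},\blambda_{t+1}) N_{t+1}$. I would split into two cases according to the branch taken at iteration $t+1$: the \emph{down} branch ($\lambda_{t+1} \le \blambda_{t+1}$), where $a_{t+1} = \ba_{t+1}$, $A_{t+1} = \bA_{t+1}$, and $x_{t+1} = \tx_{t+1}$; and the \emph{up} branch ($\lambda_{t+1} > \blambda_{t+1}$), where $a_{t+1} = \gamma_{t+1}\ba_{t+1}$, $A_{t+1} = A_t + \gamma_{t+1}\ba_{t+1}$, and $x_{t+1}$ is the convex combination of $x_t$ and $\tx_{t+1}$ with weights $(1-\gamma_{t+1})A_t/A_{t+1}$ and $\gamma_{t+1}\bA_{t+1}/A_{t+1}$ (which sum to one).

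The core computation proceeds identically in both cases. Applying convexity of $f$ at $\tx_{t+1}$ twice---with reference points $x_t$ and $\xopt$---and, in the up case, once more through the convex-combination definition of $x_{t+1}$, gives the uniform bound
\[
A_{t+1} E_{t+1} - A_t E_t \le \gamma_{t+1}\grad f(\tx_{t+1})^{\T}\bigl[\bA_{t+1}(\tx_{t+1}-y_t) + \ba_{t+1}(v_t-\xopt)\bigr],
\]
where $\gamma_{t+1}=1$ in the down case, and I use the identity $A_t x_t + \ba_{t+1} v_t = \bA_{t+1} y_t$ coming from the $y_t$-update. Expanding $D_{t+1} - D_t$ via $v_{t+1} = v_t - \gamma_{t+1}\ba_{t+1}\grad f(\tx_{t+1})$ and adding the above, the $(v_t - \xopt)$ cross-term cancels exactly---this is precisely why the algorithm uses $a_{t+1} = \gamma_{t+1}\ba_{t+1}$ in the $v$-update---leaving $\gamma_{t+1}\bA_{t+1}\grad f(\tx_{t+1})^{\T}(\tx_{t+1}-y_t) + \tfrac{1}{2}\gamma_{t+1}^2\ba_{t+1}^2\|\grad f(\tx_{t+1})\|^2$. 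Substituting the squared MS condition $\grad f(\tx_{t+1})^{\T}(\tx_{t+1}-y_t) \le -\|\grad f(\tx_{t+1})\|^2/(2\lambda_{t+1}) - (1-\sigma^2)\lambda_{t+1} N_{t+1}$ together with the defining identity $\ba_{t+1}^2 \blambda_{t+1} = \bA_{t+1}$ (immediate from \cref{line:a-quandratic}), the coefficient of $\|\grad f(\tx_{t+1})\|^2/2$ becomes $\gamma_{t+1}\bA_{t+1}\bigl(\gamma_{t+1}/\blambda_{t+1} - 1/\lambda_{t+1}\bigr)$. In the down case ($\gamma_{t+1}=1$) this is nonpositive because $\lambda_{t+1}\le\blambda_{t+1}$; in the up case it is exactly zero because $\gamma_{t+1}=\blambda_{t+1}/\lambda_{t+1}$ is chosen for precisely this cancellation---the main algebraic hurdle of the proof and the motivation for the damping rule. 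The surviving $N_{t+1}$-coefficient is $-(1-\sigma^2)\gamma_{t+1}\bA_{t+1}\lambda_{t+1} = -(1-\sigma^2)\bA_{t+1}\min(\lambda_{t+1},\blambda_{t+1})$ in both cases, giving (10).

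For (11), I telescope (10) from $t=0$ and use $A_0 E_0 = 0$ to obtain $A_T E_T + (1-\sigma^2)\sum_{t=1}^T \bA_t \min(\lambda_t,\blambda_t) N_t \le D_0$. The bound $E_T \le D_0/A_T$ is immediate. Restricting the sum to $t\in\SupT$, where $\min(\lambda_t,\blambda_t) = \blambda_t$, and using $A_t \le \bA_t$ in every iteration (with equality outside $\SsupT$) yields $(1-\sigma^2)\sum_{t\in\SupT} A_t \blambda_t N_t \le D_0 - A_T E_T$. Finally, the lower bound on $\sqrt{A_T}$ uses the identity $\ba_{t+1}^2 \blambda_{t+1} = \bA_{t+1}$ once more to write $\sqrt{\bA_{t+1}} - \sqrt{A_t} = \ba_{t+1}/(\sqrt{\bA_{t+1}} + \sqrt{A_t}) \ge \ba_{t+1}/(2\sqrt{\bA_{t+1}}) = 1/(2\sqrt{\blambda_{t+1}})$. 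In down iterations $A_{t+1} = \bA_{t+1}$ so $\sqrt{A_{t+1}} - \sqrt{A_t} \ge 1/(2\sqrt{\blambda_{t+1}})$, while in up iterations $A_{t+1} \ge A_t$; telescoping gives $\sqrt{A_T} \ge \tfrac{1}{2}\sum_{t\in\SdownT} 1/\sqrt{\blambda_t}$, as claimed.
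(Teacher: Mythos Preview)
Your proposal is correct and follows essentially the same approach as the paper's proof: both use convexity at $\tx_{t+1}$ (with the extra convex-combination step in the up branch), the squared MS condition to bound $\grad f(\tx_{t+1})^{\T}(\tx_{t+1}-y_t)$, the identity $\ba_{t+1}^2\blambda_{t+1}=\bA_{t+1}$, and the observation that the choice $\gamma_{t+1}=\blambda_{t+1}/\lambda_{t+1}$ makes the $\|\grad f(\tx_{t+1})\|^2$ coefficient vanish in the up branch. Your organization is slightly more streamlined---you carry a single factor $\gamma_{t+1}$ (set to $1$ in the down branch) through both cases, whereas the paper derives a common intermediate bound and then splits---but the substance is identical.
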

	
\begin{proof}
By definition of $D_t$ and the definition of $v_{t+1}$ in~\cref{line:v-update}, we have 
\begin{align*}
D_{t+1}& =\frac{1}{2}\norm{v_{t+1} - \xopt}^2
=\frac{1}{2}\norm{(v_{t}-a_{t+1}\grad f(\tx_{t+1})) - \xopt}^{2}
\\ &
=D_{t}+a_{t+1}\inner{\grad f(\tx_{t+1})}{\xopt - v_{t}}+\frac{a_{t+1}^{2}}{2}\norm{\grad f(\tx_{t+1})}^{2}\,.\numberthis \label{eq:potential-sandwitch-top}
\end{align*}
Also, by the definition $y_t$ in~\cref{line:y-update} and $A'_{t+1} \defeq A_t+a'_{t+1}$ in~\cref{line:A-update}, we have
\[
\ba_{t+1}v_{t}=\bA_{t+1}y_{t}-A_{t}x_{t}=\ba_{t+1}\tx_{t+1}+\bA_{t+1}(y_{t}-\tx_{t+1})-A_{t}(x_{t}-\tx_{t+1})\,.
\]
Subtracting $a_{t+1}' \xopt$ from both sides and considering the inner product with $\nabla f(\tx_{t+1})$ then yields 
\begin{flalign*}
& \ba_{t+1}\inner{\grad f(\tx_{t+1})}{\xopt - v_{t}}\\
& \hspace{1.5em} =\grad f(\tx_{t+1})^{\T}\left[\ba_{t+1}(\xopt - \tx_{t+1})+\bA_{t+1}(\tx_{t+1}-y_{t})+A_{t}(x_{t}-\tx_{t+1})\right]\\
 &\hspace{1.5em} \stackrel{(i)}{\leq} \ba_{t+1}[f(\xopt)-f(\tx_{t+1})]+\bA_{t+1}\inner{\grad f(\tx_{t+1})}{\tx_{t+1} - y_{t}}+A_{t}[f(x_{t})-f(\tx_{t+1})]\\
 &\hspace{1.5em} \stackrel{(ii)}{\leq} A_{t}E_{t}-\bA_{t+1}[f(\tx_{t+1})-f(\xopt)]+\bA_{t+1}\inner{\grad f(\tx_{t+1})}{\tx_{t+1} - y_{t}}\,.
\end{flalign*}
where we used $(i)$ convexity of $f$ and $(ii)$ again that $A'_{t+1}=A_t+a'_{t+1}$ (\cref{line:A-update}).

Next, note that we can upper bound $\inner{\grad f(\tx_{t+1})}{\xopt - v_{t}}$ as 
\begin{equation*}
\begin{aligned}
&\lambda_{t+1}\inner{\grad f(\tx_{t+1})}{\tx_{t+1} - y_{t}} \\
&  \hspace{1.5em} =\frac{1}{2}\norm{\grad f(\tx_{t+1})+\lambda_{t+1}(\tx_{t+1}-y_{t})}^{2}-\frac{1}{2}\norm{\grad f(\tx_{t+1})}^{2}-\frac{\lambda_{t+1}^{2}}{2}\norm{\tx_{t+1}-y_{t}}^{2}\\
&  \hspace{1.5em} \leq-\lambda_{t+1}^{2}(1-\sigma^{2})N_{t+1}-\frac{1}{2}\norm{\grad f(\tx_{t+1})}^{2}\,,
\end{aligned}
\end{equation*}
where for the inequality we used that $\oracle$ is a $\sigma$-MS oracle for $f$ (\cref{def:ms-condition}) and the definition of $N_t$.
Substituting back gives
\begin{flalign*}
	&\bA_{t+1}[f(\tx_{t+1})-f(\xopt)] \le 
	\\& \hspace{1.5em} A_t E_t + \ba_{t+1} \inner{\grad f(\tx_{t+1})}{v_{t}-\xopt} - (1-\sigma^2)\bA_{t+1}\lambda_{t+1}N_{t+1} - \frac{\bA_{t+1}}{2\lambda_{t+1}} \norm{\grad f(\tx_{t+1})}^2.\numberthis\label{eq:potential-sandwitch-bottom}
\end{flalign*}

We separately consider the cases $\lambda_{t+1}\le \lambda_{t+1}'$ and $\lambda_{t+1}> \lambda_{t+1}'$. First, when $\lambda_{t+1}\le \lambda_{t+1}'$, by definition in the algorithm $x_{t+1}=\tx_{t+1}$, $a_{t+1}=a'_{t+1}$, $A_{t+1} = A'_{t+1}$ and by~\cref{line:a-quandratic} we have $A_{t+1} = \lambda_{t+1}' a_{t+1}^2$. Consequently, we can combine~\eqref{eq:potential-sandwitch-top} and~\eqref{eq:potential-sandwitch-bottom} to conclude that
\begin{equation}\label{eq:potential-1}
\begin{aligned}
	A_{t+1}E_{t+1}+D_{t+1}+\lambda_{t+1}A_{t+1}(1-\sigma^2)N_{t+1} & \le A_tE_t+D_t+\left(\frac{a_{t+1}^{2}}{2}-\frac{A_{t+1}}{2\lambda_{t+1}}\right)\norm{\grad f(\tx_{t+1})}^{2}\\
	&\leq A_tE_t+D_t.
	\end{aligned}
\end{equation}

On the other hand, when $\lambda_{t+1}>\blambda_{t+1}$, by the definition of $\gamma_{t+1}=\blambda_{t+1}/\lambda_{t+1}$ in~\cref{line:gamma-define}, $a_{t+1}, A_{t+1}$ in~\cref{line:a-update-2}, and $x_{t+1}$ in~\cref{line:x-update-2}, we have $A_{t+1}=(1-\gamma_{t+1})A_t + \gamma_{t+1}\bA_{t+1}$, and therefore convexity of $f$ implies that 
\begin{equation*}
	f(x_{t+1})\le \frac{(1-\gamma_{t+1})A_t}{A_{t+1}} f(x_t)+\frac{\gamma_{t+1}A'_{t+1}}{A_{t+1}}f(\tx_{t+1}).
\end{equation*}

Subtracting $f(\xopt)$, multiplying by $A_{t+1}$, combining with~\eqref{eq:potential-sandwitch-bottom} to bound $f(\tx_{t+1})$ and noting that $\gamma_{t+1}\ba_{t+1} = a_{t+1}$ yields
\begin{flalign*}
	& A_{t+1}E_{t+1} \le (1-\gamma_{t+1})A_t E_t + \gamma_{t+1}\bA_{t+1} [f(\tx_{t+1})-f(\xopt)]
	\\ & \hspace{1.5em} \le A_t E_t + a_{t+1} \inner{\grad f(\tx_{t+1})}{\xopt - v_{t}} - (1-\sigma^2)\bA_{t+1}\blambda_{t+1}N_{t+1} - \frac{\gamma_{t+1}\bA_{t+1}}{2\lambda_{t+1}} \norm{\grad f(\tx_{t+1})}^2.
\end{flalign*}
Noting that  $A'_{t+1} = \blambda_{t+1}(a'_{t+1})^2=\frac{\lambda_{t+1}}{\gamma_{t+1}}a_{t+1}^2$ by definition and further substituting~\eqref{eq:potential-sandwitch-top} into the above display yields
\begin{equation}\label{eq:potential-2}
	A_{t+1} E_{t+1} \le A_t E_t + D_t - D_{t+1} - (1-\sigma^2)\bA_{t+1}\blambda_{t+1}N_{t+1},
\end{equation}
which, when combined with~\eqref{eq:potential-1} yields~\eqref{eq:main-potential}.

The bound on $A_T$ follows from standard argument for Monteiro-Svaiter acceleration restricting to the proper set, i.e. $\SdownT$, see e.g.\ Lemma 27 in~\cite{carmon2020acceleration}, we include here for completeness.
\begin{align*}
\sqrt{A_T} = \sqrt{A_T}-\sqrt{A_0} & = \sum_{t=0}^{T-1}	\frac{A_{t+1}-A_{t}}{\sqrt{A_{t+1}}+\sqrt{A_{t}}} \ge \sum_{t+1\in\SdownT}\frac{a'_{t+1}}{\sqrt{A'_{t+1}}+\sqrt{A_{t}}}\\
& = \sum_{t+1\in\SdownT}\frac{\sqrt{A'_{t+1}/\lambda_{t+1}'}}{\sqrt{A'_{t+1}}+\sqrt{A_{t}}}\ge \frac{1}{2}\sum_{t\in\SdownT}\sqrt{1/\blambda_{t}}.
\end{align*}
For the second line we used that $\blambda_{t+1}(a_{t+1}')^2 = A'_{t+1}$ and that $A'_{t}$ is increasing in $t$. Finally, the conclusions in \eqref{eq:main-potential-implied} follow from inductively applying \eqref{eq:main-potential} and using $A_0 = 0$. 
\end{proof}

\subsection{Lower bounding $A_T$ using ``up'' iterates}
\label{sec:main_thm:at_lower}

Next, we provide more fine-grained bounds on the growth of $A_t$, implied by the adaptive scheme for updating $\blambda$ in~\cref{line:lam-update-1} and~\ref{line:lam-update-2}.
\begin{restatable}{lemma}{lemincrease}
	\label{lem:increase}
	In the setting of Theorem~\ref{thm:main}, for any $\widehat{T}\ge 1$, there exists a non-empty set $\QTS\subseteq \SsupT[\widehat{T}] \cup \{1\}$ and positive numbers 
	$r_{t}$ for each $t\in \QTS$ such that 
	\begin{equation}\label{eq:increase-r}
		\sum_{t\in \QTS} r_t = \frac{\widehat{T}-1}{2},
	\end{equation}
and
\begin{equation}\label{eq:increase-A}
 	\sqrt{A_{\widehat{T}}} \geq\frac{1}{4\sqrt{\alpha}}\sum_{t\in \QTS}\sqrt{\frac{\alpha^{r_{t}-1}}{\blambda_{t}}}.
 \end{equation}
 Further, the definition is consistent in the sense that for any $T\ge 1$ and defined $\QT$, for any $\widehat{T}_1,\widehat{T}_2\in\QT$, suppose $\widehat{T}_1<\widehat{T}_2$ and $r_{t, \widehat{T}}$ are the numbers when applying previous argument to $\widehat{T}$, then $\mathcal{Q}_{\widehat{T}_1}\subseteq \mathcal{Q}_{\widehat{T}_2}$ and $r_{t, \widehat{T}_1} = r_{t, \widehat{T}_2}$ for any $t\le \widehat{T}_1$. Thus, we omit the second subscript in defining $r_t$ when clear from context.
 
Furthermore, for $T\ge 1$,
\begin{equation}\label{eq:increase-A-all}
	\sqrt{A_{T}} \ge \frac{\sqrt{\alpha}-1}{4\alpha}\sum_{t\in [T]}\sqrt{\frac{1}{\blambda_t}}.
\end{equation}
\end{restatable}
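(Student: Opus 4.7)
My plan is to combine the bound $\sqrt{A_{\widehat T}} \ge \tfrac12 \sum_{t \in \SdownT[\widehat T]} 1/\sqrt{\blambda_t}$ from \Cref{prop:main-potential} with the multiplicative structure of $\blambda_t$ enforced by \cref{line:lam-update-1,line:lam-update-2} of \Cref{alg:optms}, namely $\blambda_{t+1}/\blambda_t = \alpha$ if $t \in \SsupT$ and $\blambda_{t+1}/\blambda_t = 1/\alpha$ if $t \in \SdownT$. Consequently, $\log_\alpha(\blambda_t/\blambda_1)$ performs a deterministic $\pm 1$ walk driven by the up/down pattern, and each ``down'' contribution to the \Cref{prop:main-potential} bound can be systematically credited to a nearby ``up'' iterate (or to $t=1$) while gaining an exponential factor from the distance between them in the walk.

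I would prove \eqref{eq:increase-r} and \eqref{eq:increase-A} by induction on $\widehat T$, constructing $\QTS$ and the $r_t$ values incrementally. The base case $\widehat T = 1$ uses $\QTS = \{1\}$, $r_1 = 0$; this satisfies \eqref{eq:increase-r} trivially and reduces \eqref{eq:increase-A} to $\sqrt{A_1} \ge 1/(4\alpha\sqrt{\blambda_1})$, which holds because the algorithm's initialization enforces $\blambda_1 = \lambda_1$ and hence $A_1 = 1/\blambda_1$. For the inductive step, if $\widehat T+1 \in \SsupT$ I add $\widehat T+1$ to $\mathcal Q_{\widehat T+1}$ with $r_{\widehat T+1} = 1/2$; if $\widehat T+1 \in \SdownT$ I leave the set unchanged but increment $r_s$ by $1/2$ for the anchor $s \in \QTS$ chosen so that $\blambda_{\widehat T+1} = \alpha^{-r_s}\blambda_s$ (the element currently on the matching rung of the $\blambda$-walk's ``ladder''). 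Either update adds exactly $1/2$ to $\sum r_t$, maintaining \eqref{eq:increase-r} and giving the required consistency across $\widehat T$ directly from the incremental construction.

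The crux of the induction is showing that \eqref{eq:increase-A} persists. When $\widehat T + 1 \in \SdownT$, the anchor identity $\blambda_{\widehat T+1} = \alpha^{-r_s}\blambda_s$ converts the fresh contribution $\tfrac12/\sqrt{\blambda_{\widehat T+1}} = \tfrac12\sqrt{\alpha^{r_s}/\blambda_s}$ from \Cref{prop:main-potential} into a quantity matching the required RHS increment $\tfrac{1}{4\sqrt\alpha}(\sqrt{\alpha^{r_s-1/2}/\blambda_s} - \sqrt{\alpha^{r_s-1}/\blambda_s})$ up to the slack factor absorbed by $1/(4\sqrt\alpha)$. When $\widehat T+1 \in \SsupT$, the new term $\tfrac{1}{4\sqrt\alpha}\sqrt{\alpha^{-1/2}/\blambda_{\widehat T+1}}$ is absorbed by the growth $\sqrt{A_{\widehat T+1}} - \sqrt{A_{\widehat T}}$, which I bound below via the quadratic identity $\blambda_{t+1}(a'_{t+1})^2 = A'_{t+1}$ from \cref{line:a-quandratic}.

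For \eqref{eq:increase-A-all}, I would convert $\sum_{t \in [T]} 1/\sqrt{\blambda_t}$ into a sum over down iterates: within any maximal up streak $t, t+1, \ldots, t+k-1$ the values $1/\sqrt{\blambda_t}$ form a geometric sequence with ratio $1/\sqrt\alpha$, so the streak total is bounded by $\sqrt\alpha/(\sqrt\alpha - 1) \cdot 1/\sqrt{\blambda_t}$, and its first term equals $\sqrt\alpha/\sqrt{\blambda_{t-1}}$ whenever $t - 1 \in \SdownT$. Telescoping across streaks yields $\sum_{t \in [T]} 1/\sqrt{\blambda_t} \le \tfrac{2\alpha}{\sqrt\alpha - 1} \sum_{t \in \SdownT} 1/\sqrt{\blambda_t}$ up to a negligible boundary term, and combining with \Cref{prop:main-potential} produces \eqref{eq:increase-A-all}. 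The main obstacle will be the precise bookkeeping for the down-to-up credit assignment in the first part, especially across long excursions where the $\log_\alpha \blambda_t$ walk breaks past prior minima; my expected fix is to let $\QT$ consist of the up iterates immediately following each successive running minimum of the walk, so that every down iterate has a unique anchor at the correct ladder distance.
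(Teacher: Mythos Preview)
Your approach to \eqref{eq:increase-A-all} is sound and essentially matches the paper's: bound each maximal up-streak by a geometric series anchored at the preceding down iterate, then combine with \Cref{prop:main-potential}. The constants work out.

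However, your inductive scheme for \eqref{eq:increase-A} has a genuine gap in the up step. When $\widehat T+1\in\SsupT[\widehat T+1]$, you propose to add $\widehat T+1$ to $\mathcal Q$ with $r_{\widehat T+1}=1/2$ and absorb the new RHS term $\tfrac{1}{4\sqrt\alpha}\sqrt{\alpha^{-1/2}/\blambda_{\widehat T+1}}$ by the growth $\sqrt{A_{\widehat T+1}}-\sqrt{A_{\widehat T}}$. But at an up iterate the damping mechanism gives $A_{\widehat T+1}=A_{\widehat T}+\gamma_{\widehat T+1}\,a'_{\widehat T+1}$, so
\[
\sqrt{A_{\widehat T+1}}-\sqrt{A_{\widehat T}}
\;\ge\; \frac{\gamma_{\widehat T+1}\,a'_{\widehat T+1}}{2\sqrt{A'_{\widehat T+1}}}
\;=\; \frac{\gamma_{\widehat T+1}}{2\sqrt{\blambda_{\widehat T+1}}},
\]
and $\gamma_{\widehat T+1}=\blambda_{\widehat T+1}/\lambda_{\widehat T+1}$ can be arbitrarily small (nothing in the algorithm or the movement bound prevents $\lambda_{\widehat T+1}\gg\blambda_{\widehat T+1}$). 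Thus the induction cannot close whenever a long up-run occurs with small $\gamma$'s. Your anchor rule for the down step has a related problem: the identity $\blambda_{\widehat T+1}=\alpha^{-r_s}\blambda_s$ generally has no solution in $\QTS$ (e.g., a sequence of several downs starting from $t=1$, or an up followed by a down at a higher level than any prior anchor), and your proposed fix via running minima does not obviously resolve this.

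The paper avoids both difficulties by a direct, non-inductive construction. It sets $\QTS$ to be the \emph{up-to-down transitions}---iterates $t\in\SsupT[\widehat T]$ with $t+1\in\SdownT[\widehat T]$---together with $\{1\}$ and (if up) $\widehat T$. The key structural observation is that between consecutive $\tau_i<\tau_{i+1}$ in $\QTS$ the pattern is necessarily a block of downs followed by a block of ups, with a unique ``bottom'' $n_i$. This yields two clean relations, $\blambda_{n_i}=\alpha^{2-(\tau_{i+1}-n_i)}\blambda_{\tau_{i+1}}$ and $\blambda_{n_i}=\alpha^{2-(n_i-\tau_i)}\blambda_{\tau_i}$, each of which converts a single term $1/\sqrt{\blambda_{n_i}}$ from the \Cref{prop:main-potential} bound into an exponentially boosted term at $\tau_{i+1}$ or $\tau_i$. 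Averaging the two resulting lower bounds on $\sqrt{A_{\widehat T}}$ via AM--GM produces \eqref{eq:increase-A} with $r_{\tau_i}=\tfrac12(n_i-n_{i-1})$, which telescopes to $\sum r_t=(\widehat T-1)/2$. Crucially, this never touches the (potentially negligible) growth of $A$ at up iterates; all credit flows from down iterates only.
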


\begin{proof}
We define $\QTS$ to be the set of ``up-down'' iterates, i.e., iterates $t$ for which $\lambda_{t} > \blambda_{t}$ but $\lambda_{t+1} \le \blambda_{t+1}$; we also add to $\QTS$ the first iterate and, if $\widehat{T}\in\SsupT[\widehat{T}]$, the iterate $\widehat{T}$. Formally, we have
\begin{align*}
	\QTS\defeq(\SsupT[\widehat{T}] \cap \{t\mid t+1\in\SdownT[\widehat{T}] \mbox{~~or~~} t=\widehat{T}\})\cup \{1\} .
\end{align*}
We let $1 = \tau_1< \tau_2<\cdots<\tau_S\le \widehat{T}$ denote the 
$S=|\QTS|$ distinct elements of
$\QTS$ in increasing order. For notational convenience, we also let
 $\tau_{S+1}\defeq \widehat{T}$. 

For every $i\in [S]$, we let $n_i$ be the index of the last ``down'' iterate between $\tau_{i}$ and $\tau_{i+1}$ (and $\widehat{T}$ if $i =S$), that is
\begin{equation}
	n_i \defeq \begin{cases}
 \max\{t\in \SdownT[\widehat{T}] |\tau_i \le t < \tau_{i+1}\}~&~\text{if}~i<S\\
\widehat{T} ~&~\text{otherwise}
 \end{cases}.
\end{equation}
As an immediate consequence of the definition of $\tau_i$ and $n_i$, we have for all $i<S$, $n_i\in [\tau_i,\tau_{i+1})$. We also have that the set of $n_i$ are distinct, i.e.\ $n_i \neq n_j$ for all $i,j \in [S]$ with $i \neq j$. 

Note that between any two ``up-down'' iterates $\tau_i$ and $\tau_{i+1}$ we have a sequence of ``down'' iterates (ending at $n_i$) followed by a sequence of ``up iterates'' (ending at $\tau_{i+1}$). In other words, for all $i<S$ and  $k\in(n_i,\tau_{i+1}]$ we have  $k\in \SsupT[\widehat{T}]$. Consequently, $\blambda_{k+1}=\alpha\blambda_{k}$ for all $k\in(n_i,\tau_{i+1})$ (since these are ``up'' iterates). Since $\blambda_{n_i+1}= \alpha^{-1}\blambda_{n_i}$ (because $n_i$ is a ``down'' iterate), we conclude that $\blambda_{n_i}=\alpha^{2-(\tau_{i+1}-n_i)}\blambda_{\tau_{i+1}}$. Combining this with~\Cref{prop:main-potential} implies the following lower bound on $ \sqrt{A_{\widehat{T}}}$:
\begin{align}
\label{eq:At_lower_1}
\sqrt{A_{\widehat{T}}}& \geq\frac{1}{2}\sum_{t\in\SdownT[\widehat{T}]}\frac{1}{\sqrt{\lambda_t'}} \ge \frac{1}{2}\sum_{i\in[S-1]}\frac{1}{\sqrt{\blambda_{n_i}}}\ge\frac{1}{2}\sum_{i\in[S-1]}\sqrt{\frac{\alpha^{\tau_{i+1}-n_i-2}}{\blambda_{\tau_{i+1}}}}.
\end{align}

Further, as argued above, for $k\in(\tau_i,n_i]$ we have $k\in\SdownT[\widehat{T}]$ for all and therefore $\blambda_{k+1} = \blambda_{k}/\alpha$. Consequently, when $\tau_i<\widehat{T}$ we have $\blambda_{n_i} = \alpha^{2-(n_i-\tau_i)}\blambda_{\tau_i}$. When $\tau_i=\widehat{T}$ the  inequality also holds since $\tau_i = n_i$. Together with the conclusion of~\Cref{prop:main-potential}, this implies the following lower bound on $ \sqrt{A_{\widehat{T}}}$:
\begin{align}
\label{eq:At_lower_2}	
\sqrt{A_{\widehat{T}}} & \geq \frac{1}{2}\sum_{t\in\SdownT[\widehat{T}]}\frac{1}{\sqrt{\lambda_t'}}\ge \frac{1}{2}\sum_{i\in[S]}\frac{1}{\sqrt{\blambda_{n_i}}}\ge \frac{1}{2}\sum_{i\in[S]}\sqrt{\frac{\alpha^{n_i-\tau_i-2}}{\blambda_{\tau_{i}}}}.
\end{align}

We now define $r_i$ as follows
\[r_{\tau_i}  = \begin{cases}
\half\prn*{n_1-1}~&~\text{if}~i=1\\
 	\half\prn*{n_i-n_{i-1}}~&~\text{if}~1<i\le S.
 \end{cases}
\]
Clearly we have $r_t\ge 0$ for all $t\in \QTS$  and $\sum_{t\in \QTS} r_{t} = \sum_{i\in[S]} r_{\tau_i} = \frac{\widehat{T}-1}{2}$, which proves \eqref{eq:increase-r}.

To show \eqref{eq:increase-A}, note that for any $\alpha\ge 1$, $\frac{1}{2}\sqrt{\alpha^a}+\frac{1}{2}\sqrt{\alpha^b}\ge \sqrt{\alpha^{\frac{1}{2}a+\frac{1}{2}b}}$ due to the arithmetic and geometric mean (AM-GM) inequality. Averaging our two lower bounds on $\sqrt{A_{\widehat{T}}}$, \eqref{eq:At_lower_1} and \eqref{eq:At_lower_2}, we conclude that 
\begin{align*}
\sqrt{A_{\widehat{T}}}\ge \frac{1}{4}\biggl(\sum_{i=2}^S\sqrt{\frac{\alpha^{\tau_{i}-n_{i-1}-2}}{\blambda_{\tau_{i}}}}+\sum_{i\in[S]}\sqrt{\frac{\alpha^{n_i-\tau_i-2}}{\blambda_{\tau_{i}}}}\biggr)\ge \frac{1}{4}\sum_{i\in[S]}\sqrt{\frac{\alpha^{r_{\tau_i}-2}}{\blambda_{\tau_i}}}=\frac{1}{4}\sum_{t\in \QTS}\sqrt{\frac{\alpha^{r_t-2}}{\blambda_{t}}}.
\end{align*}
Here the first term on the RHS bound comes purely from $\sqrt{\frac{\alpha^{n_i-\tau_i-2}}{\blambda_{\tau_{i}}}}$ when $i=1$ since $n_1-\tau_1>\frac{1}{2}\left(n_1-\tau_1\right) = \frac{1}{2}r_{\tau_1}$ which leads to the coefficient of $1/4$ on RHS.

Now for the consistency arguments, note by definition of $\mathcal{Q}$ and $r_{t}$ we have $\mathcal{Q}_{\widehat{T}_1}\subseteq \mathcal{Q}_{\widehat{T}_2}$ and $r_{t,\widehat{T}_1} = r_{t,\widehat{T}_2}$ for any $\widehat{T}_1<\widehat{T}_2\in\QTS$. 

To show the second inequality~\eqref{eq:increase-A-all}, we start again with $\widehat{T} = T$. From the conclusion of~\Cref{prop:main-potential} and the observation that $k\in\SdownT$ for any $k\in(\tau_i,n_i]$, giving 
\begin{align}
\label{eq:AT_lower_3}
\sqrt{A_T} & \ge \frac{1}{2}\sum_{t\in\SdownT}\frac{1}{\sqrt{\blambda_{t}}}
=
\frac{1}{2}\sum_{t=1}^{n_1}\sqrt{\frac{1}{\blambda_{t}}}+\frac{1}{2}\sum_{i=2}^{S}\sum_{t = \tau_i+1}^{n_i}\sqrt{\frac{1}{\blambda_{t}}}.
\end{align}

Moreover, since for $i<S$ and $k \in (n_i,\tau_{i+1}]$ we have $\blambda_{k} = \alpha^{(k-n_i-2)} \blambda_{n_i}$, and
\begin{align}
\label{eq:AT_lower_4}
\sum_{t\in(n_i,\tau_{i+1}]}\sqrt{\frac{1}{\blambda_t}}= \left(\sum_{j=1}^{\tau_{i+1}-n_i}\frac{\alpha}{\alpha^{j/2}}\right)\sqrt{\frac{1}{\blambda_{n_i}}}\le \frac{\alpha}{\sqrt{\alpha}-1}\sqrt{\frac{1}{\blambda_{n_i}}}.
\end{align} 
Combining \eqref{eq:AT_lower_3} and \eqref{eq:AT_lower_4} with $\sqrt{A_T} \geq \frac{1}{2}\sum_{i \in [S-1]}\sqrt{\frac{1}{\blambda_{n_i}}}$
 yields  \eqref{eq:increase-A-all} since
\begin{align*}
\sqrt{A_T} & \ge \frac{1}{4}\left(\sum_{t=1}^{n_1}\sqrt{\frac{1}{\blambda_{t}}}+\sum_{i=2}^{S}\sum_{t = \tau_i+1}^{n_i}\sqrt{\frac{1}{\blambda_{t}}}\right)+\frac{1}{4}\sum_{i=1}^{S-1}\sqrt{\frac{1}{\blambda_{n_i}}}\\
& \ge \frac{1}{4}\left(\sum_{t=1}^{n_1}\sqrt{\frac{1}{\blambda_{t}}}+\sum_{i=2}^{S}\sum_{t = \tau_i+1}^{n_i}\sqrt{\frac{1}{\blambda_{t}}}\right)+\frac{\sqrt{\alpha}-1}{4\alpha}\sum_{i=1}^{S-1}\sum_{t\in(n_i,\tau_{i+1}]}\sqrt{\frac{1}{\blambda_t}}\\
& \ge \frac{\sqrt{\alpha}-1}{4\alpha}\sum_{t\in[T]}\sqrt{\frac{1}{\blambda_t}}\,.
\end{align*}
\end{proof}

\subsection{Completing the proof of \Cref{thm:main}}
\label{sec:main_thm:proof}
We now show how to use~\Cref{prop:main-potential} and~\Cref{lem:increase} to obtain optimal  acceleration, considering the cases $s\in(1,\infty)$, $s=\infty$, and $s=1$ in turn.

\paragraph{The $s\in(1,\infty)$ case.} 

If $E_T\le 0$, the result $f(x_T) - f(\xopt) \le 0$ follows immediately. Therefore, it suffices to consider the case when $E_T>0$. 
For any $\widehat{T}\in\QT$, applying~\Cref{prop:main-potential} and~\Cref{lem:increase}
(using that movement bounds hold for all iterations in $\QT$ including the first iterate $t=1$ by assumption)
yields
\begin{equation}
	D_0\ge D_{0}-A_{\widehat{T}}E_{\widehat{T}}\geq\sum_{t\in \QTS}A_{t}\blambda_{t}(1-\sigma^{2})M_{t}\geq\frac{1-\sigma^2}{2}c^{-\frac{2s}{s-1}}\sum_{t\in \QTS}A_{t}\left(\blambda_{t}\right)^{\frac{s+1}{s-1}}\ge0.\label{eq:holder-fodder-high}
\end{equation}
This implies $E_{\widehat{T}}\le D_0/A_{\widehat{T}}$ where $\sqrt{A_{\widehat{T}}}\geq\frac{1}{4\sqrt{\alpha}}\sum_{t\in \QTS}\sqrt{\frac{\alpha^{r_{t}-1}}{\blambda_{t}}}$
for $\sum_{t\in \QTS}r_{t}=\frac{\widehat{T}-1}{2}$. 

The reverse \Holder inequality (which is a standard technique in analyzing MS acceleration~\cite{gasnikov2019optimal, bubeck2019optimal, jiang2019optimal,song2021unified,alves2021variants}) states that, for all $q>1$, and any two vectors $u,v$ with positive elements,
\begin{equation*}
	\sum_i u_i v_i \ge \prn*{ \sum_i u_i^{1/q} } ^q \prn*{ \sum_i v_i^{1/(1-q)} } ^{1-q}.
\end{equation*}
We set $q=\frac{3s+1}{2(s+1)}$ and apply the reverse \Holder inequality to obtain
\begin{align*}
4\sqrt{\alpha A_{\widehat{T}}} & \geq\sum_{t\in \QTS}\sqrt{\frac{\alpha^{r_{t}-1}}{\blambda_{t}}}=\sum_{t\in \QTS}\left(A_{t}^{q-1}\sqrt{\alpha^{r_{t}-1}}\right)\left(\frac{A_{t}^{1-q}}{\sqrt{\blambda_{t}}}\right)\\
 & \overge{(i)}\left(\sum_{t\in \QTS}A_{t}^{1-\frac{1}{q}}\alpha^{\frac{r_{t}-1}{2q}}\right)^{q}\left(\sum_{t\in \QTS}A_{t}\left(\blambda_{t}\right)^{\frac{s+1}{s-1}}\right)^{1-q}\\
 & \overge{(ii)}\left(\sum_{t\in \QTS}A_{t}^{1-\frac{1}{q}}\left(\alpha^{\frac{1}{2q}}\right)^{r_{t}-1}\right)^{q}\left(\frac{2}{1-\sigma^2}D_{0}c^{\frac{2s}{s-1}}\right)^{1-q}\\
 & \overge{(iii)}\left(\sum_{t\in \QTS}A_{t}^{1-\frac{1}{q}}r_{t}\cdot c_{\alpha,q}\right)^{q}\left(\frac{2D_{0}}{(1-\sigma^2)c^{-\frac{2s}{s-1}}}\right)^{1-q}~~\text{for}~~c_{\alpha,q}\defeq\min\left(1,\frac{1}{2q}\ln\alpha\right) \numberthis\label{eq:holder-upshot-high}
\end{align*}
where we used $(i)$ the reverse \Holder inequality with $u_t = A_t^{q-1}\sqrt{\alpha^{r_t-1}}$ and $v_t = A_t^{1-q}/\sqrt{\blambda_t}$ (for $t\in\QTS$) and $-\half \cdot \frac{1}{1-q} = \frac{s+1}{s-1}$, $(ii)$ the bound~\eqref{eq:holder-fodder-high}, and $(iii)$ the following lemma (proved in the next subsection) with $a\gets\alpha^{1/2q}$ 
and $b\gets r_t\ge0$. 

\begin{restatable}{lemma}{restateLemExpToLinear}\label{lem:exp-to-linear}
	For all $a\geq0$
	and $b\geq1$, we have $a^{b-1}\geq\min\{1,\ln a\}\cdot b$.
\end{restatable}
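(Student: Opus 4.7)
The plan is a short case analysis on whether $a \leq 1$ or $a > 1$, reducing the claim in each case to a one-variable calculus exercise in $b$.

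First I would dispose of the regime $a \leq 1$. Here $\ln a \leq 0$, so $\min\{1,\ln a\} \leq 0$ and the right-hand side $\min\{1,\ln a\}\cdot b$ is non-positive for $b\geq 1$. Since $a^{b-1}\geq 0$ (with the convention $0^0=1$ in the corner case $a=0$, $b=1$), the inequality holds trivially in this regime.

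The substantive case is $a > 1$. I would set $c\defeq \min\{1,\ln a\} \in (0,1]$ and define $h(b) \defeq a^{b-1} - c\,b$ on $[1,\infty)$. Two observations suffice: (i) $h(1) = 1 - c \geq 0$, since $c \leq 1$; and (ii) $h'(b) = a^{b-1}\ln a - c \geq \ln a - c \geq 0$ for all $b \geq 1$, using that $a^{b-1}\geq 1$ (because $a>1$ and $b\geq 1$) and $c\leq \ln a$ (by definition of the $\min$). Thus $h$ is non-decreasing on $[1,\infty)$, so $h(b) \geq h(1) \geq 0$, yielding $a^{b-1} \geq c\,b$, which is exactly the claim.

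There is essentially no real obstacle here: the whole argument is driven by the observation that the choice $c = \min\{1,\ln a\}$ is precisely what makes both $h(1)\geq 0$ (which requires $c\leq 1$) and $h'(b)\geq 0$ on $[1,\infty)$ (which requires $c\leq \ln a$) hold simultaneously. Either upper bound on $c$ in isolation would be weaker than the $\min$, so in a sense the lemma is tight at $b=1$ and $a=e$, where both constraints bind.
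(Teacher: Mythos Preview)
Your proof is correct and follows essentially the same argument as the paper: define the difference $h(b)=a^{b-1}-\min\{1,\ln a\}\cdot b$, check $h(1)\ge 0$, and verify $h'(b)\ge 0$ on $[1,\infty)$. The only cosmetic difference is that you split into the cases $a\le 1$ and $a>1$ (dispatching the former trivially), whereas the paper treats both cases at once by directly asserting $f'(x)=(\ln a)a^{x-1}-\min\{1,\ln a\}\ge 0$ for all $x\ge 1$; your explicit split arguably makes the verification cleaner.
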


Substituting the definitions 
\begin{equation*}
	B_{t}\defeq  A_{t}^{1-\frac{1}{q}}~~\mbox{and}~~\beta\defeq c_{\alpha,q}\left(\frac{1}{4\sqrt{\alpha}}\right)^{\frac{1}{q}}\left(\frac{2D_{0}}{(1-\sigma^2)c^{-\frac{2s}{s-1}}}\right)^{\frac{1-q}{q}},
\end{equation*}
the bound~\eqref{eq:holder-upshot-high} can be rewritten as
\[
B_{\widehat{T}}^{\frac{s+1}{s-1}}=B_{\widehat{T}}^{\frac{1}{2(1-q)}}\geq\beta\cdot\sum_{\tau\in \QTS}B_{\tau}\cdot r_\tau~~~\text{for all}~~~\widehat{T}\in\QT. 
\]

To deduce the growth rate of $B_t$, we give the following lemma generalizing the analyses in prior work \cite{bubeck2019optimal, jiang2019optimal} (see proof in the next \Cref{sec:main_thm:helper}).
\begin{restatable}{lemma}{restateLemGrowthRateHigh}\label{lem:growth-rate-high}
	Let $B_{1},...,B_{k}\in\R_{>0}$, $r_{1},...,r_{k}\in\R_{\ge 0}$ and $\beta>0$. \yair{I am adding $\beta$ back in for consistency with the other lemma and to avoid unannounced notation overloading} Further, suppose that for some
 $m>1$\ and all $i\in[k]$ it is the case that $B_{i}^{m}\geq \beta \sum_{j\in[i]}B_{j}\cdot r_{j}$.
	Then for all $i\in[k]$ we have that $B_{i}\geq\left(\frac{m-1}{m} \beta\cdot \sum_{j\in[i]} r_{j}\right)^{1/(m-1)}$.
\end{restatable}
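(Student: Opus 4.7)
The plan is to leverage a discrete-to-continuous comparison, viewing the sequence $S_i := \sum_{j\le i} B_j r_j$ as a Riemann sum for a function satisfying a differential inequality of the form $(dS/dR)^m \ge \beta S$. First, I would use the hypothesis $B_i^m \ge \beta S_i$ together with the identity $S_i - S_{i-1} = B_i r_i$ to write
\begin{equation*}
  \frac{S_i - S_{i-1}}{S_i^{1/m}} \;=\; \frac{B_i r_i}{S_i^{1/m}} \;\ge\; \frac{(\beta S_i)^{1/m} r_i}{S_i^{1/m}} \;=\; \beta^{1/m}\, r_i,
\end{equation*}
so that the cumulative increment of $S_j^{1-1/m}$ (up to a constant) grows at least as fast as $\beta^{1/m} R_i$.

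Next, I would convert the left-hand sum into a telescoping bound via the elementary inequality $\frac{b-a}{b^{1/m}} \le \int_a^b s^{-1/m}\, ds$, valid for $0 \le a < b$ because $s \mapsto s^{-1/m}$ is decreasing on $(0,\infty)$. Summing over $j \le i$ and using that the integrand is integrable at $0$ (since $m > 1$) yields
\begin{equation*}
  \beta^{1/m} R_i \;\le\; \sum_{j \le i}\frac{S_j - S_{j-1}}{S_j^{1/m}} \;\le\; \int_0^{S_i} s^{-1/m}\, ds \;=\; \frac{m}{m-1}\, S_i^{(m-1)/m}.
\end{equation*}
Rearranging gives $S_i \ge \bigl(\tfrac{m-1}{m}\,\beta^{1/m} R_i\bigr)^{m/(m-1)}$, and substituting back into $B_i^m \ge \beta S_i$ yields exactly $B_i \ge \bigl(\tfrac{m-1}{m}\,\beta\, R_i\bigr)^{1/(m-1)}$.

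The main obstacle is the handling of the boundary case $S_0 = 0$, where the integrand $s^{-1/m}$ blows up. I would treat this by directly verifying that $\frac{b-a}{b^{1/m}} \le \int_a^b s^{-1/m}\,ds$ extends to $a = 0$ (since $\int_0^b s^{-1/m} ds = \tfrac{m}{m-1} b^{(m-1)/m}$ is finite for $m > 1$, and clearly dominates $b^{(m-1)/m}$); degenerate cases where $r_j = 0$ contribute zero to both sides of the key inequality and can be ignored. A minor secondary concern is that indices with $r_i = 0$ give only the trivial bound on the conclusion via monotonicity of $R_i$, but this is consistent with the claim.
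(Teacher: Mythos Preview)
Your proposal is correct and follows essentially the same approach as the paper: both arguments set $S_i=\sum_{j\le i}B_jr_j$, derive $\tfrac{S_i-S_{i-1}}{S_i^{1/m}}\ge \beta^{1/m}r_i$ from the hypothesis, bound the left-hand side by $\int_{S_{i-1}}^{S_i}s^{-1/m}\,ds$ using monotonicity of $s^{-1/m}$, telescope to get $\beta^{1/m}R_i\le \tfrac{m}{m-1}S_i^{(m-1)/m}$, and substitute back into $B_i^m\ge\beta S_i$. The only cosmetic differences are that the paper normalizes $\beta=1$ at the outset and handles the degenerate initial $r_j=0$ case by explicitly shifting the starting index, whereas you carry $\beta$ through and absorb the boundary $S_0=0$ directly into the integral (which is fine since $s^{-1/m}$ is integrable at zero for $m>1$).
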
 
Applying the lemma with $m=\frac{s+1}{s-1}$ and recalling that $ \sum_{t\in\QT} r_t = \frac{T-1}{2}$, we obtain (for $T'=\max \QT$), 
\[
B_{T}\geq B_{T'}\ge\left(\frac{\frac{s+1}{s-1}-1}{\frac{s+1}{s-1}}\cdot\beta\cdot\sum_{t\in \QT}r_{t}\right)^{\frac{1}{\frac{s+1}{s-1}-1}}=\left(\frac{2}{s+1}\cdot\beta\cdot\frac{T-1}{2}\right)^{\frac{s-1}{2}}~~\text{for any}~T\in\Z_{>0}.
\]
Since $A_{T}=B_{T}^{\frac{q}{q-1}}$and $\frac{q}{q-1}=\frac{3s+1}{s-1}$
this gives the desired growth rate of
\begin{align*}
A_T & \ge \left(\frac{2}{s+1}\cdot\beta\cdot\frac{T-1}{2}\right)^{\frac{3s+1}{2}}~~\text{for any}~T\in\Z_{>0},
\end{align*}
where, substituting back, we have %
\begin{equation*}
	\beta = \min\left(1,\frac{s+1}{3s+1}\ln\alpha\right)\cdot \left(\frac{1}{4\sqrt{\alpha}}\right)^{\frac{2s+2}{3s+1}}c^{-\frac{2s}{3s+1}}\left(\frac{2}{(1-\sigma^2)}D_{0}\right)^{-\frac{s-1}{3s+1}}.
\end{equation*}
Thus, for 
\[T= \Omega\left(\frac{\left(\frac{\frac{1}{2}\norm{x_0-x_\star}^2}{\epsilon}\right)^{\frac{2}{3s+1}}}{\frac{1}{s+1}\beta}\right) = \Omega\left(\frac{\alpha^{\frac{s+1}{3s+1}}}{\min\left(1,\frac{1}{s}\ln \alpha\right)}\cdot\prn*{\frac{c^{s} \norm{x_0 - \xopt}^{s+1}}{\epsilon}}^{\frac{2}{3s+1}}\right),\] 
we have $A_T\ge \frac{1}{2}\norm{x_0-x_\star}^2/\epsilon$, and consequently
\begin{align*}
f(x_T)-f(x_\star) = E_T \le \frac{\frac{1}{2}\norm{x_0-x_\star}^2}{A_T} \le \epsilon.
\end{align*}
The case for $s\in(1,\infty)$ follows immediately. %
	
\paragraph{The $s=\infty$ case.} Considering $s=\infty$ and $q=\frac{3}{2}$ in~\eqref{eq:holder-upshot-high} yields for any $T\in\Z_{>0}$ and $\widehat{T}\in\QT$,
\begin{equation*}
	4\sqrt{\alpha A_{\widehat{T}}} \ge 
	\left(\sum_{t\in \QTS}A_{t}^{\frac{1}{3}}\min\left(1,\frac{1}{3}\ln\alpha\right)\cdot r_{t}\right)^{\frac{3}{2}}\left(\frac{2c^2D_{0}}{1-\sigma^2}\right)^{-\frac{1}{2}}.
\end{equation*}
Defining 
\begin{equation*}
	B_t \defeq A_t^{1/3}~~\mbox{and}~~\beta \defeq \min\{1,\tfrac{1}{3}\ln\alpha\}\cdot \left(2^5c^2\alpha D_0/(1-\sigma^2)\right)^{-1/3}
\end{equation*}
we have
\[
B_{\widehat{T}} \ge \beta \sum_{\tau\in \QTS} B_\tau\cdot r_\tau,~~\text{for all}~~\widehat{T}\in \QT.
\]

We deduce an exponential rate of growth for $B_t$ using the following lemma, inspired by the analysis in~\cite{carmon2020acceleration} (and proved in the next subsection). 
\begin{restatable}{lemma}{restateLemGrowthRateBall}\label{lem:growth-rate-ball}
	Let $B_{1},...,B_{k}\in\R_{>0}$ be
	non-decreasing and let $r_{1},...,r_{k}\in\R_{\ge0}$ and $R_{i}\defeq\sum_{j\in[i]}r_{j}$
	for $i\in[k]$. Further, suppose that for some $\beta>0$, and all $i\in[k]$ it is the case that $B_{i}\geq\beta\cdot\sum_{j\in[i]}B_{j}\cdot r_{j}$.
	Then $B_{i}\geq\exp(\beta R_{i}-1)B_{1}$ for
	all $i\in[k]$.
\end{restatable}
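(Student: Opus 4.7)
The plan is to prove the lemma by strong induction on $i$. A preliminary observation: keeping only the $j=i$ term of the recursion $B_i \geq \beta \sum_{j \in [i]} B_j r_j$ yields $B_i \geq \beta B_i r_i$, which, since $B_i > 0$, gives $\beta r_i \leq 1$ for every $i$. The base case $i = 1$ then reduces to $B_1 \geq B_1 e^{\beta r_1 - 1}$, which follows from $\beta r_1 \leq 1$.

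For the inductive step, assume $B_j \geq B_1 e^{\beta R_j - 1}$ for all $j < i$. I would split on the size of $\beta R_i$. When $\beta R_i \leq 1$, the target $B_1 e^{\beta R_i - 1}$ is at most $B_1$, and the claim follows immediately from the non-decreasing assumption $B_i \geq B_1$. When $\beta R_i > 1$, I would isolate the $i$-th term of the recursion to obtain
\[
B_i (1 - \beta r_i) \geq \beta \sum_{j < i} B_j r_j,
\]
with $\beta r_i < 1$ strict (since $\beta r_i = 1$ forces the right-hand side to be zero, hence $R_{i-1} = 0$, contradicting $\beta R_i > 1$).

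The key step is to lower bound $\sum_{j < i} B_j r_j$ using the inductive hypothesis \emph{together with} the non-decreasing property. I would view the sum as the integral $\int_0^{R_{i-1}} B(s)\, ds$ for the step function $B(s) = B_j$ on $s \in (R_{j-1}, R_j]$, and observe that $B(s) \geq B_1 \max(1, e^{\beta s - 1})$: the floor $B_1$ comes from non-decreasing, while $B(s) = B_j \geq B_1 e^{\beta R_j - 1} \geq B_1 e^{\beta s - 1}$ (using $s \leq R_j$) comes from the inductive hypothesis. Direct computation shows $\int_0^{R_{i-1}} \max(1, e^{\beta s - 1})\, ds$ equals $R_{i-1}$ if $\beta R_{i-1} \leq 1$ and $e^{\beta R_{i-1} - 1}/\beta$ otherwise. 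In the sub-case $\beta R_{i-1} \geq 1$ the conclusion is immediate via the elementary inequality $1/(1 - x) \geq e^x$ on $[0, 1)$ applied with $x = \beta r_i$, giving $B_i \geq B_1 e^{\beta R_{i-1} - 1} e^{\beta r_i} = B_1 e^{\beta R_i - 1}$.

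The main obstacle---and what forces the non-decreasing hypothesis---is the remaining sub-case $\beta R_{i-1} < 1 < \beta R_i$, where the bound reduces to verifying $x / (1 - y) \geq e^{x + y - 1}$ for $x := \beta R_{i-1}$, $y := \beta r_i$ with $x, y \in [0, 1)$ and $x + y > 1$. I would rewrite this as $g(x) \geq g(1 - y)$ where $g(t) := t e^{1 - t}$ satisfies $g'(t) = (1 - t) e^{1-t} \geq 0$ on $[0, 1]$; since $x \geq 1 - y$ is exactly $x + y \geq 1$, the inequality follows. A small sanity check confirms that dropping the non-decreasing hypothesis admits two-point counterexamples violating the exponential conclusion, so the combination of the inductive hypothesis with the $B_j \geq B_1$ floor is genuinely necessary in this final case.
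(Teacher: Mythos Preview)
Your proof is correct and takes a genuinely different route from the paper's. The paper proceeds non-inductively: it defines $k_0$ as the last index with $\beta R_{k_0} < 1$, handles $i \le k_0$ trivially via monotonicity (as you do), and for $i > k_0$ runs a discrete Gr\"onwall-type argument on the partial sums $S_j := \beta \sum_{j' \le j} B_{j'} r_{j'}$. The hypothesis gives $S_{j+1} - S_j = \beta B_{j+1} r_{j+1} \ge \beta r_{j+1} S_{j+1}$, whence $\beta r_{j+1} \le \int_{S_j}^{S_{j+1}} t^{-1}\,dt = \log(S_{j+1}/S_j)$; summing from $k_0+1$ to $i$ telescopes to $\beta(R_i - R_{k_0}) \le \log(S_i/B_1) \le \log(B_i/B_1)$. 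Your approach instead bootstraps by strong induction, lower-bounding the step function $B(s)$ simultaneously by the constant $B_1$ (monotonicity) and by $B_1 e^{\beta s - 1}$ (inductive hypothesis), then integrating the pointwise maximum. This is arguably more elementary---no telescoping over partial sums---but trades that for the extra sub-case $\beta R_{i-1} < 1 < \beta R_i$ and the neat observation that $g(t) = t e^{1-t}$ is increasing on $[0,1]$. Both proofs use the non-decreasing hypothesis precisely in the ``small $R$'' regime, and your remark that the hypothesis cannot be dropped is well taken.
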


Applying the lemma and substituting back the definition of $B_t$, we obtain,
\begin{align*}
A_T^{1/3} & \ge \exp\left(\beta \sum_{t\in \QT}r_t-1\right)A_1^{1/3} = \exp\left(\beta\cdot\frac{T-1}{2}-1\right)A_1^{1/3},
\end{align*}
where we let $\beta\defeq \min\left(1,\frac{1}{3}\ln\alpha\right)\cdot \left(\frac{2^5c^2\alpha D_0}{(1-\sigma^2)}\right)^{-\frac{1}{3}}.$

Thus, for 
\[T = \Omega\left(\frac{\log \frac{\norm{x_0-x_\star}^2}{\epsilon A_1}}{\beta}\right) = \Omega\left(\frac{\alpha^{\frac{1}{3}}}{\min\left(1,\frac{1}{3}\ln \alpha\right)}\cdot\prn*{c \norm{x_0 - \xopt}}^{\frac{2}{3}}\log \frac{\norm{x_0 - \xopt}^2}{\epsilon A_1}\right),\] 
we have $A_T\ge \frac{1}{2}\norm{x_0-x_\star}^2/\epsilon$, and consequently
\begin{align*}
f(x_T)-f(x_\star) = E_T \le \frac{\frac{1}{2}\norm{x_0-x_\star}^2}{A_T} \le \epsilon,	
\end{align*}
which proves the case for $s=\infty$.

\paragraph{The $s=1$ case.} This case corresponds to the standard analysis of Nesterov acceleration. The $(1,c)$-movement bound guarantees that $\lambda_t \le c$ for all $t$. Recalling that $\blambda_t \le \lambda_t$ for all $t\in\QT$, the bound~\eqref{eq:increase-A}, yields
\begin{equation*}
	4\sqrt{\alpha A_T} \ge \sum_{t\in \QT} \sqrt{\frac{\alpha^{r_t-1}}{c}}
	\ge \frac{\min\crl*{1,\half\ln \alpha}}{\sqrt{c}}  \sum_{t\in \QT}  r_t 
	=  \frac{\min\crl*{1, \half\ln \alpha}}{\sqrt{c}} \cdot \frac{T-1}{2},
\end{equation*}
where the final bound uses \Cref{lem:increase}. Consequently, the error bound~\eqref{eq:main-potential-implied} we have
\begin{equation*}
	E_T \le \frac{D_0}{A_T} = O\prn*{
		\frac{\alpha c D_0}{ \min\crl*{1, \ln^2 \alpha} T^2 }
	},
\end{equation*}
yielding the claimed result for $s=1$.

\subsection{Helper lemmas}
\label{sec:main_thm:helper}

\restateLemExpToLinear*
\begin{proof}
	Define the difference function $f(x)\defeq a^{x-1}-\min\left(1,\ln a\right)\cdot x$. We note that
	clearly $f(1)\geq0$ and the first-order derivative $f'(x)=(\ln a)\cdot a^{x-1}-\min\left(1,\ln a\right)\geq0$
	for all $x\geq1$. Consequently, by the integral formula $f(x) = \int_1^x f'(z) dz$ we have that $f(x) \geq 0$ for all $x \geq 1$. 	
\end{proof}

\restateLemGrowthRateHigh*

\begin{proof}
	Without loss of generality we take $\beta=1$, since otherwise we may redefine $r_i$ to be $\beta r_i$. Furthermore, we assume $r_1>0$ as otherwise we can divide into the following two cases:
	\begin{enumerate}
		\item if all $r_i=0$ the desired inequality naively holds;
		\item if there exists some $i_0$ such that $r_{i_0}>0$ and $r_{i}=0$ for all  $i<i_0$, then it suffices to consider the sequence starting from $i_0$. 
	\end{enumerate}
	
First, for $i=1$, note
	that $B_{1}^{m}\geq B_{1}\cdot r_{1}$ and consequently
	$B_{1}\geq(r_{1})^{1/(m-1)}\geq(\frac{m-1}{m}\cdot r_{1})^{1/(m-1)}$
	as desired. 
	
Next, for any $j>1$, we have
	\begin{align*}
	\sum_{j'\in[j+1]}B_{j'} r_{j'} -\sum_{j'\in[j]}B_{j'} r_{j'}= B_{j+1} r_{j+1} \ge \left(\sum_{j'\in[j+1]}B_{j'} r_{j'}\right)^{1/m}r_{j+1},
	\end{align*}
and consequently
\begin{align*}
r_{j+1} &\le \frac{\sum_{j'\in[j+1]}B_{j'} r_{j'}-\sum_{j'\in[j]}B_{j'} r_{j'}}{\left(\sum_{j'\in[j+1]}B_{j'}r_{j'}\right)^{1/m}}	 \le \int_{\sum\limits_{j'\in[j]}B_{j'} r_{j'}}^{\sum\limits_{j'\in[j+1]}B_{j'} r_{j'}}\frac{1}{t^{1/m}}dt\\
& = \frac{m}{m-1}\left(\left(\sum_{j'\in[j+1]}B_{j'} r_{j'}\right)^{\frac{m-1}{m}}-\left(\sum_{j'\in[j]}B_{j'} r_{j'}\right)^{\frac{m-1}{m}}\right).
\end{align*}

Summing the above inequality for all $j\in[i]$, noting that for $i=1$ we have $r_1\le \frac{m}{m-1}\left(B_1 r_1\right)^{\frac{m-1}{m}}$, and rearranging terms yields
\[
\left(\sum_{j\in[i+1]}B_{j} r_{j}\right)^{\frac{m-1}{m}}\ge \frac{m-1}{m}\sum_{j\in[i+1]}r_{j}.
\]
Combining this with the condition $B_{i+1}^m\ge \sum_{j\in[i+1]}B_j r_j$ concludes the proof.
\end{proof}

\restateLemGrowthRateBall*

\begin{proof}
	Let $k_{0}$ denote the largest element of $[k]$ for which $R_{k_{0}}<\beta^{-1}$
	and let $k_{0}=0$ if there is no such element. Note that $\exp(\beta (R_{k_{0}}-\beta^{-1})\leq1$.
	Since $B_{k}$ increases monotonically in $k$ this implies that $B_{i}\geq\exp\left(\beta(R_{i}-\beta^{-1})\right)B_{1}$
	for all $i\in[k_{0}]$.

	For $i =k_0+1$, we have 
	\begin{align*}
	\sum_{j'\in[i]}\beta B_{j'} r_{j'} -B_{1} \ge \sum_{j'\in[i]}\beta B_{j'} r_{j'} -\sum_{j'\in[i-1]}\beta B_{j'} r_{j'}= \beta\cdot B_{i} r_{i} & \ge \left(\sum_{j'\in[i]}\beta B_{j'} r_{j'}\right)\beta r_i,
	\end{align*}
	where the first inequality is due to the definition of $k_0$ and that $B_i$ is non-increasing.
	Consequently, 
\begin{align*}
\beta\cdot r_{k_0+1}\le \frac{\sum_{j'\in[k_0+1]}\beta\cdot B_{j'} r_{j'}-B_{1} }{\left(\beta\cdot\sum_{j'\in[k_0+1]}B_{j'}r_{j'}\right)}	& \le \int_{ B_{1} }^{\sum\limits_{j'\in[k_0+1]} \beta\cdot B_{j'} r_{j'}}\frac{1}{t}~dt\\
& = \log\left(\frac{\sum\limits_{j'\in[k_0+1]}\beta\cdot B_{j'} r_{j'}}{B_{1}}\right).
\end{align*}

	For any $i+1$ such that $2\le i+1\le k$ and $j\in[i]$, we have 
\begin{align*}
	\sum_{j'\in[j+1]}\beta B_{j'} r_{j'} -\sum_{j'\in[j]}\beta B_{j'} r_{j'}= \beta\cdot B_{j+1} r_{j+1} \ge \left(\sum_{j'\in[j+1]}\beta B_{j'} r_{j'}\right)\beta r_{j+1},
	\end{align*}
and consequently
\begin{align*}
\beta\cdot r_{j+1}\le \frac{\sum_{j'\in[j+1]}\beta\cdot B_{j'} r_{j'}-\sum_{j'\in[j]}\beta\cdot B_{j'} r_{j'}}{\left(\beta\cdot\sum_{j'\in[j+1]}B_{j'}r_{j'}\right)}	& \le \int_{\sum\limits_{j'\in[j]}\beta\cdot B_{j'} r_{j'}}^{\sum\limits_{j'\in[j+1]}\beta\cdot B_{j'} r_{j'}}\frac{1}{t}~dt\\
& = \log\left(\frac{\beta\cdot\sum\limits_{j'\in[j+1]}B_{j'} r_{j'}}{\beta\cdot\sum\limits_{j'\in[j]}B_{j'} r_{j'}}\right).
\end{align*}

Summing up above inequalities yields that for any $i \in [k_0 +1, k]$ it is the case that
\begin{align*}
\beta \sum_{j = k_0 +1}^{i} r_j \leq
\log \left(\frac{1}{B_1} \sum_{j \in [i + 1]} B_i r_i \right)
 \leq \log \left(\frac{B_i}{B_1} \right)
\end{align*}
Since $\beta \sum_{j = k_0 +1}^{i} r_j = \beta (R_i -  R_{k_0}) \geq \beta(R_i - \beta^{-1})$, where we define $R_{0} \defeq 0$, we obtain that $B_{i}\geq\exp\left(\beta R_{i}-1\right)B_{1}$ holds for all $i > k_0$, and hence for all $i\ge 1$.
\end{proof}
\section{Generalized oracle notions}\label{app:gen-framework}

In this section, we consider a setting where the convex function $f$ may be non-differentiable (i.e., $\grad f$ might not exist everywhere) and the problem may be constrained (i.e., the convex closed domain $\xset$ may be different from $\R^d$). We consider two types of oracles: a slight generalization of the MS oracle for the non-differentiable and/or constrained setting, and a fairly different notion of a ``stochastic proximal oracle'' similar to the one considered in~\cite{asi2021stochastic}.
We also provide a slight variation of \Cref{alg:optms} that makes use of these oracles and also allows more flexibility in choosing some of the iterates, and prove convergence rate bounds for this algorithm combined with either oracle.

To generalize \Cref{def:ms-condition} of a MS oracle, we consider mappings that return, in addition to $x\in\xset$ and $\lambda>0$, a vector $g\in\R^d$ that replaces $\grad f(x)$. More specifically, recall the definition of the subdifferential of $f$ at $x\in\xset$:
\begin{equation*}
	\del f(x) \defeq \{ g \in \R^d \mid \inner{g}{x'-x} \le f(x') - f(x)~~\mbox{for all}~x'\in\xset \}.
\end{equation*}
An element of $\del f(x)$ is called a subgradient of $f$ at $x$. Note that when $f$ is differentiable at $x$ we have $\grad f(x) \in \del f(x)$. However, on the boundary of $\xset$ there may additional elements in the subdifferential even when $f$ is differentiable. Our  generalized MS oracle (that originally appeared in~\cite{MonteiroS13a}) returns $g$, a subgradient of $f$ at $x$, such that the MS condition holds with $g$ instead of $\grad f(x)$. 

\begin{definition}[Generalized MS oracle]\label{def:ms-oracle-gen}
	An oracle $\MSoracle: \xset \times \R_+ \to \xset \times \R^d \times \R_+$ is a \emph{$\sigma$-Generalized MS} oracle for function $f:\xset\to\R$ if for every $y\in\xset$  and $\blambda>0$, the points  $(x,g,\lambda)=\MSoracle(y;\blambda)$ satisfy the following:
	\begin{equation}\label{eq:ms-condition-gen}
		g \in \del f(x) ~~\mbox{and}~~ 
		\norm*{ x - \prn*{y-\tfrac{1}{\lambda}g} } \le \sigma \norm{x-y}.%
	\end{equation}
\end{definition}

We remark that considering subgradients instead of gradients is essential for handling constrained optimization even when $f$ is differentiable, because even exact proximal point do not necessarily satisfy the simple MS condition~\eqref{eq:ms-condition}. That is, letting $F_\lambda(x) = f(x) + \frac{\lambda}{2}\norm{x-y}^2$, the point $x_\lambda = \argmin_{x\in\xset} F_\lambda(x)$ does not necessarily satisfy $x_\lambda = y - \frac{1}{\lambda}\grad f(x_\lambda)$. Nevertheless, the first-order optimality conditions of characterizing $x_\lambda$ guarantee that  $\lambda(y-x_\lambda) \in \del f(x_\lambda)$. Therefore, exact proximal points are $0$-Generalized MS oracles. 

We now present a different oracle, with a probabilistic approximation condition that relates directly to the exact proximal point $x_\lambda$. The advantage of approximation conditions of this kind is that they can be efficiently satisfied using stochastic first-order methods in certain non-smooth problems where certifying~\eqref{eq:ms-condition-gen} is hard~\cite{asi2021stochastic}. 

\begin{definition}[Stochastic proximal oracle]\label{def:stoch-prox-oracle}
	A (randomized) oracle $\MSoracle: \xset \times \R_+ \to \xset \times \R^d \times \R_+$ is a \emph{$\sigma$-stochastic proximal oracle}  for function $f:\R^d\to\R$ if for every $y\in\xset$  and $\blambda>0$, the points $(x,g,\lambda)=\MSoracle(y;\blambda)$ satisfy the following:
	\begin{equation}\label{eq:stoch-prox-condition}
		\E F_\lambda(x) \le \min_{x'\in\xset} F_\lambda(x') + \frac{\lambda\sigma^2}{4}\E\norm{x-y}^2~~,~~\E g = g_\lambda~~\mbox{and}~~\Var(g) \le \frac{\sigma^2}{2}\E\norm{x-y}^2,
	\end{equation}
	where $F_\lambda(x') \defeq f(x') + \frac{\lambda}{2}\norm{x'-y}^2$,  $x_\lambda = \argmin_{x'\in\xset} F_\lambda(x')$ and $g_\lambda = \lambda(y-x_\lambda)$, and all expectations are conditional on $y,\blambda$ and $\lambda$. 
\end{definition}

Three remarks are in order. First, note that exact proximal points are also $0$-stochastic proximal oracles. Second, the condition $\E g = g_\lambda = \lambda(y-x_\lambda)$ implies that if the stochastic proximal oracle outputs a deterministic $g$ then it also computes $x_\lambda$ exactly. Third, a $\sigma$-Generalized MS oracle output $x,g,\lambda$ satisfies $g + \lambda(x-y) \in \del F_\lambda(x)$ and therefore, by $\lambda$-strong convexity of $F_\lambda$, 
\begin{equation*}
	F_\lambda(x) -\min_{x'\in\xset} F_\lambda(x') \le \frac{1}{2\lambda} \norm{g + \lambda(x-y)}^2 \le \frac{\lambda\sigma^2}{2}\norm{x-y}^2.
\end{equation*}
Therefore, up to a replacing $\sigma$ with $\sigma/\sqrt{2}$, the generalized MS-condition~\eqref{eq:ms-condition-gen} implies the first part of~\eqref{eq:stoch-prox-condition}. Moreover, when $g$ is deterministic its variance is zero, giving the third part of the condition. The second part of the condition, however, is not directly implied by~\eqref{eq:ms-condition-gen}. Nevertheless, given a procedure that for any $\delta \ge 0$ outputs a point $x^\delta$ such that $F_\lambda(x^\delta) -\min_{x'\in\xset} F_\lambda(x') \le \frac{\lambda\delta^2}{2}$ (e.g., an $\sigma$-MS oracle with appropriate value of $\sigma$), it is possible to generically obtain an estimator $\hat{x}_\lambda$ that is unbiased for  $x_\lambda$ via multilevel Monte Carlo~\cite[see][]{asi2021stochastic,carmon2022distributionally}, thereby obtaining $g=\lambda(y-\hat{x}_\lambda)$ satisfying the second part of~\eqref{eq:stoch-prox-condition} as well as the variance bound in the third part of~\eqref{eq:stoch-prox-condition}.

\Cref{alg:optms-gen} uses either the generalized MS oracle (\Cref{def:ms-oracle-gen}) or the stochastic proximal oracle (\Cref{def:stoch-prox-oracle}). The differences between it and \Cref{alg:optms} are highlighted in blue. There are two differences addition to the obvious one in the oracle interface (which now returns an additional vector $g_{t+1}$). First, we use the vector $g_{t+1}$ to update $v_t$ using a projected mirror descent step (here $\proj_\xset$ denotes the Euclidean projection unto $\xset$). Second, we allow the algorithm to replace the point $\bx_t$ output from the oracle with any other point $\tx_t$ that has a lower function value. We note that such option exists also in the original proposal by \citet{MonteiroS13a} and is independent of the other generalizations studied in this section.

\begin{algorithm}[t]
	\setstretch{1.2}
	\DontPrintSemicolon
	\caption{Generalized Optimal MS Acceleration}\label{alg:optms-gen}
	\KwInput{Initial $x_0$, generalized oracle $\oracle$}
	\KwParameters{Initial $\blambdainit$, multiplicative adjustment factor $\alpha > 1$}
	
	Set $v_0 = x_0$, $A_0=0$\;
	\phantom{}{\color{blue}$\bx_1, g_1,\lambda_1 = \oracle(x_0;\blambdainit)$}~~,~~$\blambda_1 = \lambda_1$\;
	\For{$t=0,1,\ldots,$}{
		$\ba_{t+1} = \frac{1}{2\blambda_{t+1}}\prn*{1 + \sqrt{1 + 4\blambda_{t+1} A_t}}$\;
		$\bA_{t+1} = A_t + \ba_{t+1}$\;
		$y_{t} = \frac{A_{t}}{\bA_{t+1}} x_t + \frac{\ba_{t+1}}{\bA_{t+1}} v_t$\;
		\lIf{$t>0$}{
			{\color{blue}$\bx_{t+1}, g_{t+1}, \lambda_{t+1} = \oracle(y_t; \blambda_{t+1})$}}
		{\color{blue}Let $\tx_{t+1}\in\xset$ satisfy $f(\tx_{t+1})  \le f(\bx_{t+1})$}\;
		\If{$\lambda_{t+1} \le \blambda_{t+1}$}{
			$a_{t+1}=\ba_{t+1}$, ~$A_{t+1} = A_t + a_t$\;
			$x_{t+1} = \tx_{t+1}$\;
			{$\blambda_{t+2} = \frac{1}{\alpha}\blambda_{t+1}$}
		}
		\Else{
			$\gamma_{t+1} =\frac{\blambda_{t+1}}{\lambda_{t+1}}$\label{line:gen-damping-start}\;
			$a_{t+1}=\gamma_{t+1}\ba_{t+1}$, ~$A_{t+1} = A_t + a_t$\;
			$x_{t+1} = 
			\frac{(1-\gamma_{t+1})A_{t}}{A_{t+1}} x_t
			+ \frac{\gamma_{t+1} \bA_{t+1}}{A_{t+1}} \tx_{t+1}$\;
			$\blambda_{t+2} = \alpha\blambda_{t+1}$\label{line:gen-damping-end}
		}

		{\color{blue}$v_{t+1} = \argmin_{v\in\xset}\crl*{\inner{g_{t+1}}{v} + \frac{1}{2a_{t+1}}\norm{v-v_t}^2 } = \proj_\xset( v_t - a_{t+1} g_{t+1} )$}\;
	}
\end{algorithm}

\begin{theorem}\label{thm:main-gen}
	Let $f:\xset\to \R$ be convex and differentiable, let $\xset$ be closed and convex, and consider \Cref{alg:optms} with parameters $\alpha>1$, $\blambda>0$, and a $\sigma$-Generalized MS oracle (\Cref{def:ms-oracle-gen}) or $\sigma$-stochastic proximal oracle (\Cref{def:stoch-prox-oracle}) for $f$ with  $\sigma\in [0,0.99)$. Let $p\ge 1$ and $c>0$, and suppose that for all $t$ such that $\lambda_{t}>\blambda_{t}$ or $t=1$, the iterates $(\bx_{t}, y_{t-1}, \lambda_{t})$ satisfy a $(s,c)$-movement bound (\Cref{def:movement-bound}) with probability 1. There exist
$C_{\alpha,s} =  O\prn*{ \frac{s}{\min\{s,\ln \alpha\}}\alpha^{\frac{s+1}{3s+1}}}$ and $K_{\alpha} = O\prn*{\frac{1}{\ln \alpha} \alpha^{1/3}}$ such that the following holds.
	Let $\xopt\in\xset$; if $\oracle$ is a stochastic proximal oracle then let $\xopt$ be a minimizer of $f$. For any $\epsilon>0$, when
	\begin{equation*}
		T \ge \begin{cases}
			C_{\alpha,s} \prn*{\frac{c^{s} \norm{x_0 - \xopt}^{s+1}}{\epsilon}}^{\frac{2}{3s+1}}  & s< \infty \\
			K_\alpha \prn*{c \norm{x_0 - \xopt}}^{\frac{2}{3}}\log \frac{\lambda_1\norm{x_0 - \xopt}^2}{\epsilon} & s=\infty, 
		\end{cases}
	\end{equation*}
	we have $f(x_T) - f(\xopt) \le \epsilon$ with probability at least $2/3$. 
\end{theorem}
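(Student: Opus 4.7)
The plan is to re-derive the per-iteration potential-decrease result of \Cref{prop:main-potential} for \Cref{alg:optms-gen}, then invoke \Cref{lem:increase} and the reverse-Hölder/combinatorial machinery from the proof of \Cref{thm:main} unchanged. The downstream steps in \Cref{sec:main_thm:proof} depend only on the algebraic form of the potential and on the bookkeeping of the multiplicative $\blambda_t$ update, neither of which is altered by any of the generalizations in \Cref{alg:optms-gen}. Three substitutions need to be tracked in the per-iteration argument: the gradient $\grad f(\tx_{t+1})$ is replaced by the subgradient $g_{t+1}\in\del f(\bx_{t+1})$; the identity updating $D_{t+1}$ becomes an inequality by non-expansiveness of the Euclidean projection onto $\xset$; and the bound $f(x_{t+1})\le f(\bx_{t+1})$ (available because $f(\tx_{t+1})\le f(\bx_{t+1})$) is invoked when propagating the decrease to the new iterate---directly in the ``down'' branch where $x_{t+1}=\tx_{t+1}$, and via convexity of $f$ on $\xset$ in the ``up'' branch where $x_{t+1}$ is a convex combination of $x_t$ and $\tx_{t+1}$. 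Convexity at $\bx_{t+1}$ with $g_{t+1}\in\del f(\bx_{t+1})$ supplies the same three-term upper bound on $\ba_{t+1}\inner{g_{t+1}}{\xopt-v_t}$ that appears in the proof of \Cref{prop:main-potential}, and the generalized MS condition~\eqref{eq:ms-condition-gen} yields $\lambda_{t+1}\inner{g_{t+1}}{\bx_{t+1}-y_t}\le-\lambda_{t+1}^2(1-\sigma^2)\bar N_{t+1}-\tfrac{1}{2}\norm{g_{t+1}}^2$ with $\bar N_{t+1}=\tfrac{1}{2}\norm{\bx_{t+1}-y_t}^2$. Assembling these pieces produces the analog
\begin{equation*}
A_{t+1}E_{t+1}+D_{t+1}+(1-\sigma^2)\bA_{t+1}\min(\lambda_{t+1},\blambda_{t+1})\bar N_{t+1}\le A_tE_t+D_t.
\end{equation*}

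For the stochastic proximal oracle, I would reproduce the same inequality in conditional expectation given the filtration $\filt[t]$ and $\lambda_{t+1}$. The key step is the decomposition $g_{t+1}=g_{\lambda_{t+1}}+(g_{t+1}-g_{\lambda_{t+1}})$, where $g_{\lambda_{t+1}}=\lambda_{t+1}(y_t-x_{\lambda_{t+1}})$ is the subgradient attached to the exact proximal point $x_{\lambda_{t+1}}=\argmin_{x\in\xset}F_{\lambda_{t+1}}(x)$. By first-order optimality $g_{\lambda_{t+1}}\in\del f(x_{\lambda_{t+1}})$, so this deterministic object satisfies an exact MS-type identity that drives the same cancellations as in the deterministic proof. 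The approximation bound $\Ex{F_{\lambda_{t+1}}(\bx_{t+1})}\le \min F_{\lambda_{t+1}}+\tfrac{\lambda_{t+1}\sigma^2}{4}\Ex{\norm{\bx_{t+1}-y_t}^2}$ combined with $\lambda_{t+1}$-strong convexity of $F_{\lambda_{t+1}}$ yields $\Ex{\norm{\bx_{t+1}-x_{\lambda_{t+1}}}^2}\le\tfrac{\sigma^2}{2}\Ex{\norm{\bx_{t+1}-y_t}^2}$, while the variance bound in~\eqref{eq:stoch-prox-condition} controls the residual $g_{t+1}-g_{\lambda_{t+1}}$ in the quadratic term produced by the mirror-descent update. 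Carefully completing the square so that bias and variance remain comparable to $\lambda_{t+1}\Ex{\bar N_{t+1}}$ reproduces the same expected potential decrease, with $\bx_{t+1}$ in place of $\tx_{t+1}$ throughout.

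With this in hand, \Cref{lem:increase} applies without modification, since its proof uses only the two consequences $\sqrt{A_T}\ge\tfrac{1}{2}\sum_{t\in\SdownT}1/\sqrt{\blambda_t}$ and the multiplicative-update bookkeeping for $\blambda_t$, both of which depend only on realized values of $\lambda_t,\blambda_t$. The reverse-Hölder arguments in \Cref{sec:main_thm:proof} for $s\in(1,\infty)$, $s=\infty$, and $s=1$ go through verbatim (using the assumption that the movement bound holds almost surely on the iterates in $\SsupT\cup\{1\}$), giving $E_T\le D_0/A_T$ with $A_T$ obeying the same lower bound as in \Cref{thm:main}---pointwise for the generalized MS oracle, and in expectation for the stochastic proximal oracle. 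For the deterministic case this immediately yields the stated bound. For the stochastic case, since $\xopt$ is assumed to be a minimizer of $f$, $E_T\ge 0$ holds pointwise; Markov's inequality applied to $\E[A_T E_T]\le D_0$ then gives $\Pr(E_T\le 3D_0/A_T)\ge 2/3$, and the extra factor of $3$ is absorbed into $C_{\alpha,s}$ and $K_\alpha$.

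The main obstacle will be the completing-the-square step in the stochastic setting: the interaction between the bias of $g_{t+1}$, its variance, and the MS-type cancellation at the deterministic proximal point must be tracked carefully so that the coefficient $(1-\sigma^2)$ on $\Ex{\bar N_{t+1}}$ is preserved in the final per-iteration bound. The factor $\tfrac{1}{4}$ (rather than $\tfrac{1}{2}$) in the approximation error in \Cref{def:stoch-prox-oracle} and the matching $\sigma^2/2$ coefficient in the variance bound are precisely what makes the accounting balance.
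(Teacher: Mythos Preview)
Your plan for the generalized MS oracle is correct and matches the paper's argument exactly: the proof of \Cref{prop:main-potential} carries over with $g_{t+1}\in\del f(\bx_{t+1})$ in place of $\grad f(\tx_{t+1})$, projection non-expansiveness for the $D_{t+1}$ update, and $f(\tx_{t+1})\le f(\bx_{t+1})$ for the final step. For the stochastic proximal oracle, your overall strategy---work at the exact proximal point $x_{\lambda_{t+1}}$, use the variance bound to control the second moment of $g_{t+1}$, take conditional expectations, and finish with Markov's inequality---is also the paper's approach.

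There is, however, a gap in how you apply Markov's inequality. You propose to apply it to $\E[A_TE_T]\le D_0$ alone and separately claim that the reverse-Hölder machinery yields an $A_T$ lower bound ``in expectation.'' This does not work: the reverse-Hölder step in \Cref{sec:main_thm:proof} requires the \emph{pointwise} inequality $\sum_{t\in\SsupT}A_t\blambda_t\bar N_t\le O(D_0)$ (this is what feeds into~\eqref{eq:holder-fodder-high}), not merely its expectation, and the resulting lower bound on $A_T$ is highly nonlinear in that sum, so you cannot pass expectations through it. The fix---which is what the paper does---is to apply Markov to the entire nonnegative random variable
\[
A_TE_T+D_T+(1-\sigma^2)\sum_{t\in\SsupT}A_t\blambda_t\bar N_t,
\]
whose expectation is at most $D_0$. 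On the resulting probability-$2/3$ event, both $A_TE_T\le 3D_0$ and the movement-sum bound hold simultaneously; the reverse-Hölder argument then runs pointwise on that event and delivers the $A_T$ lower bound, with the extra factor of $3$ absorbed into the constants. Nonnegativity of this full sum is exactly why $\xopt$ must be a minimizer (so that $E_T\ge 0$), a point you correctly identified.
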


Before providing the proof, we make three more remarks. First, the success probability is relevant only for the stochastic proximal oracle; the bound for the generalized MS oracle holds with probability 1. Second, the need to assume that $\xopt$ is a minimizer of $f$ is also due to a technical issue with the analysis of the stochastic proximal oracle, pointed out in the proof below. Finally, we note that for stochastic proximal oracle we may require the movement bounds to hold on either $(\tx_t, y_{t-1},\lambda_t)$ as stated in the theorem, or on $(x^\star_{t}, y_{t-1}, \lambda_t)$, where $x^\star_{t}$ is the exact $\lambda_t$ proximal point of $y_{t-1}$. 

We recommend reading the proof of \Cref{thm:main} (in \Cref{apdx:framework}) before reading the following proof. 

\begin{proof}[Proof of \Cref{thm:main-gen}]
	The proof consists of showing that a version of \Cref{prop:main-potential} holds under the conditions of \Cref{thm:main-gen}; from there on the arguments on the analysis of the growth rate of $A_T$ is identical. For generalized MS oracles, the proof of \Cref{prop:main-potential} goes through unchanged, except for $\bx_t$ replacing $\tx_t$, the subgradient $g_t\in\del f(\bx_{t})$ replacing $\grad f(\tx_t)$, and using $f(\tx_t)\le f(\bx_t)$ to show that $A_{t+1} E_{t+1} \le (1-\gamma_{t+1})A_t E_t + \gamma_{t+1}\bA_{t+1} [f(\bx_{t+1})-f(\xopt)]$. 
	
	Next, we consider stochastic proximal oracles and adapt~\cite[Lemma 5]{asi2021stochastic}, which considers a very similar oracle, to account for our momentum damping scheme (lines \ref{line:gen-damping-start} to \ref{line:gen-damping-end}). 
	Beginning with some notation, we define the filtration
	\begin{equation*}
		\filt \defeq \sigma(\lambda_1, \bx_1,g_1, \ldots, \lambda_t,\bx_t,g_t, \lambda_{t+1})
	\end{equation*}
	so that $a_{t+1},\ba_{t+1},A_{t+1}\in\filt$. 
	In addition, we let 
	\begin{equation*}
		\hx_t = \argmin_{x\in\xset}\crl*{f(x) + \frac{\lambda_t}{2}\norm{x-y_{t-1}}^2}
	\end{equation*}
	and note that $\hx_{t+1}\in\filt$ and moreover that
	\begin{equation*}
		\hg_{t+1} \defeq \lambda_{t+1}(y_t - \hx_{t+1}) = 
		\Ex*{g_{t+1} | \filt}
	\end{equation*}
	by the second part of~\eqref{eq:stoch-prox-condition}. We also define  $E_t\defeq f(x_t)-f(\xopt)$, $D_t \defeq \frac{1}{2}\norm{v_t-\xopt}^2$, and $M_{t+1}\defeq \frac{1}{2}\norm{\bx_{t+1}-y_t}^2$ as in \Cref{prop:main-potential} (except with $\bx_t$ instead of $\tx_t$). 
	
	The update formula for $v_t$ gives us
	\begin{align*}
		D_{t+1} & 
		 = \frac{1}{2}\norm*{\proj_\xset\left(v_t-a_{t+1} g_{t+1}\right)-\xopt}^2 \nonumber \\&
		\le\frac{1}{2}\norm{(v_{t}-a_{t+1} g_{t+1})-\xopt}^{2}
		=D_{t}+a_{t+1}\inner{g_{t+1}}{\xopt-v_{t}}+\frac{a_{t+1}^{2}}{2} \norm{g_{t+1}}^{2}.
	\end{align*}
	Rearranging and taking expectation, we have
	\begin{flalign}
		a_{t+1}\inner{\hg_{t+1}}{v_{t}-\xopt} &= \Ex*{ a_{t+1}\inner{g_{t+1}}{v_{t}-\xopt} | \filt } \nonumber\\& \le D_t - \Ex*{D_{t+1} | \filt} + \frac{a_{t+1}^2}{2}\Ex*{ \norm{g_{t+1}}^2 | \filt}.\label{eq:stoch-mirror-descent}
	\end{flalign}
	Moreover, by the second and third parts of~\eqref{eq:stoch-prox-condition},
	\begin{equation}\label{eq:stoch-second-moment}
		\Ex*{ \norm{g_{t+1}}^2 | \filt} = \norm{\Ex*{g_{t+1} | \filt}}^2 + \Var\brk*{ g_{t+1} \mid \filt} \le
		\norm{\hg_{t+1}}^2 + \sigma^2\lambda_{t+1}^2\Ex*{M_{t+1}| \filt}.
	\end{equation}

	By the definition $y_t$ and $A'_{t+1} = A_t+a'_{t+1}$, we have
	\[
	\ba_{t+1}v_{t}=\bA_{t+1}y_{t}-A_{t}x_{t}=\ba_{t+1}\hx_{t+1}+\bA_{t+1}(y_{t}-\hx_{t+1})-A_{t}(x_{t}-\hx_{t+1})\,.
	\]
	Therefore,
	\begin{equation*}
		\begin{aligned}
			& \ba_{t+1}\inner{\hg_{t+1}}{\xopt - v_{t}}\\
			& \hspace{1.5em} =
			\inner{\hg_{t+1}}{\ba_{t+1}(\xopt - \hx_{t+1})+\bA_{t+1}(\hx_{t+1}-y_{t})+A_{t}(x_{t}-\hx_{t+1})}
			\\
			&\hspace{1.5em} \stackrel{(i)}{\leq} \ba_{t+1}[f(\xopt)-f(\hx_{t+1})]+\bA_{t+1}
			\inner{\hg_{t+1}}{\hx_{t+1}-y_{t}}
			+A_{t}[f(x_{t})-f(\hx_{t+1})]\\
			&\hspace{1.5em} \stackrel{(ii)}{=} A_{t}E_{t}-\bA_{t+1}[f(\hx_{t+1})-f(\xopt)]-\frac{\bA_{t+1}}{\lambda_{t+1}}\norm{\hg_{t+1}}^2\,.
		\end{aligned}
	\end{equation*}
	where we used $(i)$ the fact that $\hg_{t+1}\in\del f(\hx_{t+1})$ and $(ii)$ that $A'_{t+1}=A_t+a'_{t+1}$ and $\hx_{t+1} - y_t = -\hg_{t+1}/\lambda_{t+1}$. To connect $f(\hx_{t+1})$ to $f(\bx_{t+1})$, we use the first part of~\eqref{eq:stoch-prox-condition}, which gives
	\begin{equation*}
		\Ex*{ f(\bx_{t+1}) + \lambda_{t+1}M_{t+1} | \filt } \le f(\hx_{t+1}) + \frac{\lambda_{t+1}}{2}\norm{\hx_{t+1}-y_t}^2 + \frac{\sigma^2}{2}\lambda_{t+1}\Ex*{ M_{t+1} | \filt }.
	\end{equation*}
	Substituting back and recalling that $\norm{\hx_{t+1}-y_t}=\norm{\hg_{t+1}}/\lambda_{t+1}$ and $f(\tx_{t+1})\le f(\bx_{t+1})$ gives
	\begin{flalign}
	\bA_{t+1}\Ex*{ f(\tx_{t+1}) - f(\xopt) | \filt } \le \hspace{-4cm}&
	\nonumber \\& A_t E_t + \ba_{t+1} \inner{\hg_{t+1}}{v_{t} - \xopt} - \prn*{1-\frac{\sigma^2}{2}}\bA_{t+1}\lambda_{t+1} \Ex*{M_{t+1}|\filt}  -\frac{\bA_{t+1}}{2\lambda_{t+1}}\norm{\hg_{t+1}}^2.\label{eq:stoch-error-upper-bound}
	\end{flalign}

	Let $\gamma_{t+1} \defeq \min\crl*{1, \frac{\blambda_{t+1}}{\lambda_{t+1}}}$ and note that this definition is consistent with $\gamma_{t+1}$ as defined in the algorithm and that, for any value of $\blambda_{t+1}/\lambda_{t+1}$ we have $a_{t+1} = \gamma_{t+1}\ba_{t+1}$, $A_{t+1} = (1-\gamma_{t+1})A_t + \gamma_{t+1}\bA_{t+1}$ and $x_{t+1} = \frac{(1-\gamma_{t+1})A_{t}}{A_{t+1}} x_t + \frac{\gamma_{t+1} \bA_{t+1}}{A_{t+1}} \tx_{t+1}$. Therefore, by convexity,
	\begin{equation*}
		f(x_{t+1}) \le \frac{(1-\gamma_{t+1})A_{t}}{A_{t+1}} f(x_{t}) + \frac{\gamma_{t+1} \bA_{t+1}}{A_{t+1}} f(\tx_{t+1}).
	\end{equation*}
	Subtracting $f(\xopt)$, multiplying by $A_{t+1}$ and taking expectation, we have
	\begin{flalign}
		&\Ex*{A_{t+1}E_{t+1} | \filt} \le (1-\gamma_{t+1})E_t + \gamma_{t+1} \bA_{t+1}\Ex*{ f(\tx_{t+1}) - f(\xopt) | \filt }
		\nonumber \\ & \hspace{8pt}\le
		A_t E_t + a_{t+1} \inner{\hg_{t+1}}{v_{t} - \xopt} - \prn*{1-\frac{\sigma^2}{2}}\bA_{t+1}\gamma_{t+1}\lambda_{t+1} \Ex*{M_{t+1}|\filt}  -\frac{\gamma_{t+1}\bA_{t+1}}{2\lambda_{t+1}}\norm{\hg_{t+1}}^2.\label{eq:stoch-almost-potential-bound}
	\end{flalign}
	where in the second inequality we substituted~\eqref{eq:stoch-error-upper-bound}. Note that
	\begin{equation*}
		a_{t+1}^2 = \gamma_{t+1}^2 (\ba_{t+1})^2 = \frac{\gamma_{t+1}^2\bA_{t+1}}{\blambda_{t+1}} \le \frac{\gamma_{t+1}\bA_{t+1}}{\lambda_{t+1}}.
	\end{equation*}  
	Substituting back into~\eqref{eq:stoch-mirror-descent} and combining with~\eqref{eq:stoch-second-moment} gives
	\begin{equation*}
		a_{t+1}\inner{\hg_{t+1}}{v_{t}-\xopt} \le D_t - \Ex*{D_{t+1} | \filt} + \frac{\sigma^2\bA_{t+1}\gamma_{t+1}\lambda_{t+1}}{2}\Ex*{M_{t+1} | \filt} + \frac{\gamma_{t+1}\bA_{t+1}}{2\lambda_{t+1}}\norm{\hg_{t+1}}^2.
	\end{equation*}
	Plugging the above bound on $a_{t+1}\inner{\hg_{t+1}}{v_{t}-\xopt}$ into~\eqref{eq:stoch-almost-potential-bound}, noting that $\gamma_{t+1}\lambda_{t+1} = \min\{\blambda_{t+1},\lambda_{t+1}\}$,  and rearranging, we obtain
	\begin{equation*}
		\Ex*{A_{t+1}E_{t+1} + D_{t+1} + (1-\sigma^2)\bA_{t+1} \min\{\blambda_{t+1},\lambda_{t+1}\} M_{t+1} | \filt} \le  A_{t}E_{t} + D_{t}.
	\end{equation*}
	Iterating this bound and noting that $A_t \le \bA_t$ for all $t$, we obtain
	\begin{equation*}
		\Ex*{A_T E_T + D_T + (1-\sigma^2)\sum_{t\in\SsupT} A_t \blambda_t M_t } \le A_0E_0 + D_0.
	\end{equation*}
	By our assumption that $\xopt$ is a minimizer of $f$, we have that $E_T$ is a nonegative random variable, and consequently the above display is a bound on the expectation of a nonegative random variable. (This is the reason we require $\xopt$ to be a minimizer of $f$). Therefore, by Markov's inequality, the event 
	\begin{equation*}
		A_T E_T + D_T + (1-\sigma^2)\sum_{t\in\SsupT} A_t \blambda_t M_t \le 3(A_0E_0 + D_0)
	\end{equation*}
	holds with probability at least 2/3, implying~\eqref{eq:potential-2}, except with $A_0 E_0 +D_0$ multiplied by a factor of $3$. Moreover, the growth bounds $\sqrt{A_T} \ge \frac{1}{2}\sum_{t\in\SdownT}1/\sqrt{\blambda_{t}}$ holds deterministically as a consequence of the update rule for $A_t$. These two facts suffice to establish the growth rate of $A_T$ precisely as we do in the proof of \Cref{thm:main}, thereby obtaining the same rate of convergence (up to a constant). 
\end{proof} %
\section{Proofs for \Cref{sec:solvers}}\label{apdx:solvers}

This section contains the analysis of our adaptive oracle implementations (\Cref{alg:adaptive-msn-step,alg:adaptive-msn-cg-step}).  We begin by quickly showing how an idealized regularized Newton step adaptively yields optimal movement bounds without need to know the degree or order of the Hessian \Holder continuity (\Cref{apdx:solvers-ideal}). Then, we prove movement bound and complexity  guarantees for our second-order and first-order adaptive oracle implementations in \Cref{apdx:solvers-amsn,apdx:solvers-amsn-cg}, respectively. Finally, we provide auxiliary results used throughout the preceding proofs (\Cref{apdx:solvers-aux}).

\subsection{A movement bound for the ideal Newton step}\label{apdx:solvers-ideal}

The following proposition shows how choosing the smallest $\lambda$ for which a $\lambda$-regularized Newton step satisfies the MS condition yields movement bounds adaptive to Hessian \Holder continuity.

\begin{proposition}
	For any $y\in\R^d$ and $\sigma\in(0,1)$, let $\lambda^\star$ be the smallest $\lambda$ for which the $\lambda$-regularized Newton step $x_\lambda=y-[\hess f(y) + \lambda^\star I]^{-1} \grad f(y)$ satisfies the MS condition $\norm{x_\lambda-(y-\frac{1}{\lambda}\grad f(x))} \le \sigma \norm{x_\lambda-y}$. If $\hess f$ is $(H,\nu)$-\Holder continuous for any $\nu\in[0,1]$, the triplet $(x_{\lambda^\star},y,\lambda^\star)$ satisfies a $\prn*{1+\nu, \prn[\big]{\frac{H}{2\sigma}}^{1/(1+\nu)} }$-movement bound.
\end{proposition}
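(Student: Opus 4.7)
The plan is to show that any $\lambda$ satisfying $\lambda \ge \tfrac{H}{2\sigma}\norm{x_\lambda - y}^\nu$ already certifies the MS condition, and then exploit monotonicity of the Newton step length in $\lambda$ to pull this back to the minimizer $\lambda^\star$.

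First I would write the MS residual in closed form. The defining identity $[\hess f(y) + \lambda I](x_\lambda - y) = -\grad f(y)$ of the regularized Newton step gives $\lambda(x_\lambda - y) = -\grad f(y) - \hess f(y)(x_\lambda - y)$, hence
\begin{equation*}
    x_\lambda - \prn*{y - \tfrac{1}{\lambda}\grad f(x_\lambda)} = \tfrac{1}{\lambda}\prn*{\grad f(x_\lambda) - \grad f(y) - \hess f(y)(x_\lambda - y)}.
\end{equation*}
Applying the Taylor bound already invoked in \Cref{ssec:solvers-prior} for an $(H,\nu)$-\Holder Hessian yields $\norm{\grad f(x_\lambda) - \grad f(y) - \hess f(y)(x_\lambda-y)} \le \tfrac{H}{2}\norm{x_\lambda - y}^{1+\nu}$, so the MS condition is implied by the scalar sufficient condition $\lambda \ge \tfrac{H}{2\sigma}\norm{x_\lambda - y}^\nu$.

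Next I would observe that $\lambda \mapsto \norm{x_\lambda - y} = \norm{[\hess f(y) + \lambda I]^{-1}\grad f(y)}$ is continuous and nonincreasing in $\lambda > 0$ (immediate from the spectral decomposition of $\hess f(y) \succeq 0$). Consequently $\varphi(\lambda) \defeq \lambda - \tfrac{H}{2\sigma}\norm{x_\lambda - y}^\nu$ is continuous and strictly increasing, negative for small $\lambda$ and positive for large $\lambda$, so it has a unique zero $\lambda^\#$ at which $\lambda^\# = \tfrac{H}{2\sigma}\norm{x_{\lambda^\#} - y}^\nu$. By the sufficient condition above, the MS condition holds at $\lambda = \lambda^\#$, hence by minimality $\lambda^\star \le \lambda^\#$. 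Combining this with monotonicity of $\norm{x_\lambda - y}$ gives $\norm{x_{\lambda^\star} - y} \ge \norm{x_{\lambda^\#} - y}$, and substituting into the equality at $\lambda^\#$ yields $\lambda^\star \le \tfrac{H}{2\sigma}\norm{x_{\lambda^\star} - y}^\nu$. Rearranging produces $\norm{x_{\lambda^\star} - y} \ge (2\sigma\lambda^\star/H)^{1/\nu}$, which is exactly the $(1+\nu,\,(H/(2\sigma))^{1/(1+\nu)})$-movement bound in \Cref{def:movement-bound}.

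The degenerate case $\grad f(y) = 0$ makes $x_\lambda = y$ for every $\lambda$, so $\lambda^\star$ may be taken arbitrarily small and the bound holds trivially. The main obstacle is the constant in the Taylor remainder: a direct integral estimate only gives $H/(1+\nu)$, which matches $H/2$ only at $\nu=1$. To obtain the claimed constant for all $\nu\in[0,1]$ I would invoke the sharper $H/p!$ bound already used in the paper (via Lemma~2.5 of \cite{song2021unified}); once this constant is in place, the remainder of the argument is a short monotonicity/fixed-point calculation with no additional subtleties.
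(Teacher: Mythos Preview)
Your argument is essentially the paper's proof: your fixed point $\lambda^\#$ is precisely the paper's $\tlambda$ (arising from $\oracleP[2,\nu]$ with $M=H/\sigma$), and both proofs conclude via the monotonicity of $\lambda\mapsto\norm{x_\lambda-y}$ (the paper's \Cref{lem:reg-newton-step-properties}). Your identification of the Taylor-remainder constant as the only delicate point, and your resolution by citing the same $H/p!$ bound the paper uses in \Cref{ssec:solvers-prior}, matches exactly what the paper does.
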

\begin{proof}
	Let 
	\begin{equation*}
		\tx = \argmin_{x'\in\R^d} \crl*{ \tilde{f}_2(x';y) + \frac{H}{2(2+\nu)\sigma}\norm{x'-y}^{2+\nu}}
		~~\mbox{and}~~\tlambda=\frac{H}{2 \sigma}\norm{x-y}^{\nu}.
	\end{equation*}
	That is $(\tx, \tlambda) = \oracleP[2,\nu](y)$ with parameter $M=H/\sigma$. Note that (a) $\tx,y$ and $\tlambda$ satisfy the MS condition as explained in \Cref{ssec:solvers-prior}, and (b) $\tx = y -[\hess f(y) + \tlambda I]^{-1} \grad f(y)$. Therefore the minimal $\lambda^\star$ satisfying the MS condition must satisfy $\lambda^\star \le \tlambda$ and 
	\begin{equation*}
		\norm{x^\star - y} \overge{(\star)} \norm{\tx -y} = \prn*{\frac{2\sigma \tlambda}{H}}^{1/\nu} \ge \prn*{\frac{2\sigma \lambda^\star}{H}}^{1/\nu},
	\end{equation*}
	where $(\star)$ is due to auxiliary \Cref{lem:reg-newton-step-properties}. This yields the movement bound. 
\end{proof}

We remark that the same proof and movement bound hold for a slightly more relaxed notion of $\lambda^\star$, namely the smallest for which the $x_\lambda$ satisfies the MS condition for all $\lambda \ge \lambda^*$. This is the actual notion of $\lambda^\star$ that we approximate in the next subsections, where we find $\lambda$ such that $x_\lambda$ satisfies the MS condition, but $x_{\lambda/2}$ does not satisfy it (and therefore $\lambda \ge \lambda^\star /2$).

\subsection{Analysis of \Cref{alg:adaptive-msn-step}}\label{apdx:solvers-amsn}

\restateThmAmsn*

\begin{proof}
	First, note that, by construction, the algorithm outputs a value of $\lambda$ for which $\mscheck(\lambda;y,\sigma)$ evaluates to True, and is therefore a $\sigma$-MS oracle as per \Cref{def:ms-condition}. 
	
	Next, let us bound the total number of linear system solutions in the algorithm, noting it is equal to the number of calls to $\mscheck$. The algorithm solves 1 linear system in~\cref{line:amsn-first-check}, and then solves $k^\star + 1$ linear systems in either the while-loop in \cref{line:amsn-decrease-loop} or the while-loop in \cref{line:amsn-increase-loop}. The algorithm then arrives at the while-loop in \cref{line:amsn-bisection-loop} with two values $\lamvalid$ and $\laminvalid$ such that (a) $\mscheck(\lamvalid;y,\sigma)$ is True and $\mscheck(\laminvalid;y,\sigma)$ is False and (b) $\lamvalid / \laminvalid = 2^{2^{k^\star}}$ . The while-loop maintains the invariant (a) while transforming  $\lamvalid / \laminvalid \to \sqrt{\lamvalid / \laminvalid}$ at each iteration. After $j$ iterations of the while-loop, we have 
	\begin{equation*}
		\frac{\lamvalid}{\laminvalid} = \prn*{2^{2^{k^\star}}}^{2^{-j}} = {2^{2^{k^\star-j}}}.
	\end{equation*}
	Therefore, after precisely $k^\star$ iterations we obtain $\lamvalid = 2\laminvalid$ and the loop terminates. Hence, the overall number of linear system solutions is $1+(k^\star+1) + k^\star=2+2k^\star$, or just 1 in case the first $\mscheck$ is True and the algorithm is lazy.
	
	To bound $k^\star$ in terms of the input $\blambda$ and output $\lambda$, consider the values of $\lamvalid$ and $\laminvalid$  before entering the while-loop at \cref{line:amsn-bisection-loop}. First, note that $\laminvalid \le \lambda \le \lamvalid$. Second, assuming that $\lambda > \blambda$ (i.e., the first $\mscheck$ fails) we have
	\begin{equation*}
		\lamvalid = \blambda \prod_{j=0}^{k^\star} 2^{2^{j}} = \blambda \cdot 2^{\prn*{2^{k^\star+1}-1}}.
	\end{equation*}
	Combining these two facts, we have
	\begin{equation*}
		2^{\prn*{2^{k^\star+1}-1}}\frac{\blambda}{\lambda} \le \frac{\lamvalid}{\laminvalid} = 2^{2^{k^\star}}.
	\end{equation*}
	Rearranging this inequality yields $k^{\star} \le \log_2\prn*{1+\log_2\frac{\lambda}{\blambda}}$ as claimed. The bound for $\lambda < \blambda$ follows analogously.
	
	We now turn to showing the movement bound assuming $\hess f$ is $(H,\nu)$-\Holder for $\nu\in[0,1]$ within a ball of radius $2\norm{x-y}$ around $y$, where $x=y-(\hess f(y) + \lambda I)^{-1} \grad f(y)$ is the output of the algorithm. Let  $\lamminus=\lambda/2$ and $\xminus = y-(\hess f(y) + \lamminus I)^{-1} \grad f(y)$. Let $\tlambda$ and $\tx=y-(\hess f(y) + \tlambda I)^{-1} \grad f(y)$ satisfy $\tlambda = \frac{H}{2\sigma} \norm{\tx- y}^\nu$, i.e.,
	\begin{equation*}
		\tx = \argmin_{x\in\R^d}\crl*{\inner{\grad f(y)}{x-y} + \half\inner{x-y}{\hess f(y) (x-y)} + \frac{H}{2(2+\nu)\sigma}\norm{x-y}^{2+\nu}}.
	\end{equation*}
	It is immediate to see that for any $\lambda>\tilde{\lambda}$, $\mscheck (\lambda; y,\sigma)$ must evaluate to True and thus the while loop in ~\Cref{line:amsn-increase-loop} is guaranteed to terminate. 
	 
	We will now argue that $\lamminus < \tlambda$. First, note that $\lamminus$ is the last value of $\laminvalid$ before the while loop at \cref{line:amsn-bisection-loop} terminates. Therefore, $\mscheck(\lamminus; y,\sigma)$ must evaluate to False. Further, note that, by \Cref{lem:reg-newton-step-properties},
	\begin{equation*}
		\norm{\xminus - y} \le \frac{\lambda}{\lamminus}\norm{x-y} = 2\norm{x-y}.
	\end{equation*}
	Using the local Hessian assumption of  \Holder continuity along with $(\hess f(y) + \lamminus I)\xminus + \grad f(y)=0$, auxiliary  \Cref{lem:ms-via-second-order-smoothness} gives
	\begin{equation*}
		\norm*{\xminus - \prn*{y- \frac{1}{\lamminus}\grad f(\xminus)}} \le 
		\frac{H}{2\lamminus}\norm{\xminus-y}^{1+\nu} = \frac{\sigma\tlambda}{\lamminus\norm{\tx-y}^\nu}\norm{\xminus-y}^{1+\nu},
	\end{equation*}
	with the final equality using the definition of $\tlambda$. 
	Assuming by contradiction that $\lamminus \ge \tlambda$, we have that
	\begin{equation*}
		\norm*{\xminus - \prn*{y- \frac{1}{\lamminus}\grad f(\xminus)}} \le 
		\frac{\sigma\norm{\xminus-y}^{1+\nu}}{\norm{\tx-y}^\nu}\le \sigma\norm{\xminus-y},
	\end{equation*}
	where the final inequality used \Cref{lem:reg-newton-step-properties} combined with $\lamminus \ge \tlambda$ to deduce that $\norm{\xminus-y}\le \norm{\tx-y}$. This implies that $\mscheck(\lamminus;y,\sigma)$ is True, giving a contradiction. The $(1+\nu,(2H/\sigma)^{1/(1+\nu)})$-movement bound follows from
	\begin{equation*}
		\lambda = 2\lamminus <  2\tlambda = 
		\frac{H \norm{\tx-y}^\nu}{\sigma} \le \frac{2^{\nu}H \norm{x-y}^\nu}{\sigma^{1}},
	\end{equation*}
	with the final inequality using \Cref{lem:reg-newton-step-properties} combined with $\lambda / \tlambda \le 2$, which implies $\norm{\tx-y} \le 2\norm{x-y}$.
\end{proof}

We remark that the termination of the while loop in~\cref{line:amsn-first-check} (when $\lazyflag$ is False) is, strictly speaking, not guaranteed. For example, if the function $f$ is quadratic, $\mscheck$ will always evaluate to True. 
However, it is also straightforward to verify that as long as $\mscheck$ is True for a given $\lambda$, the corresponding regularized Newton step $x=y - (\nabla^2 f(y) + \lambda I)^{-1} \nabla f(y)$ has optimality gap bounded by $\lambda \norm{x-y}^2$. Therefore, since the loop in~\cref{line:amsn-first-check} decreases $\lambda$ at a double-exponential rate, if it fails to terminate after a small number of iterations then it means we have found an essentially optimal point. Put differently, if we seek an $\epsilon$ suboptimal point, we may stop the loop in~\cref{line:amsn-first-check} after $O(\log \log( \frac{\lambda_0’ R^2}{\epsilon}))$ iterations. We account for this possibility in the complexity bound below.

\restateCorrAmsn*

\begin{proof}
	Throughout the proof, we let $T$ denote the index of the first iteration of \Cref{alg:optms} for which $f(x_T) \le f(\xopt) + \epsilon$.
	The bound on Hessian evaluation complexity follows immediately from the validity of the MS-approximate proximal oracle and movement bounds guaranteed in \Cref{thm:amsn}, and the iteration bound given by \Cref{thm:main}, noting that each call to \Cref{alg:adaptive-msn-step} requires only one Hessian computation. 
	
	To bound the total number of linear system solutions, we consider separately $(i)$ the iteration where $\lambda_{t} = \blambda_{t}$, $(ii)$ the iterations $t\in [2,T-1]$ where $\lambda_{t} > \blambda_{t}$ (i.e, those in $\SsupT$), $(iii)$ the first iteration (note that $\lambda_{t+1} < \blambda_{t+1}$ can only happen in the first iteration since we are using a lazy oracle), and $(iv)$ the last iteration.
	
	Case $(i)$ is easy, because at iterations where $\lambda_{t+1} = \blambda_{t+1}$ \Cref{alg:adaptive-msn-step} requires at most 2 linear system solves by \Cref{thm:amsn}, and thereofore all such iterations combined require $N_{(i)}=O(T) = O\prn*{ \prn*{\frac{H \norm{x_0-\xopt}^{2+\nu}}{\epsilon}}^{2/(4+3\nu)}}$ linear system solves.
	
	To handle case $(ii)$, which requires the most work, we use \Cref{thm:amsn} to bound the total number of linear system solves contributed by these iterates by
	\begin{flalign*}
		N_{(ii)} & \overeq{(a)} O\prn*{\sum_{t \in \SsupT[T-1]} \log_2 \prn*{ 1+\log_2 \frac{\lambda_{t}}{\blambda_{t}}}}
		\overeq{(b)}  O\prn*{\sum_{t \in \SsupT[T-1]} \log_2 \prn*{ 1+\log_2 \frac{H \norm{\tx_{t}-y_{t-1}}^\nu}{\blambda_{t}}}}
		\\ &
		 \overeq{(c)} O\prn*{\sum_{t \in \SsupT[T-1]} \prn*{H\frac{\norm{\tx_{t} - y_{t-1}}^\nu }{\blambda_{t}}}^{2/(4+3\nu)} }
		\overeq{(d)} O\prn[\Big]{ (H \norm{x_0-\xopt}^\nu A_{T-1} )^{2/(4+3\nu)}},
	\end{flalign*}
	which follows from $(a)$ the complexity bound in \Cref{thm:amsn}; $(b)$ the fact that a $(1+\nu,O(H^{1/(1+\nu)}))$-movement bound holds for every $t\in\SsupT$, meaning that $\lambda = O(H)\norm{\tx_{t}-y_t}^\nu$; $(c)$ by the inequalities 
	\[
	\log_2 \left( 1 + \log_2 z \right) \leq \log_2 z = c \log_2 z^{1/c} = O(z^{1/c})
	\]
	for any $z \geq 1$ and fixed $c \geq 0$; and $(d)$  \Cref{lem:yet-another-reverse-holder}, using the assumption that $f(x_{T-1}) > f(\xopt)$.  Moreover, note that 
	\begin{equation*}
		\epsilon < f(x_{T-1})-f(\xopt) \le  \frac{ \norm{x_0 - \xopt}^2 }{2 A_{T-1}}
	\end{equation*} 
	by eq.~\eqref{eq:main-potential-implied} in \Cref{prop:main-potential}, which implies $A_{T-1} = O(\norm{x_0 - \xopt}^2 / \epsilon)$. Substituting back into the bound on $N_{(ii)}$, we obtain
	\begin{equation*}
		N_{(ii)} = O\prn[\Big]{ (H \norm{x_0-\xopt}^\nu A_{T-1} )^{2/{(4+3\nu)}}} = O\prn*{ \prn*{\frac{H \norm{x_0-\xopt}^{2+\nu}}{\epsilon}}^{2/(4+3\nu)} }.
	\end{equation*}
	Therefore, perhaps surprisingly, the worst-case double-logarithmic per-iteration linear system complexity amortizes to a constant.  
	
	To handle the last two edge cases, let $z_\star$ be the minimizer of $f$ closest to $x_0$, such that $\norm{x_0-z_\star}=R$. 
	In case $(iii)$, i.e., the number of linear system solves in the first iteration. We consider separately the cases $\lambda_1 \ge \blambda_0$ and $\lambda_1 < \blambda_0$. In the former case, the movement bound guaranteed at the first iteration yields (noting that $y_0=x_0$)
	\begin{equation*}
		\frac{\lambda_1}{\blambdainit} = O\prn*{\frac{H \norm{x_1-x_0}^\nu}{\blambdainit}} = O\prn*{\frac{H R^\nu}{\blambdainit}},
	\end{equation*}
	where the last transition follows from \Cref{lem:ms-sc-bounds} and the assumption $f(x_1) \ge f(z_\star)$. In the latter case ($\lambda_1 < \blambda_0$), \Cref{lem:ms-sc-bounds} and $f(x_1) - f(z_\star) \ge f(x_1)-f(\xopt) \ge \epsilon$ gives
	\begin{equation*}
		\frac{\blambdainit}{\lambda_1} = O\prn*{\frac{\blambdainit R^2}{f(x_1)-f(z_\star)}} = O\prn*{\frac{\blambdainit R^2}{\epsilon}}.
	\end{equation*}
	Therefore, the number of linear system solutions at the first iteration is \[O\prn*{\log\abs*{\log\frac{\lambda_1}{\blambdainit}}}=O\prn*{\log\log\max\crl*{\frac{H R^\nu}{\blambdainit}, \frac{\blambdainit R^2}{\epsilon} }}.\]

	Finally, we consider $(iv)$ the last iteration $t=T$. If $T\notin \SsupT$ then there is nothing to consider, since it only contributes a single linear system solutions. If $T\in \SsupT$, however, we cannot treat it as in case $(ii)$, since we are not guaranteed that $f(x_T) \ge f(\xopt)$. However, we \emph{are} guaranteed that $f(x_T) \ge f(z_\star)$. Therefore, \Cref{lem:yet-another-reverse-holder} allows us to conclude that
	\begin{flalign*}
		\frac{\lambda_T}{\blambda_T} = O\prn*{\frac{H\norm{\tx_T-y_{T-1}}^\nu}{\blambda_{T-1}}} & = 
		O\prn*{\brk*{\sum_{t \in \SsupT} \prn*{H\frac{\norm{\tx_{t} - y_{t-1}}^\nu }{\blambda_{t}}}^{\frac{2}{4+3\nu}}}^{\frac{4+3\nu}{2}} }
		\\ & = O\prn[\Big]{H R^\nu A_{T} }.
	\end{flalign*}
	Noting that $A_T = O(A_{T-1})$ (see \Cref{lem:A-mult-stability}) and $A_{T-1} = O(R^2/\epsilon)$ (since $\epsilon < f(x_{T-1}) - f(\xopt) \le f(x_{T-1}) - f(z_\star) \le \frac{R^2}{2A_{T-1}}$) we conclude that $\frac{\lambda_T}{\blambda_T} = O\prn*{\frac{HR^{2+\nu}}{\epsilon}}$. Since $\frac{HR^{2+\nu}}{\epsilon} = {\frac{H R^\nu}{\blambdainit} \cdot \frac{\blambdainit R^2}{\epsilon}} \le \max\crl*{\frac{H R^\nu}{\blambdainit}, \frac{\blambdainit R^2}{\epsilon} }^2$, we conclude that the $O\prn*{\log\log \frac{\lambda_T}{\blambda_T}}$ contribution of case $(iv)$ to the total number of linear systems is no greater than our bound for case $(iii)$. 
\end{proof}

We remark that departing from a normal bisection or doubling scheme we have used a ``double-logarithmic scale'' in the while loops starting at~\Cref{line:amsn-decrease-loop} and~\Cref{line:amsn-increase-loop} in~\Cref{alg:adaptive-msn-step}. As shown in the proof above, this doesn't affect the complexity bounds shown for case $(ii)$, but gives better complexity bounds in the analysis of case $(iii)$ and $(iv)$ . Eventually this allows us to only have an additive double-logarithmic term in the final complexity of linear system solves as stated in~\Cref{coro:adaptive-msn-step}.

\subsection{Analysis of \Cref{alg:adaptive-msn-cg-step}}\label{apdx:solvers-amsn-cg}

\restateThmAmsnFO*

\begin{proof}

	Clearly, \Cref{alg:adaptive-msn-cg-step} can only return a pair $x,\lambda$ satisfying \Cref{def:ms-condition}. (At this point, we are not guaranteed that the algorithm ever returns. However, below we prove that the check in \Cref{line:newton-cr-ms-check} succeeds for finite $\lambda$ as long as the Hessian is continuous).
	
	To analyze the complexity of the algorithm, we begin by noting that it approximates a Newton step with MinRes at most $\abs*{\log_2 \frac{\lambda}{\blambda}}+1$ times: when $\lambda \ge \blambda$ we approximate Newton steps for values for regularization parameters of the form $2^k\blambda$ for $k=0,1,\ldots,\log_2 \frac{\lambda}{\blambda}$; when $\lambda < \blambda$ we instead consider regularization parameters of the form $2^{-k}\blambda$ for $k=0,1,\ldots,\log_2 \frac{\lambda}{\blambda}+1$. These considerations immediately yield our claimed $O\prn*{ \abs*{\log_2 \frac{\lambda}{\blambda}} }$ bound on the number of gradients evaluated by the algorithm. 
	
	To bound the total Hessian-vector product complexity, suppose that \Cref{alg:adaptive-msn-cg-step} attempts to approximate a Newton step with regularization parameter $\lambda_k$; we argue that the corresponding terminates in $O\prn[\big]{\sqrt{\frac{\opnorm{\hess f(y)} + \lambda_k}{\lambda_k \sigma}}}$ Hessian-vector products. Let $A=\hess f(y) + \lambda_k I$, $b=-\grad f (y)$ and $w^\star = A^{-1}b$. \Cref{lem:cr-properties} guarantees that $\norm{r_t} = O\prn*{\opnorm{A}\norm{w^\star} / t^2}$. Consequently, after $T_\lambda= O\prn[\big]{\sqrt{\frac{\opnorm{A}}{\lambda_k \sigma}}}$ steps we have $\norm{r_{T_\lambda}} \le \frac{\lambda_k \sigma}{4}\norm{w^\star}$. Since $h(x) = \half x^{\T} A x - b^{\T} x$ is $\lambda_k$-strongly-convex and $r_i = \grad h(w_i)$, we have $\lambda_k \norm{w_{T_\lambda} - w^\star} \le \norm{r_{T_\lambda}}$, and consequently $\norm{w_{T_\lambda} - w^\star} \le \frac{\sigma}{4} \norm{w^\star} \le \frac{1}{4}\norm{w^\star}$. Since $\norm{w^\star} - \norm{w_{T_\lambda}} \le \norm{w_{T_\lambda} - w^\star}$ by the triangle inequality, we conclude that $\norm{w^\star} \le \frac{4}{3} \norm{w_{T_\lambda}}$. Substituting back yields $\norm{r_{T_\lambda}} \le \frac{\lambda_k \sigma}{3}\norm{w_{T_\lambda}}$, and consequently the while-loop must terminate in  $T_\lambda = O\prn[\big]{\sqrt{\frac{\opnorm{\hess f(y)}+\lambda_k}{\lambda_k \sigma}}}$ steps, with each step corresponding to a single Hessian-vector product.
	
	Next, we argue that for very large $\lambda_k$ we do not need to compute any (new) Hessian-vector product, since the while-loop terminates in one step. More specifically that, $\lambda_k \ge \frac{4\opnorm{\hess f(y)}}{\sigma}$ the while-loop terminates after one step, i.e., $\norm{r_1} \le \frac{\lambda_k\sigma}{2}\norm{w_1}$. To see this, first observe that (since $w_1$ has the smallest residual among all vectors $w$ proportional to $b$), we have $\norm{r_1} \le \norm{A b / \lambda_k - b} = \frac{1}{\lambda_k}\norm{\hess f(y)b} \le \frac{\opnorm{\hess f(y)}}{\lambda_k} \norm{b} \le \frac{\sigma}{4} \norm{b}$. Moreover, since $w_1 = \frac{b^{\T} A b}{b^{\T} A^2 b} b$, we have $\norm{w_1} \ge \frac{1}{\lambda_k + \opnorm{\hess f(y)}}\norm{b} \ge \frac{4}{5\lambda_k} \norm{b}$. Consequently,  $\norm{r_1} \le \frac{5\lambda_k\sigma}{16}\norm{w_1}$, meaning that the while-loop terminates after the first iterate. Therefore, for $\lambda_k \ge \frac{4\opnorm{\hess f(y)}}{\sigma}$ no Hessian-vector product computations are necessary (since the ones before the while-loop can be computed once for all $k$).

	\newcommand{\lambdamin}{\lambda_{\mathrm{min}}}
	
	Let $\lambdamin$ be the smallest value of $\lambda_k$ encountered by the algorithm, and note that $\lambdamin \ge \min\{\lambda/2, \blambda\}$, and that $\lambda_k = 2^k \lambdamin$ for $k=0,\ldots, O\prn*{\abs*{\log\frac{\lambda}{\blambda}}}$. By the discussion above, we require Hessian-vector product computations only at the first $K = O(\log \frac{\opnorm{\hess f(y)}}{\sigma \lambdamin})$ iterations. Therefore the total number of Hessian-vector products is $O\prn*{\sum_{k=0}^K \sqrt{1 + \frac{\opnorm{\hess f(y)}}{\sigma 2^k \lambdamin}}} = O\prn*{\sqrt{1 + \frac{\opnorm{\hess f(y)}}{\sigma \min\{\lambda,\blambda\}}}}$, giving the claimed bound on Hessian-vector product count.

	Next, we assume that $f$ is has an $(H,\nu)$-\Holder Hessian and  argue that the algorithm's output $x,\lambda$ satisfies a movement bound (unless $\lazyflag$ is True and $\lambda = \blambda$).  To do so, we first establish an upper bound on the returned $\lambda$. 
	Let 
\begin{equation*}
		\tw = \argmin_{v}\crl*{v^{\T}\grad f(y) + \half v^{\T} \hess f(y) v + \frac{H}{(2+\nu)}\norm{v}^{(2+\nu)\sigma}}
\end{equation*}
	and note that $\tw = -(\hess f(y) + \tlambda I)^{-1} \grad f(y)$ for $\tlambda = \frac{H}{\sigma} \norm{\tw}^\nu$. Let us show that the MS condition check in \cref{line:newton-cr-ms-check} must succeed when for   $\lambda_k \ge \tlambda$. Let $w^\star = -(\hess f(y) + \lambda_k I)^{-1} \grad f(y)$ denote the exact regularized Newton step corresponding to $\lambda_k$, let $w_i$ be the output of the corresponding MinRes run, and let 
	\begin{equation*}
		r_i = (\hess f(y) + \lambda_k I)w_i + \grad f(y) 
	\end{equation*}
	be the corresponding residual. Note that, by \Cref{lem:ms-via-second-order-smoothness} we have
	\begin{equation}\label{eq:ms-condition-equivalent-form}
		\norm*{x - \prn*{y-\frac{1}{\lambda_k}\grad f(x)}} \le \frac{1}{\lambda_k} \prn*{ \norm{r_i} + \frac{H}{2}\norm{w_i}^{1+\nu} }.
	\end{equation}
	Moreover
	\begin{equation*}
		\frac{H}{2}\norm{w_i}^{1+\nu} = \frac{\sigma\tlambda}{2\norm{\tw}^\nu} \norm{w_i}^{1+\nu} \le \frac{\lambda_k \sigma}{2\norm{\tw}^\nu} \norm{w_i}^{1+\nu},
	\end{equation*}
	and
	\begin{equation*}
		\norm{w_i} \overle{(i)} \norm{w^\star} \overle{(ii)} \norm{\tw}
	\end{equation*}
	due to $(i)$ \Cref{lem:cr-properties} and $(ii)$ \Cref{lem:reg-newton-step-properties} and the fact that $\lambda_k \ge \tlambda$. 
	Therefore, for $\nu\in[0,1]$ we have $\frac{H}{2}\norm{w_i}^{1+\nu}  \le \frac{\lambda_k \sigma}{2} \norm{w_i}$, and $\norm{r_i}\le \frac{\lambda_k\sigma}{2}\norm{w_i}$ admits an identical bound by the MinRes termination condition. 
	Substituting back into~\eqref{eq:ms-condition-equivalent-form}, we conclude that
	\begin{equation*}
		\norm*{x - \prn*{y-\frac{1}{\lambda_k}\grad f(x)}} \le \frac{1}{\lambda_k} \prn*{ \norm{r_i} + \frac{H}{2}\norm{w_i}^{1+\nu}  } \le \sigma \norm{w_i} = \sigma \norm{x-y}
	\end{equation*}
	and consequently reaching $\lambda_k \ge \tlambda$ ensures that the MS condition holds. However, the algorithm returns a value of $\lambda$ such that for $\lambda_k = \lambda/2$ the MS condition check fails, meaning that $\lambda/2 < \tlambda$. 
	
	It remains to argue that when \Cref{alg:adaptive-msn-cg-step} returns with $\lambda \le 2\tlambda$, an appropriate movement bound holds. To that end, recall that (by $\lambda$-strong convexity of the quadratic subproblem)
	\begin{equation*}
		\norm{w^\star} - \norm{w_i} \le \norm{w_i - w^\star} \le \frac{1}{\lambda}\norm{r_i} \le \frac{\sigma}{2}\norm{w_i}
	\end{equation*}
	and consequently
	\begin{equation*}
		\norm{w_i} \ge \frac{2}{3}\norm{w^\star}.
	\end{equation*}
	Moreover, since $\lambda \le 2\tlambda $, we have
	\begin{equation*}
		\norm{w^\star} \overge{(i)} \frac{1}{2} \norm{\tw} \overeq{(ii)} \frac{1}{2}\left(\frac{\sigma\tlambda}{H}\right)^{\frac{1}{\nu}} \overge{(iii)} \frac{1}{2}\left(\frac{\sigma\lambda}{2H}\right)^{\frac{1}{\nu}},
	\end{equation*}
	due to $(i)$ \Cref{lem:reg-newton-step-properties} and $\lambda \le 2\tlambda$, $(ii)$ the definition of $\tlambda$ and $(iii)$ $\lambda \le 2\tlambda$ again. Combining the last two displays, we have
	\begin{equation*}
		\norm{x-y} = \norm{w_i} \ge \frac{1}{3}\left(\frac{\sigma\lambda}{2H}\right)^{\frac{1}{\nu}} \ge  \left(\frac{\lambda}{c^{1+\nu}}\right)^{1/\nu}
		~~\mbox{for}~~c=(6H/\sigma)^{\frac{1}{1+\nu}},
	\end{equation*}
	as required.
\end{proof}

\restateCorrAmsnFO*

\begin{proof}
	Throughout the proof, we let $T$ denote the index of the first iteration of \Cref{alg:optms} for which $f(x_T) \le f(\xopt) + \epsilon$; the claimed bound on total number of iterations is an immediate corollary of both~\Cref{thm:main} and~\Cref{thm:amsn-fo}. 
	
	We now bound the complexity of~\Cref{alg:adaptive-msn-cg-step} with an approach similar to the proof of~\Cref{coro:adaptive-msn-step}. 
	To do so, we categorize all iterations into the following two cases: $(i)$ $t>1$ and (since $\lazyflag$ is true) $\lambda_t \ge \blambda_t$ or $(ii)$  the first iteration $t=1$. Now using~\Cref{thm:amsn-fo}, we know that in case $(i)$ the number of Hessian-vector product and gradient evaluations  is bounded by 
	\begin{align*}
		N_{(i)}& \stackrel{}{=}O\prn*{\sum_{t=2}^{\T}\prn*{\sqrt{1 + \frac{\opnorm{\hess f(y_t)}}{\sigma \blambda_t}}+\log\frac{\lambda_t}{\blambda_t}}} \stackrel{(a)}{=}O\prn*{\sum_{t=2}^{\T}\prn*{\sqrt{1 + \frac{L}{\blambda_t}}+\log\frac{L}{\blambda_t}}}\\
	& =O\prn*{\sum_{t=2}^{\T}\prn*{1+\sqrt{\frac{L}{\blambda_t}}}}
	\stackrel{(b)}{=} O\prn*{T+\sqrt{L A_T}},
	\end{align*}
where we use $(a)$ that $\opnorm{\hess f(y_t)}\le L$ by the assumption of $L$-Lipschitz gradient, and that either $\lambda_t = \blambda_t$ or $\lambda_t = O(L)$ by the movement bound guaranteed from \Cref{thm:amsn-fo} (since $L$-Lipschitz gradient means $(L,0)$-Lipschitz Hessian), and $(b)$ the bound~\eqref{eq:increase-A-all} from~\Cref{lem:increase}. Note that $T=O\prn*{\sqrt{\frac{L\norm{x_0-x_\star}^2}{\epsilon}}}$ due to the iteration count bound for $(L,0)$-\Holder Hessian. Moreover, noting that $A_T = O(A_{T-1})$ by \Cref{lem:A-mult-stability} and that $A_{T-1} = O(\norm{x_0-\xopt}^2/\epsilon)$ as argued in the proof of \Cref{coro:adaptive-msn-step}, we have $\sqrt{L A_T}=O\prn*{\sqrt{\frac{L\norm{x_0-x_\star}^2}{\epsilon}}}$ as well. Therefore,
\begin{equation*}
	N_{(i)} = O\prn*{\sqrt{\frac{L\norm{x_0-x_\star}^2}{\epsilon}}}.
\end{equation*}

For case $(ii)$, \Cref{thm:amsn-fo} gives allows us to bound the number of first-order operations by
\begin{align*}
		N_{(ii)}& =O\left(\sqrt{1+\frac{\opnorm{\hess f(x_0)}}{\sigma\min(\blambda_0,\lambda_1)}}+\abs*{\log\frac{\lambda_1}{\blambda_0}}\right)\\
	&  \stackrel{}{=} O\left(\sqrt{1+\frac{\opnorm{\hess f(x_0)}}{\blambda_0}}+\log\frac{\lambda_1}{\blambda_0}+\sqrt{1+\frac{\opnorm{\hess f(x_0)}}{\lambda_1}}+\log\frac{\blambda_0}{\lambda_1}\right)\\
	&  \stackrel{}{=} O\left(\sqrt{\frac{L}{\blambda_0}}+\sqrt{\frac{L\norm{x_0-x_\star}^2}{\epsilon}}+\log\frac{\blambda_0}{L}\right),
\end{align*}
 where for the last equality we use $\log(\blambda_0/\lambda_1)\le \log(\blambda_0/L)+\log(L/\lambda_1)$ and the bounds $\sqrt{\frac{L}{\lambda_1}} = O(\sqrt{L A_1}) =  O\left(\sqrt{\frac{L\norm{x_0-x_\star}^2}{\epsilon}}\right)$ and $\lambda_1=O(L)$ as in previous case. Summing up the $N_{(i)}$ and $N_{(ii)}$ yields the the claimed bound.
\end{proof}

\subsection{Auxiliary results}\label{apdx:solvers-aux}

Here, we list technical results invoked throughout the section.

\begin{lemma}\label{lem:ms-sc-bounds}
	Let $f:\R^d\to\R^d$ be convex, and suppose that for some $\sigma\in(0,1)$, $x,y\in\R^d$ and $\lambda>0$ the MS condition
	\begin{equation*}
		\norm*{ x - \prn*{y-\tfrac{1}{\lambda}\grad f(x)} } \le \sigma \norm{x-y}
	\end{equation*}
	holds. 
	Then, for any $\xopt\in\R^d$, we have
	\begin{equation*}
		f(x) \le f(\xopt) + \frac{\lambda}{2}\norm{\xopt-y}^2 - \frac{\lambda(1-\sigma^2)}{2}\norm{x-y}^2.
	\end{equation*}
	Therefore
	\begin{equation*}
		\lambda \ge \frac{2(f(x)-f(\xopt))}{\norm{\xopt-y}^2}
		~~\mbox{and, if $f(x) \ge f(\xopt)$,}~~\norm{x-y} \le \frac{1}{\sqrt{1-\sigma^2}}\norm{\xopt-y}.
	\end{equation*}
\end{lemma}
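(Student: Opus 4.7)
The plan is to reinterpret the MS condition as a bound on the gradient of the proximal objective and then apply strong convexity of that objective. Define
\[
F(x') \defeq f(x') + \frac{\lambda}{2}\norm{x' - y}^2.
\]
Since $f$ is convex, $F$ is $\lambda$-strongly convex. Moreover, $\grad F(x) = \grad f(x) + \lambda (x - y)$, so the MS hypothesis is exactly
\[
\norm{\grad F(x)} = \lambda \norm*{x - \prn*{y - \tfrac{1}{\lambda}\grad f(x)}} \le \lambda \sigma \norm{x-y}.
\]

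Next, I will apply $\lambda$-strong convexity of $F$ at the pair $(x,\xopt)$:
\[
F(\xopt) \ge F(x) + \grad F(x)^{\T}(\xopt - x) + \frac{\lambda}{2}\norm{\xopt - x}^2.
\]
Rearranging and using Cauchy--Schwarz together with the MS bound yields
\[
F(x) - F(\xopt) \le \grad F(x)^{\T}(x - \xopt) - \frac{\lambda}{2}\norm{x - \xopt}^2 \le \lambda \sigma \norm{x-y}\,\norm{x-\xopt} - \frac{\lambda}{2}\norm{x-\xopt}^2.
\]
Applying Young's inequality $ab \le \tfrac{1}{2}a^2 + \tfrac{1}{2}b^2$ with $a = \sqrt{\lambda}\,\sigma\norm{x-y}$ and $b = \sqrt{\lambda}\,\norm{x-\xopt}$ causes the $\norm{x-\xopt}^2$ term to cancel exactly, leaving $F(x) - F(\xopt) \le \tfrac{\lambda\sigma^2}{2}\norm{x-y}^2$. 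Substituting the definition of $F$ and rearranging gives the displayed inequality.

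The two corollaries follow immediately: for the first, drop the (non-positive) $-\tfrac{\lambda(1-\sigma^2)}{2}\norm{x-y}^2$ term and rearrange; for the second, when $f(x) \ge f(\xopt)$ the left-hand side is nonnegative, so $\tfrac{\lambda(1-\sigma^2)}{2}\norm{x-y}^2 \le \tfrac{\lambda}{2}\norm{\xopt-y}^2$, which gives the claimed bound on $\norm{x-y}$. There is no real obstacle here; the only subtlety is the exact Young's inequality splitting that makes the quadratic in $\norm{x-\xopt}$ cancel, which is what enables the clean $(1-\sigma^2)$ factor rather than a looser constant.
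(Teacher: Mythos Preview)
Your proof is correct and follows essentially the same approach as the paper: define $F(x')=f(x')+\tfrac{\lambda}{2}\norm{x'-y}^2$, reinterpret the MS condition as $\norm{\grad F(x)}\le \lambda\sigma\norm{x-y}$, and use $\lambda$-strong convexity of $F$ to obtain $F(x)-F(\xopt)\le \tfrac{\lambda\sigma^2}{2}\norm{x-y}^2$. The only cosmetic difference is that the paper invokes the standard consequence of strong convexity $F(x)-F(\xopt)\le \tfrac{1}{2\lambda}\norm{\grad F(x)}^2$ directly (valid for any $\xopt$ since $F(\xopt)\ge \min F$), whereas you derive the same bound from the first-order strong convexity inequality plus a Young's inequality step; both arrive at the identical estimate.
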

\begin{proof}
	Let $F(x') = f(x') + \frac{\lambda}{2}\norm{x'-y}^2$. Since $F$ is $\lambda$-strongly convex, we have
	\begin{equation*}
		f(x) + \frac{\lambda}{2}\norm{x-y}^2  - f(\xopt) - \frac{\lambda}{2}\norm{\xopt-y}^2
		=
		F(x) - F(\xopt) \le \frac{\norm{\grad F(x)}^2}{2\lambda}
	\end{equation*}
	for every $\xopt\in\R^d$. 
	Moreover, the MS condition yields
	\begin{equation*}
		\frac{\norm{\grad F(x)}^2}{2\lambda} = \frac{\lambda}{2} \norm*{ x - \prn*{y-\tfrac{1}{\lambda}\grad f(x)} }^2 \le \frac{\lambda\sigma^2}{2} \norm{x-y}^2.
	\end{equation*}
	The lemma follows by substituting back and rearranging.
\end{proof}

\begin{lemma}\label{lem:cr-properties}
	Let $A\in\R^{d\times d}$ be a positive definite symmetric matrix, let $b\in\R^d$, and let $w^\star = A^{-1} b$. The iterates $\{w_t\}$ and residuals $\{r_t=Aw_t - b\}$ of the Conjugate Residuals/MinRes  algorithm~\cite{stiefel1955relaxationsmethoden,fong2012cg} for minimizing $\norm{Aw-b}$ satisfy
	\begin{enumerate}
		\item $\norm{w_t}$ is non-decreasing in $t$ with $\norm{w_\infty} = \norm{w^\star}$,
		\item $\norm{r_t}$ is non-increasing in $t$ with $\norm{r_\infty}= 0$, 
		\item $\norm{r_t} = O\prn*{ \frac{\opnorm{A}\norm{w^\star}}{t^2} }$.
	\end{enumerate}
\end{lemma}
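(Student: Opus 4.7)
The plan is to build on the standard variational characterization of MinRes/Conjugate Residuals: since $w_0 = 0$, for all $t \ge 1$ the iterate $w_t$ lies in the Krylov subspace $K_t \defeq \mathrm{span}\{b, Ab, \ldots, A^{t-1}b\}$ and is the unique element of $K_t$ minimizing $\|Aw - b\|$. This either follows directly from the three-term recurrences in the algorithm or from the inductively-verified conjugacy relations $\langle p_i, A^2 p_j\rangle = 0$ and $\langle r_i, A r_j\rangle = 0$ for $i \neq j$, which together imply that the minimizer of $\|Aw-b\|$ over $K_t$ is obtained by the update $w_{t+1} = w_t - \tfrac{\langle r_t, s_t\rangle}{\|q_t\|^2}p_t$.

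Claim (2) is then immediate: since $K_t \subseteq K_{t+1}$ the minimal residual norm is non-increasing, and because the minimal polynomial of $A$ has degree at most $d$, we have $w^\star = A^{-1}b \in K_d$, so $w_d = w^\star$ and $r_d = 0$. For claim (1), the identity $\|w_\infty\| = \|w^\star\|$ follows from $r_\infty = 0$ and invertibility of $A$. The monotonicity of $\|w_t\|$ is the delicate part; I plan to prove it by transforming coordinates via $\tilde{w}_t \defeq A^{1/2} w_t$, so that the MinRes iteration on $Aw = b$ becomes the standard conjugate gradient iteration on the SPD system $A\tilde{w} = A^{1/2}b$ with starting point $0$. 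It is classical that CG iterates have monotonically increasing $A$-norms $\|\tilde{w}_t\|_A$, which in the original coordinates reads $\|A w_t\|$ is non-decreasing. To obtain monotonicity of $\|w_t\|$ itself (rather than $\|Aw_t\|$), I will use the MinRes Galerkin condition $r_t \perp A K_t$ to write $\langle Aw_t, r_t\rangle = 0$, giving $\|Aw_t\|^2 = \langle Aw_t, b\rangle$, and combine this with the explicit update direction $w_{t+1} - w_t \propto p_t$ to show that $\langle w_t, w_{t+1} - w_t\rangle \ge 0$ by an induction that tracks the signs of $\langle r_t, s_t\rangle$ and of $\langle w_t, p_t\rangle$.

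For claim (3), I will use the polynomial characterization $\|r_t\| = \min_{p:\, p(0)=1,\ \deg p \le t} \|p(A)b\|$. Writing $b = Aw^\star$ and constructing an explicit degree-$t$ polynomial $p$ with $p(0)=1$ that is small on $[0, \|A\|_{\mathrm{op}}]$ via shifted/scaled Chebyshev polynomials (as carried out in~\cite{lee2021geometric} for the gradient-norm minimization version of accelerated methods) yields the bound $\|r_t\| = O(\|A\|_{\mathrm{op}} \|w^\star\|/t^2)$; alternatively one can cite this result from~\cite{lee2021geometric} directly.

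The main obstacle is the monotonicity of $\|w_t\|$ in claim (1): unlike the residual monotonicity, it is not immediate from the variational principle alone, and all routes I see require either the CG-in-disguise reduction or a careful direct manipulation of the CR recurrences with an inductive control on the signs of the inner products appearing in the step sizes.
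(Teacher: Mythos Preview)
Your treatment of parts (2) and (3) is fine and matches the paper's approach. The paper cites \cite{fong2012cg} (their Theorem 2.4) for the residual monotonicity, and for the $O(\opnorm{A}\norm{w^\star}/t^2)$ bound it does exactly what you suggest as the alternative: it invokes the gradient-norm result of \cite{lee2021geometric} applied to the quadratic $h(x)=\tfrac12 x^\top A x - b^\top x$, observes the iterates of that method lie in the Krylov subspace, and concludes by MinRes optimality. Your polynomial-construction route is equivalent; just be careful that the polynomial condition is not ``$p$ small on $[0,\opnorm{A}]$'' but rather ``$\lambda p(\lambda)$ small on $[0,\opnorm{A}]$,'' since $b=Aw^\star$.

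Part (1) is where your proposal has a real gap. The paper does not attempt a self-contained proof; it simply cites Theorem 2.3 of \cite{fong2012cg}. Your CG reduction is correct as far as it goes --- MinRes on $Aw=b$ is indeed CG on $A\tilde w = A^{1/2}b$ under $\tilde w = A^{1/2}w$, and CG iterates from $0$ do have nondecreasing $A$-norm (by $A$-conjugacy of the search directions) and nondecreasing $2$-norm (Steihaug). But these yield monotonicity of $\norm{Aw_t}$ and $\norm{A^{1/2}w_t}$ respectively, not of $\norm{w_t}$; you acknowledge this and then propose a separate inductive argument. That fallback is too vague to evaluate: the identity $\norm{Aw_t}^2=\langle Aw_t,b\rangle$ from the Galerkin condition is correct, but you do not say how it combines with the recurrences to give $\langle w_t, w_{t+1}-w_t\rangle\ge 0$, and the sign-tracking you allude to is nontrivial because the $p_i$ are only $A^2$-conjugate, not orthogonal, so $\langle p_i,p_t\rangle$ has no obvious sign. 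The Fong--Saunders proof goes through the Lanczos/QR representation of MinRes rather than the CR recurrences directly; if you want a self-contained argument you will likely need that machinery. Otherwise, citing \cite{fong2012cg} as the paper does is the cleanest route.
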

\begin{proof}
	The first two parts of the lemma are Theorems 2.3 and 2.4 of \citet{fong2012cg}, respectively. To show the third part, we cite \citet{lee2021geometric} which give a gradient method that, for any $L$-smooth convex function $h$ with minimizer $\xopt$, produces iterates $x_t$ such that $\norm{\grad h(x_t)} = O( L\norm{\xopt-x_0} / t^2)$~\cite[Corollary 1]{lee2021geometric}. Applying this method to $h(x) = \half x^{\T} A x - b^{\T} x$ with $x_0=0$, which is convex and $\opnorm{A}$-smooth with minimizer $w^\star$, guarantees $\norm{A x_t - b} = \norm{\grad h(x_t)} = O\prn*{ {\opnorm{A}\norm{w^\star}}/{t^2} }$. Moreover, we note that $x_t$ is in the linear span of $\grad h(0), \grad h(x_1), \ldots \grad h(x_{t-1})$ and consequently in the Krylov subspace $\mathrm{span}(b, Ab, \ldots, A^{t-1}b)$. Therefore $\norm{r_t} \le \norm{A x_t - b}$ by definition of the Conjugate Residuals/MinRes method. 
\end{proof}

\begin{lemma}\label{lem:reg-newton-step-properties}
	Let $A\in\R^{d\times d}$ be positive semidefinite, let $b\in\R^d$ and let $\Delta(\lambda) = \norm{(A+\lambda I)^{-1}b}$. Then, for any $\lambda_1 \le \lambda_2$ we have
	\begin{equation*}
		\frac{\lambda_1}{\lambda_2} \Delta(\lambda_1) \le \Delta(\lambda_2) \le \Delta(\lambda_1).
	\end{equation*}
\end{lemma}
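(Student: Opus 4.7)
The plan is to prove both inequalities simultaneously by reducing to a scalar calculation via a simultaneous diagonalization of $A$ and $\lambda I$. Since $A$ is symmetric positive semidefinite, write its spectral decomposition $A = U\,\mathrm{diag}(\mu_1,\ldots,\mu_d)\,U^\T$ with $\mu_i \geq 0$, and set $c = U^\T b$. Because $U$ is orthogonal it preserves norms, so
\[
\Delta(\lambda)^2 \;=\; \bigl\|(A+\lambda I)^{-1} b\bigr\|^2 \;=\; \sum_{i=1}^{d} \frac{c_i^2}{(\mu_i + \lambda)^2}.
\]
This reduces the lemma to a termwise statement about positive scalars, indexed by $i$.

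For the upper bound $\Delta(\lambda_2)\leq\Delta(\lambda_1)$: each summand $c_i^2/(\mu_i+\lambda)^2$ is a nonincreasing function of $\lambda\geq 0$, so summing over $i$ yields $\Delta(\lambda_2)^2\le\Delta(\lambda_1)^2$ whenever $\lambda_1\le\lambda_2$. For the lower bound, the key inequality is
\[
\frac{\mu_i+\lambda_1}{\mu_i+\lambda_2}\;-\;\frac{\lambda_1}{\lambda_2} \;=\; \frac{\mu_i(\lambda_2-\lambda_1)}{\lambda_2(\mu_i+\lambda_2)} \;\geq\; 0,
\]
valid for all $\mu_i\geq 0$ and $0<\lambda_1\leq\lambda_2$. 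Rearranging gives $(\mu_i+\lambda_2)^{-2}\geq (\lambda_1/\lambda_2)^2(\mu_i+\lambda_1)^{-2}$. Multiplying by $c_i^2$ and summing over $i$ yields $\Delta(\lambda_2)^2\geq(\lambda_1/\lambda_2)^2\Delta(\lambda_1)^2$, and taking square roots gives the claimed lower bound.

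There is no real obstacle here: the proof is a one-line diagonalization followed by an elementary algebraic inequality about shifted reciprocals. The only edge case worth noting is $\lambda_1=0$, for which the lower bound is trivially $0\leq\Delta(\lambda_2)$, so the statement is consistent (and the proof above handles $\lambda_1>0$, which is the only regime used when invoking the lemma in the analyses of \Cref{alg:adaptive-msn-step} and \Cref{alg:adaptive-msn-cg-step}).
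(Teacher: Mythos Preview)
Your proof is correct and takes essentially the same approach as the paper: the paper phrases the argument via the Loewner ordering $\frac{\lambda_1}{\lambda_2}(A+\lambda_2 I)\preceq A+\lambda_1 I\preceq A+\lambda_2 I$ together with the fact that commuting PSD matrices satisfy $M_1\preceq M_2\Rightarrow\|M_1^{-1}b\|\ge\|M_2^{-1}b\|$, which is exactly your termwise scalar inequality after simultaneous diagonalization.
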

\begin{proof}
	This is an immediate consequence of
	\begin{equation*}
		\frac{\lambda_1}{\lambda_2}( A + \lambda_2 I) \preceq
		A + \lambda_1 I \preceq 
		A + \lambda_2 I
	\end{equation*}
	and the fact that if $0 \prec M_1 \preceq M_2$ and $M_1, M_2$ have the same eigenvectors, then $\norm{M_1^{-1}b} \ge \norm{M_2^{-1}b}$ for all $b$, since $M_1 \preceq M_2$ implies that the eigenvalues of $M_2$ majorize the eigenvalues of $M_1$. 
\end{proof}

\begin{lemma}\label{lem:ms-via-second-order-smoothness}
	Let $f:\R^d\to\R$ and $x,y\in\R^d$, and suppose that $\hess f$ is $(H,\nu)$-\Holder continuous for some $\nu\in[0,1]$ in a ball of radius $\norm{x-y}$ around $y$. Then, for any $\lambda$,
	\begin{equation*}
		\norm*{x - \prn*{y- \frac{1}{\lambda}\grad f(x)}} \le 
		\frac{1}{\lambda}\norm{ (\hess f(y) + \lambda I) (x-y) + \grad f(y)} + \frac{H}{(1+\nu)\lambda}\norm{x-y}^{1+\nu}.
	\end{equation*}
\end{lemma}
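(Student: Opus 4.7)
The plan is to reduce the left-hand side to a residual-plus-Taylor-error decomposition and then bound the Taylor error via the \Holder continuity assumption. Specifically, I would begin by noting that
\[
\lambda\prn*{x - \prn*{y - \tfrac{1}{\lambda}\grad f(x)}} = \lambda(x-y) + \grad f(x),
\]
and then add and subtract the quantity $\hess f(y)(x-y) + \grad f(y)$ to rewrite this as
\[
\lambda(x-y) + \grad f(x) = \bigl[(\hess f(y) + \lambda I)(x-y) + \grad f(y)\bigr] + \bigl[\grad f(x) - \grad f(y) - \hess f(y)(x-y)\bigr].
\]
Applying the triangle inequality and dividing by $\lambda$ reduces the lemma to bounding the second bracket (the first-order Taylor remainder of $\grad f$) by $\frac{H}{1+\nu}\norm{x-y}^{1+\nu}$.

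For the Taylor remainder bound, I would use the integral representation
\[
\grad f(x) - \grad f(y) - \hess f(y)(x-y) = \int_0^1 \bigl[\hess f(y + t(x-y)) - \hess f(y)\bigr](x-y)\, dt,
\]
which is valid assuming sufficient regularity of $f$. Taking norms inside the integral and invoking the local $(H,\nu)$-\Holder continuity of $\hess f$ on the ball of radius $\norm{x-y}$ around $y$, each point $y + t(x-y)$ with $t \in [0,1]$ lies in this ball, so $\opnorm{\hess f(y + t(x-y)) - \hess f(y)} \le H (t\norm{x-y})^{\nu}$. Combining this with $\norm{(x-y)} \le \norm{x-y}$ and integrating $\int_0^1 t^\nu\, dt = \frac{1}{1+\nu}$ yields the desired bound
\[
\norm{\grad f(x) - \grad f(y) - \hess f(y)(x-y)} \le \frac{H}{1+\nu}\norm{x-y}^{1+\nu}.
\]

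There is no real obstacle here; the only subtlety is that the lemma as stated requires $\hess f$ only to be \Holder on a ball of radius $\norm{x-y}$ around $y$, and this is precisely what is needed to control $\hess f$ at points $y + t(x-y)$ for $t \in [0,1]$, so the local assumption suffices. Substituting this bound back and dividing by $\lambda$ completes the proof.
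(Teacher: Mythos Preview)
Your proof is correct and follows essentially the same approach as the paper: the paper also defines $\delta = \grad f(x) - \grad f(y) - \hess f(y)(x-y)$ and $r = (\hess f(y) + \lambda I)(x-y) + \grad f(y)$, observes $x - (y - \tfrac{1}{\lambda}\grad f(x)) = \tfrac{1}{\lambda}(r+\delta)$, and applies the triangle inequality together with the Taylor remainder bound $\norm{\delta} \le \frac{H}{1+\nu}\norm{x-y}^{1+\nu}$. Your version is slightly more explicit in deriving this remainder bound via the integral representation, but the argument is identical.
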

\begin{proof}
	Let
	\begin{equation*}
		\delta = \grad f(x) - \grad f(y) - \hess f(y)(x-y).
	\end{equation*}
	The local $H$-\Holder continuity of $\hess f$ around $y$ yields
	\begin{equation*}
		\norm{\delta} \le \frac{H}{1+\nu}\norm{x-y}^{1+\nu}.
	\end{equation*}
	Moreover, for 
	\begin{equation*}
		r= (\hess f(y) + \lambda I) (x-y) + \grad f(y)
	\end{equation*} 
	algebraic manipulation yields
	\begin{equation*}%
		x - \prn*{y-\frac{1}{\lambda}\grad f(x)}
		= 
		\frac{1}{\lambda}\prn*{r+\delta},
	\end{equation*}
	and the lemma holds via the triangle inequality.
\end{proof}

For the following lemma, recall the notation $\SsupT = \{t\le T \mid \lambda_t > \blambda_t\}$.

\begin{lemma}\label{lem:yet-another-reverse-holder}
	For every $T>0$, $\nu\in[0,1]$ and $\xopt\in \R^d$ such that $f(x_T) \ge f(\xopt)$, the iterates of \Cref{alg:optms} with $\sigma \in (0.01,0.99)$ and $\alpha \in (1.01, O(1))$ satisfy  
	\begin{equation*}
		\sum_{t \in \SsupT} \prn*{\frac{\norm{\tx_{t} - y}^\nu}{\blambda_{t}}}^{2/(4+3\nu)} = O\prn*{\norm{x_0 - \xopt}^{2\nu/(4+3\nu)} A_T^{2/(4+3\nu)}}.
	\end{equation*}
\end{lemma}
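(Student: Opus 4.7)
My plan is to combine \Cref{prop:main-potential} and \Cref{lem:increase} via a carefully-chosen H\"older inequality, where the key innovation is controlling a sum of negative powers of $A_t$ by an integral comparison. Let $v_t := \norm{\tx_t - y_{t-1}}$ and $p := 2/(4+3\nu)$, and consider the algebraic decomposition
\begin{equation*}
\left(\frac{v_t^\nu}{\blambda_t}\right)^p = \left(v_t^2 A_t \blambda_t\right)^{\nu p/2} \cdot A_t^{-\nu p/2} \cdot \blambda_t^{-p(\nu+2)/2}.
\end{equation*}
Applying H\"older's inequality to $\sum_{t \in \SsupT}$ with conjugate exponents $q_1 = (4+3\nu)/\nu$ and $q_2 = (4+3\nu)/(2(2+\nu))$ separates the sum into
\begin{equation*}
\sum_{t\in\SsupT}\left(\frac{v_t^\nu}{\blambda_t}\right)^p \le \left(\sum_{t\in\SsupT} v_t^2 A_t \blambda_t\right)^{\nu/(4+3\nu)} \left(\sum_{t\in\SsupT} A_t^{-\nu/(2(2+\nu))}\blambda_t^{-1/2}\right)^{2(2+\nu)/(4+3\nu)}.
\end{equation*}
The first factor will be bounded by $O(\norm{x_0-\xopt}^{2\nu/(4+3\nu)})$ via \Cref{prop:main-potential}, where the hypothesis $f(x_T) \ge f(\xopt)$ enters to guarantee $D_0 - A_T E_T \le D_0$.

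The crux of the argument is bounding the second factor by $O(A_T^{1/(2+\nu)})^{2(2+\nu)/(4+3\nu)} = O(A_T^{2/(4+3\nu)})$. To this end I would introduce the partial sums $B_t := \sum_{s=1}^{t} 1/\sqrt{\blambda_s}$ (with $B_0 := 0$) and apply eq.~\eqref{eq:increase-A-all} of \Cref{lem:increase} at each time $t$ in place of $T$, which yields $\sqrt{A_t} \ge \Omega(1)\cdot B_t$ and hence $A_t^{-a} \le O(1)\cdot B_t^{-2a}$ for $a := \nu/(2(2+\nu)) < 1/2$. Upper-bounding the sum over $\SsupT$ by the sum over $[T]$, and using that $B\mapsto B^{-2a}$ is decreasing to compare with a Riemann integral,
\begin{equation*}
\sum_{t\in\SsupT} A_t^{-a}\blambda_t^{-1/2} \le O(1)\sum_{t\in[T]} \frac{B_t - B_{t-1}}{B_t^{2a}} \le O(1) \int_0^{B_T}\frac{dB}{B^{2a}} = O(B_T^{1-2a}) = O(A_T^{1/(2+\nu)}),
\end{equation*}
where the improper integral converges at $B=0$ precisely because $2a < 1$.

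The main obstacle is that the negative powers of $A_t$ cannot be handled by a naive bound $A_t^{-a} \le A_{t_0}^{-a}$ at $t_0 := \min\SsupT$, which would leave an uncontrolled factor $(A_T/A_{t_0})^{a\cdot 2(2+\nu)/(4+3\nu)}$; since the set $\SsupT$ may begin as early as $t=2$, this ratio is not universally $O(1)$. The integral-comparison argument sidesteps this issue by exploiting the fact that each increment $1/\sqrt{\blambda_t}$ of $B_t$ simultaneously drives the growth of $\sqrt{A_t}$, so the negative power of $A_t$ is damped exactly at iterations with small $\blambda_t$---which are the iterations contributing most to the sum.
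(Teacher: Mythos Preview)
Your proposal is correct and follows essentially the same route as the paper: split the sum via H\"older into the ``potential'' factor $\sum_{t\in\SsupT} v_t^2 A_t\blambda_t$ (bounded by $O(\norm{x_0-\xopt}^2)$ via \Cref{prop:main-potential} and the hypothesis $f(x_T)\ge f(\xopt)$) and the ``growth'' factor $\sum_t A_t^{-\nu/(2(2+\nu))}\blambda_t^{-1/2}$, then control the latter using $\sqrt{A_t}\ge\Omega(1)\cdot B_t$ from \eqref{eq:increase-A-all} and a sum-to-integral comparison. The paper phrases the first step as a reverse H\"older (which is the same inequality in different clothing) and replaces your Riemann-integral bound by an equivalent discrete telescoping identity $\sum_t (B_t-B_{t-1})/B_t^{\nu/(2+\nu)}\le 2B_T^{2/(2+\nu)}$, but the structure and the exponents match exactly.
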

\begin{proof}
	The case $\nu=0$ follows immediately from eq.~\eqref{eq:increase-A} in \Cref{lem:increase}. For $\nu\in(0,1]$, define $u_t = \frac{\norm{\tx_{t} - y}^{2}}{(\blambda_{t})^{2/\nu}} \indic{t\in\SsupT}$ and $v_t = (\blambda_{t})^{1+2/\nu} A_{t}$. The reverse H\"older inequality gives, for any $q>1$,
	\begin{equation}\label{eq:yet-another-reverse-holder}
		\prn*{\sum_{t \le T} u_t^{1/q}}^q \prn*{\sum_{t \le T} v_t^{-1/(q-1)}}^{-(q-1)} \le \sum_{t\le T} u_t v_t.
	\end{equation}
	Substituting back the definitions of $u_t$ and $v_t$ we have
	\begin{equation}\label{eq:utvt-bound}
		\sum_{t\le T} u_t v_t = \sum_{t \in \SsupT} \norm{\tx_{t} - y}^{2} A_{t} \blambda_{t} = O(\norm{x_0 - \xopt}^2),
	\end{equation}
	with the last transition due to eq.~\eqref{eq:main-potential-implied} in \Cref{prop:main-potential} and the fact that $f(x_T) \ge f(\xopt)$. 
	Next, we substitute $q=\frac{4+3\nu}{\nu}$ and note that
	\begin{equation*}
		\sum_{t \le T} v_t^{-\nu/(4+2\nu)} = \sum_{t \le T}  \frac{1}{\sqrt{\blambda_{t}} A_{t}^{\nu/(4+2\nu)}}
		= O\prn*{ \sum_{t \le T} \frac{ (\blambda_{t})^{-1/2} }{ \prn*{\sum_{j\le t}  (\blambda_{j})^{-1/2}}^{\nu/(2+\nu)}}}
	\end{equation*}
	with the last transition due to 
	\begin{equation}\label{eq:At-as-sum-of-sqrt-lambda}
		\sqrt{A_{t'}} = \Omega\prn*{\sum_{t\le t'} \frac{1}{\sqrt{\blambda_{t}}}}
	\end{equation}
	for all $t'$ by eq.~\eqref{eq:increase-A-all} in \Cref{lem:increase}.
	Note that for every non-decreasing sequence $0 = B_0 \le B_1 \le B_2 \le \cdots \le B_T$ we have 
	\begin{equation*}
		\begin{aligned}
			\sum_{t\le T} \frac{B_t - B_{t-1}}{B_t^{\nu/(2+\nu)}}
			& \le \sum_{t\le T} \frac{(B_t^{2/(2+\nu)} - B_{t-1}^{2/(2+\nu)})(B_t^{\nu/(2+\nu)}+B_{t-1}^{\nu/(2+\nu)}) }{B_t^{\nu/(2+\nu)}}\\ 
			& \le 2 \sum_{t\le T}(B_t^{2/(2+\nu)} - B_{t-1}^{2/(2+\nu)}) = 2B_T^{2/(2+\nu)}. 
		\end{aligned}
	\end{equation*}
	Substituting $B_t = \sum_{j\le t}  (\blambda_{j})^{-1/2}$, we have
	\begin{equation*}
		\sum_{t \le T} v_t^{-\nu/(4+2\nu)} = O\prn*{\prn*{\sum_{t\le T} \frac{1}{\sqrt{\blambda_{t}}}}^{2/(2+\nu)}} = O(A_T^{1/(2+\nu)}),
	\end{equation*}
	with the final bound again using~\eqref{eq:At-as-sum-of-sqrt-lambda}. This  implies 
	\begin{equation}\label{eq:vt-bound}
		\left(\sum_{t \le T} v_t^{-\nu/(4+2\nu)}\right)^{(4+2\nu)/\nu}  = O(A_T^{2/\nu}).
	\end{equation}
	Substituting $q=(4+3\nu)/\nu$ and the bounds~\eqref{eq:vt-bound} and~\eqref{eq:utvt-bound} into~\eqref{eq:yet-another-reverse-holder} completes the proof.
\end{proof}

\begin{lemma}\label{lem:A-mult-stability}
	For every $T>0$, the sequence $\{A_t\}$ in \Cref{alg:optms} with $\alpha \in (1.01, O(1))$ satisfies $A_{t+1} = O(A_t)$.
\end{lemma}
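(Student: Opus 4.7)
The plan is to bound the increment $A_{t+1} - A_t$ directly using the explicit formula for $\ba_{t+1}$ and a lower bound on $\blambda_{t+1}$ that follows from \Cref{lem:increase}.

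First, I would observe that regardless of which branch of \Cref{alg:optms} is taken, $A_{t+1} \le \bA_{t+1} = A_t + \ba_{t+1}$: in the down branch this is an equality, while in the up branch $A_{t+1} = A_t + \gamma_{t+1}\ba_{t+1}$ with $\gamma_{t+1} \in (0,1)$. It therefore suffices to prove $\ba_{t+1} = O(A_t)$ for all $t \ge 1$. Using the closed form in \cref{line:a-quandratic} together with $\sqrt{1+x} \le 1 + \sqrt{x}$ for $x \ge 0$, one obtains
\begin{equation*}
\ba_{t+1} \;=\; \frac{1 + \sqrt{1 + 4\blambda_{t+1} A_t}}{2\blambda_{t+1}} \;\le\; \frac{1}{\blambda_{t+1}} + \sqrt{\frac{A_t}{\blambda_{t+1}}},
\end{equation*}
so both terms will be under control as soon as we can show $1/\blambda_{t+1} = O(A_t)$.

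The key step is this lower bound on $\blambda_{t+1}$. Applying eq.~\eqref{eq:increase-A-all} of \Cref{lem:increase} at index $t \ge 1$ yields $\sqrt{A_t} \ge \tfrac{\sqrt{\alpha}-1}{4\alpha}\cdot 1/\sqrt{\blambda_t}$, which, using $\alpha \in (1.01, O(1))$, gives $\blambda_t \ge \Omega(1/A_t)$. The update rule of \Cref{alg:optms} (\cref{line:lam-update-1,line:lam-update-2}) then guarantees $\blambda_{t+1} \in \{\blambda_t/\alpha,\ \alpha\blambda_t\}$ and hence $\blambda_{t+1} \ge \blambda_t/\alpha = \Omega(1/A_t)$, since $\alpha = O(1)$. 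Substituting back yields $1/\blambda_{t+1} = O(A_t)$ and $\sqrt{A_t/\blambda_{t+1}} = O(A_t)$, so $\ba_{t+1} = O(A_t)$ and therefore $A_{t+1} = O(A_t)$, as desired.

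There is no real obstacle here: the only point that needs a moment's care is tracking that the update rule for $\blambda$ is applied between iterations (so that $\blambda_{t+1}$ is obtained from $\blambda_t$ by at most one multiplicative adjustment by $\alpha$), and that the constants $\frac{\sqrt{\alpha}-1}{4\alpha}$ from \Cref{lem:increase} and $1/\alpha$ from the update rule are both $\Omega(1)$ when $\alpha \in (1.01, O(1))$. The argument applies for $t \ge 1$, which is exactly the range in which the lemma is invoked (e.g.\ in the proofs of \Cref{coro:adaptive-msn-step} and its first-order counterpart, where it is used as ``$A_T = O(A_{T-1})$'' with $T \ge 2$).
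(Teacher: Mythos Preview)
Your proposal is correct and follows essentially the same approach as the paper: both arguments reduce to showing $1/\sqrt{\blambda_{t+1}} = O(\sqrt{A_t})$ via eq.~\eqref{eq:increase-A-all} of \Cref{lem:increase} together with the update rule $\blambda_{t+1} \ge \blambda_t/\alpha$, and then conclude $A_{t+1} \le \bA_{t+1} = O(A_t)$. The only cosmetic difference is in the algebra: the paper uses the identity $\ba_{t+1} = \sqrt{\bA_{t+1}/\blambda_{t+1}}$ to write $\sqrt{\bA_{t+1}} \le \sqrt{A_t} + 1/\sqrt{\blambda_{t+1}}$ directly, whereas you expand the closed form for $\ba_{t+1}$ and apply $\sqrt{1+x}\le 1+\sqrt{x}$; the remaining steps are identical.
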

\begin{proof}
	Note that
	\begin{equation*}
		\bA_{t+1} - A_t = \ba_{t+1} = \sqrt{\frac{\bA_{t+1}}{\blambda_{t+1}}} 
	\end{equation*}
	and therefore (since $\bA_{t+1} > A_t$)
	\begin{equation*}
		\sqrt{\bA_{t+1}} \le \sqrt{A_t} + \frac{1}{\sqrt{\blambda_{t+1}}}
		\overle{(i)} \sqrt{A_t} + O\prn*{\frac{1}{\sqrt{\blambda_{t}}}} \overle{(ii)} O(\sqrt{A_t})
	\end{equation*}
	due to $(i)$ $\blambda_{t+1} \ge \blambda_{t} / \alpha = \Omega(  \blambda_{t} )$ by the algorithm's construction and $(ii)$ $\frac{1}{\sqrt{\blambda_{t}}} = O(\sqrt{A_t})$ by the last inequality in \Cref{lem:increase}. The proof is complete by noting that $A_{t+1} \le \bA_{t+1}$. 
\end{proof}
\section{Experiments}\label{app:experiments}

This section provides the full details of the experiment we report in \Cref{sec:experiments} (in \Cref{app:experiments-technical-details}), as well as results of the additional experiments: algorithm comparison across additional datasets (\Cref{app:experiments-extra-experiments}), parameter sensitivity of our algorithm (\Cref{app:experiments-parameter-sensitivity}), effect of changing the parameter $M$ in $\oracle[cr]$ (\Cref{app:experiments-tuning-cr}), and the effect of momentum on the worst-case instance for Lipschitz-Hessian functions (\Cref{app:experiments-worst-case}). Finally, we also demonstrate empirically the importance of the momentum damping mechanism in \Cref{alg:optms} (\Cref{app:experiments-momentum-damping-mechanism}).

\subsection{Main experiment details}\label{app:experiments-technical-details}
We report experiments for logistic regression objectives of the form
\begin{align*}
    f(x) \defeq \frac{1}{n}\sum_{i \in [n]} \log \prn*{1+ \exp\prn*{-c_i \phi_i^{\T}x}}, 
\end{align*}
where each $\phi_i \in \R^{d}$ is a feature vector with a corresponding label $c_i \in \{-1,1\}$. 

\paragraph{Implementation details.} 
We now provide the key implementation details for all the algorithms considered in our experiments. For a complete description please refer to the Python implementations submitted with this manuscript. 

\begin{itemize}[leftmargin=*]
	\item \textbf{\Cref{alg:optms}.} Our implementation of \Cref{alg:optms} follows its pseudocode precisely. We keep $\blambdainit=0.1$ throughout and set $\alpha=2$ in all experiments except for those in \Cref{app:experiments-parameter-sensitivity}, where we test how changing it affects performance. 
	
	\newcommand{\iblambda}{\lambda^0}
	
	\item \textbf{\Cref{alg:ms}~\cite{MonteiroS13a}.} A direct implementation of the pseudocode of \Cref{alg:ms} would be quite inefficient, since stating off the bisection with a large interval $[\lambda^\ell, \lambda^h]$ at each iterations will waste many oracle calls. Instead, we implement a \emph{strong baseline} for our bisection-free algorithm by starting each bisection with a guess $\iblambda_{t+1}$ determined by the previous iterations. We construct this guess using the scheme\footnote{We also experimented with the heuristic $\iblambda_{t+1} = \half \lambda_t$, which performed slightly worse.} for updating $\blambda_t$ in \Cref{alg:optms}: if the previous final bisection output $\lambda_{t}$ and the previous initial bisection guess $\iblambda_{t}$ satisfy $\lambda_t > \iblambda_t$, we let $\iblambda_{t+1} = 2 \iblambda_{}$ and otherwise we set $\iblambda_{t+1} = \half \iblambda_{t}$. We take $\iblambda_1 = 0.1$. 
	
	To construct a bisection interval out of the initial guess $\iblambda_{t+1}$, we adopt a strategy similar to the ones used in \Cref{alg:adaptive-msn-step,alg:adaptive-msn-cg-step}. To explain it, define the following terminology. Consider some $\blambda_{t+1}$ and $\lambda_{t+1}$ computed by applying an MS oracle to $y_t$ and $\blambda_{t+1}$, with $y_t$ computed from $\blambda_{t+1}$ as in lines~\ref{line:update-standard-start} and~\ref{line:update-standard-end} of \Cref{alg:ms}. We say that $\blambda_{t+1}$ is \emph{valid} if $\lambda_{t+1} \in [\frac{1}{\rho}\blambda_{t+1}, \blambda_{t+1}]$, that $\lambda_{t+1}$ is \emph{high} if $\lambda_{t+1} < \frac{1}{\rho}\blambda_{t+1}$, and that $\blambda_{t+1}$ is \emph{low} if $\lambda_{t+1} > \blambda_{t+1}$. If $\iblambda_{t+1}$ is valid, we simply use it and there is no need for bisection. Otherwise, if it is low, we take $\lambda^\ell_{t+1} = \iblambda_{t+1}$, and repeatedly double $\iblambda_{t+1}$ until we find some $2^k \iblambda_{t+1}$ that is either valid or high. In the former case we are again done, and in the latter case we set $\lambda^h_{t+1} = 2^k \iblambda_{t+1}$ and continue with the bisection as described in \Cref{alg:ms}, except that (inspired by \Cref{alg:adaptive-msn-step}) at each iteration we take $\blambda_{t+1}$ to be the geometric mean of $\lambda^\ell_{t+1}$ and $\lambda^h_{t+1}$ rather than the arithmetic mean. The case that $\iblambda_{t+1}$ is high is treated analogously, setting $\lambda^h_{t+1} = \iblambda_{t+1}$ and repeatedly halving it until finding $2^{-k}\iblambda_{t+1}$ that is either valid or low. 
	
	Finally, we note that for $\sigma$-MS oracles with $\sigma>0$ the bisection is only guaranteed to succeed when $\rho$ is sufficiently large. The precise value of $\rho$ depends on $\sigma$ and the order of the movement bound guaranteed by the oracle. Instead of attempting a precise calculation, we set $\rho=4$.
	
	\item \textbf{Cubic-regularized Newton Method (CR)~\cite{nesterov2006cubic}.} The method consists of simply iterating $x_{t+1} = \oracle[cr](x_t)$. For when the parameter $M$ in $\oracle[cr]$ is set to $M=0$, the method reduces to the classical \textbf{Newton's method} $x_{t+1} = -[\hess f(x_t)]^{-1} \grad f(x_t)$. 
	
	\item \textbf{Accelerated CR (ACR)~\cite{nesterov2008accelerating}.} We implement \cite[Alg.~4.8]{nesterov2008accelerating} without changes.
	
	\item \textbf{Adaptive ACR~\cite{grapiglia2020tensor}.}  We implement \cite[Alg.~4]{grapiglia2020tensor} without changes.
	
	\item \textbf{\citet{song2021unified} heuristic.} Following a proposal in~\cite{song2021unified}, we consider a version of \Cref{alg:ms} that uses a single pre-specified sequence of $\blambda_t$ without checking whether the resulting $\lambda_t$ is in the interval $[\frac{1}{\rho}\blambda_t, \blambda_t]$. We compute the sequence by setting $A_t = \bA_t =  \frac{1}{2HR} (t/3)^{7/2}$, and taking $\blambda_{t+1} = \frac{A_{t+1}}{a_{t+1}^2} = \frac{A_{t+1}}{(A_{t+1}-A_t)^2}$.  Here $H$ is an estimate of the function's Hessian Lipschitz constant (see below), and $R$ is an estimate of the Euclidean distance between $x_0$ from an optimal point. We obtain $R$ by using the default scikit-learn logistic regression solver~\cite{scikit-learn}; it finds a far less accurate solution than the methods we consider but provides a reasonably accurate estimate of $R$.

	\item \textbf{Iterating $\adaOracle$.} This scheme corresponds to simply iterating $x_{t+1}, \lambda_{t+1} = \adaOracle(x_t, \lambda_{t}/2)$, with the initial $\lambda_1$ set to $0.1$. 

	\item \textbf{Iterating $\foadaOracle$.} This scheme corresponds to simply iterating $x_{t+1}, \lambda_{t+1} = \foadaOracle(x_t, \lambda_{t}/2)$, with the initial $\lambda_1$ set to $0.1$. 
	
	\item \textbf{Gradient descent (GD).} We iterate $x_{t+1} = x_t - \eta \grad f(x_t)$ and choose the best value of $\eta$ from $\crl*{3, 10, 30, 100, 1000, 3000}$, making sure the best value is never on the edge of the grid,  i.e., $3$ and $3000$ are never chosen. 
	
	\item \textbf{Accelerated gradient descent (AGD)~\cite{nesterov1983method}.} We implement the algorithm precisely as described in \cite{nesterov1983method}, and tune the step size $\eta$ as described for GD.
		
	\item \textbf{L-BFGS-B~\cite{byrd1995limited,zhu1997algorithm}.} We use the implementation available from SciPy~\cite{scipy2020}, where we set all tolerance parameter to a very small value so that the algorithm only stops after exceeding the specified maximum number of iterations. 
	
	\item \textbf{$\oracle[cr].$} To solve the problem~\eqref{eq:cr-oracle} and implement $\oracle[cr]$, we perform a bisection over $\lambda$ to solve for $\lambda$ that satisfies $\lambda \approx \frac{M}{2}\norm{[\hess f(y) + \lambda I]^{-1}\grad f(y)}$, and return $x=y-[\hess f(y) + \lambda I]^{-1}\grad f(y)$. To ensure a high-quality solution to the implicit equation for $\lambda$, we stop the bisection only when $\frac{\lambda}{\frac{M}{2}\norm{[\hess f(y) + \lambda I]^{-1}\grad f(y)}} \in [1-10^{-5}, 1+10^{-5}]$. This results in a slow implementation of $\oracle[cr]$ (requiring a lot of linear system solutions), but provides the ideal point of comparison since we measure complexity by number of Hessian evaluations. To ensure numerical stability, we also stop the bisection if the value of $\lambda$ falls below $\lambda_{\mathrm{Newton}} = 10^{-10}$. 
	
	\item \textbf{$\adaOracle$.} Our implementation follows the pseudocode of \Cref{alg:adaptive-msn-step} precisely, except that, to ensure numerical stability, we stop the procedure if  $\lambda$ falls below $\lambda_{\mathrm{Newton}} = 10^{-10}$. 
	When combining the oracle with \Cref{alg:optms}, we set the $\lazyflag$ to be True in all iterations except the first, as in \Cref{coro:adaptive-msn-step}. In all other settings we set $\lazyflag$ to be False. 
	
	\item \textbf{$\foadaOracle$.} Our implementation follows the pseudocode of \Cref{alg:adaptive-msn-cg-step}, except we only implement the case that  $\lazyflag=$True, since doing otherwise appears less practical; it is not hard to extend \Cref{thm:main} and \Cref{coro:adaptive-msn-step} to provide similar guarantees even when the first iteration is lazy.  
\end{itemize}

\paragraph{Initialization.} We initialize all algorithms at the origin, i.e., with $x_0 = 0$.

\paragraph{Estimating the Hessian Lipschitz constant.} 
For all algorithms that require an estimate for the Lipschitz constant $H$ of $\hess f$ (i.e., all the algorithms that use $\oracle[cr]$), we set $H = \frac{1}{10}\bar{H}$, where 
$\opnorm{\frac{1}{n}\sum_{i=1}^{n}\phi_i \phi_i^{\T}}\max_{i \in [n]}\norm*{\phi_i}$ is a conservative upper bound on the Lipschitz constant of $\hess f$ for logistic regression~\cite{song2021unified}.  We explore the effect of varying the estimate $H$ in \Cref{app:experiments-tuning-cr}. Note that value of $M$ given to $\oracle[cr]$ is typically $2H$: for \Cref{alg:ms,alg:optms} it is $H/\sigma$ and $\sigma=2$, while CR and ACR also use $M=2H$. We also use $H$ as the initial guess for Adaptive ACR.

\paragraph{Datasets and preprocessing.}
We compare our methods to other baselines using the following binary classification datasets: 
\begin{itemize}[leftmargin=*]
	\item \textbf{a9a} ($n=32,561$ and $d=123$) 
	\item  \textbf{w8a} ($n=49,749$ and $d=300$)
	\item \textbf{splice} ($n=1,000$ and $d=60$)
	\item \textbf{synthetic} ($n=500$ and $d=200$).
\end{itemize}
The first three datasets are from LIBSVM~\cite{chang2011libsvm}, which is available under a BSD 3-Clause "New" or "Revised" license. The synthetic dataset is generated by sampling half of the data points from $\mc{N}_1(\mu_1, I)$ and the other half from $\mc{N}_1(\mu_2, I)$, where $\mu_1, \mu_2 \in \R^d$
are independent random vectors uniformly drawn from a sphere with radius $0.5$.

For all datasets we normalize the feature vectors, such that for every $i \in [n]$, each feature vector $\phi_i$ is a unit norm.

\subsection{Replicating \Cref{fig:methods-comp} with additional datasets}\label{app:experiments-extra-experiments}

In \Cref{fig:methods-comp-app} we compare all the algorithms described above on logistic regression with the datasets: ``a9a'' (panels a-c), ``w8a'' (panels d-f), ``splice'' (panels g-i) and the synthetic dataset (panels j-l).

Among non-adaptive methods (panels a, d, g and j), \Cref{alg:optms} outperforms the other non-adaptive methods while \Cref{alg:ms} is consistently the second best-performing method.

Comparing adaptive methods (panels b, e, h and k), we see that our implementation of \Cref{alg:optms} with the adaptive oracle $\adaOracle$ converges faster than adaptive ACR and \Cref{alg:ms} with $\adaOracle$ for all datasets. However, our scheme that only iterates $\adaOracle$ without momentum converges even faster, and Newton's method outperforms all second-order methods.

For first-order methods (panels c, f, i and l),  iterating $\foadaOracle$ scheme is comparable to L-BFGS-B on 2 out of 4 datasets and is faster than tuned AGD in 3 out of 4 datasets. On the synthetic dataset it is about twice slower than L-BFGS-B but still faster than tuned AGD, while on w8a it is about 50\% slower than L-BFGS-B and tuned AGD, which perform comparably.

\begin{figure}[p]
	\centering
   \includegraphics[width=0.9\textwidth]{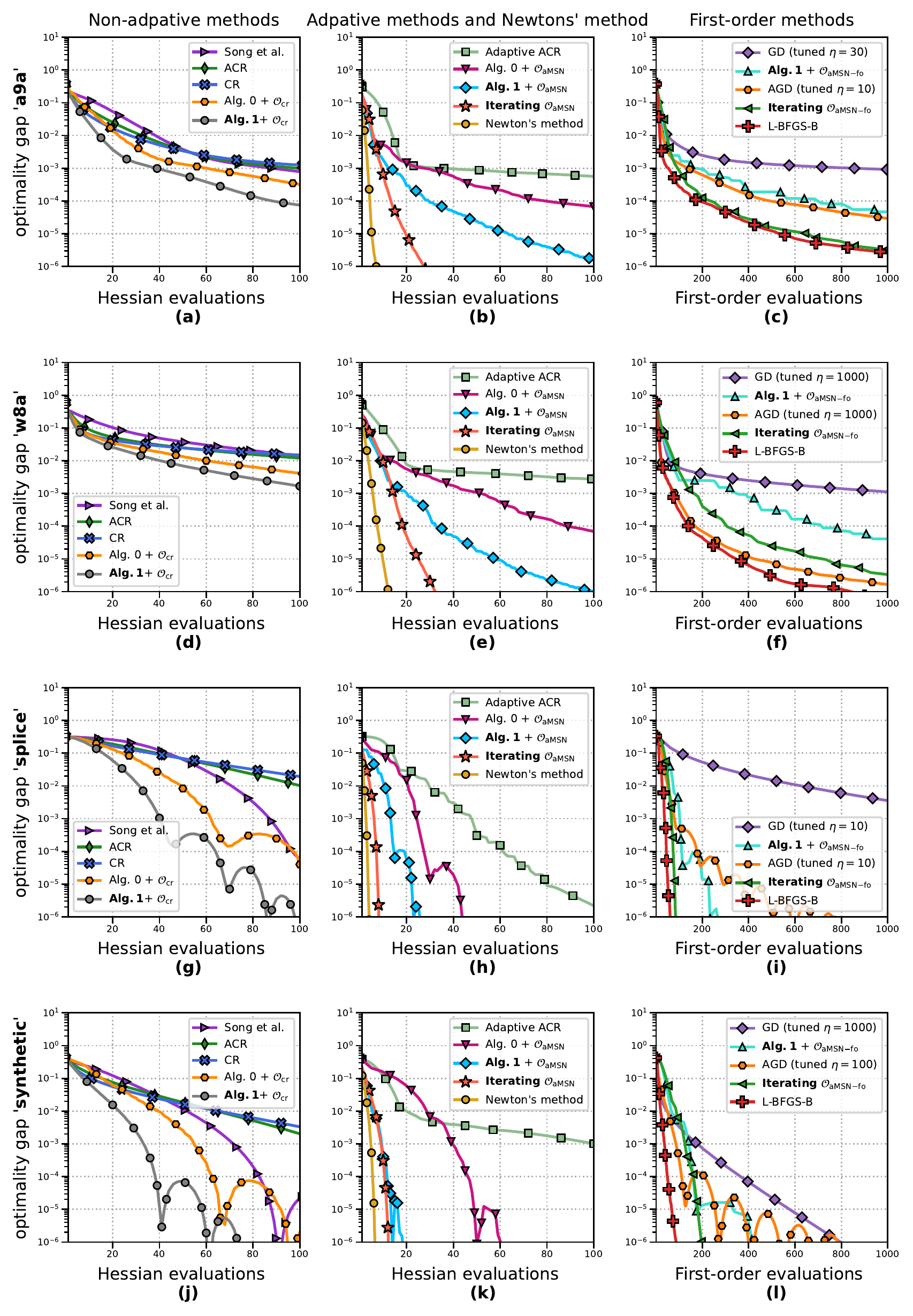}
   \caption{Empirical results on logistic regression with ``a9a'', ``w8a'', ``splice'' and a synthetic dataset. Boldface legend entries denote methods we contribute.
   }\label{fig:methods-comp-app}
\end{figure}

\subsection{Parameter sensitivity of \Cref{alg:optms}}\label{app:experiments-parameter-sensitivity}
We test the sensitivity of \Cref{alg:optms} combined with our adaptive oracle (\Cref{alg:adaptive-msn-step} or \Cref{alg:adaptive-msn-cg-step}) to the parameters $\alpha$ and $\sigma$. \Cref{fig:param-sensitivity} shows that \Cref{alg:optms} second-order oracle $\adaOracle$ performs essentially the same for all $\alpha$ in the range $1.2$ to $8$, and that the oracle's performance is similar for $\sigma=0.1$ and $\sigma=0.25$, but slightly degrades for larger and smaller $\sigma$.  \Cref{alg:optms} combined with the first-order oracle $\foadaOracle$ is a bit more sensitive to $\alpha$ (performing best for $\alpha=1.2$), but is less sensitive to $\sigma$, showing similar performance for all $\sigma$ values except the very smallest $\sigma=0.01$.

\begin{figure}[tb]
	\centering
    \begin{subfigure}[b]{0.4\textwidth}
        \includegraphics[width=1\textwidth]{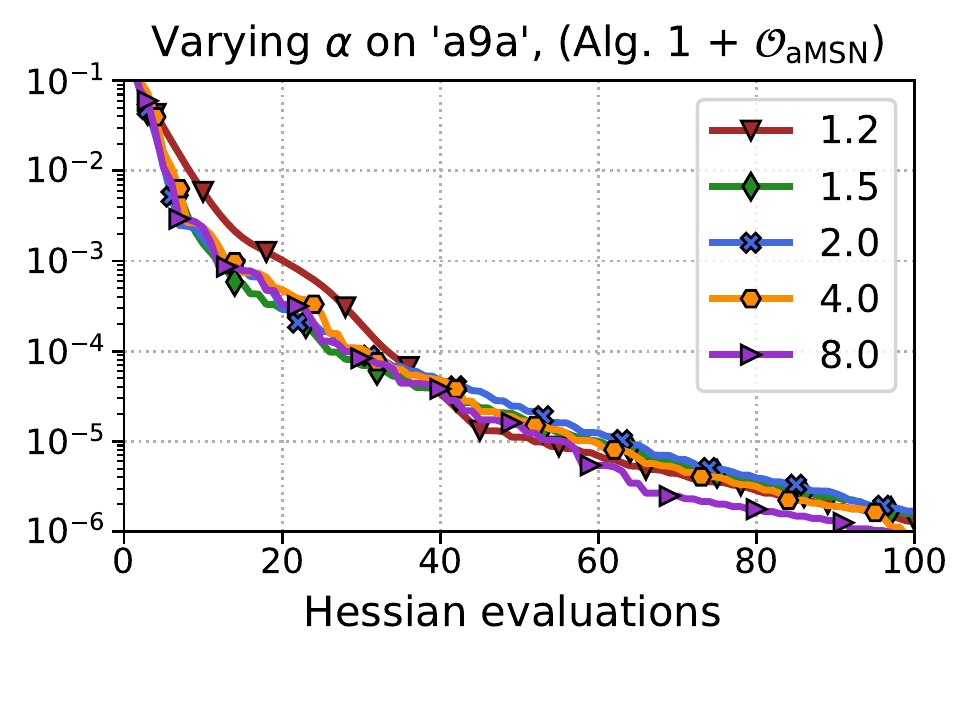}
    \end{subfigure}
    \hspace{0.1cm}
    \begin{subfigure}[b]{0.40\textwidth}
        \includegraphics[width=1\textwidth]{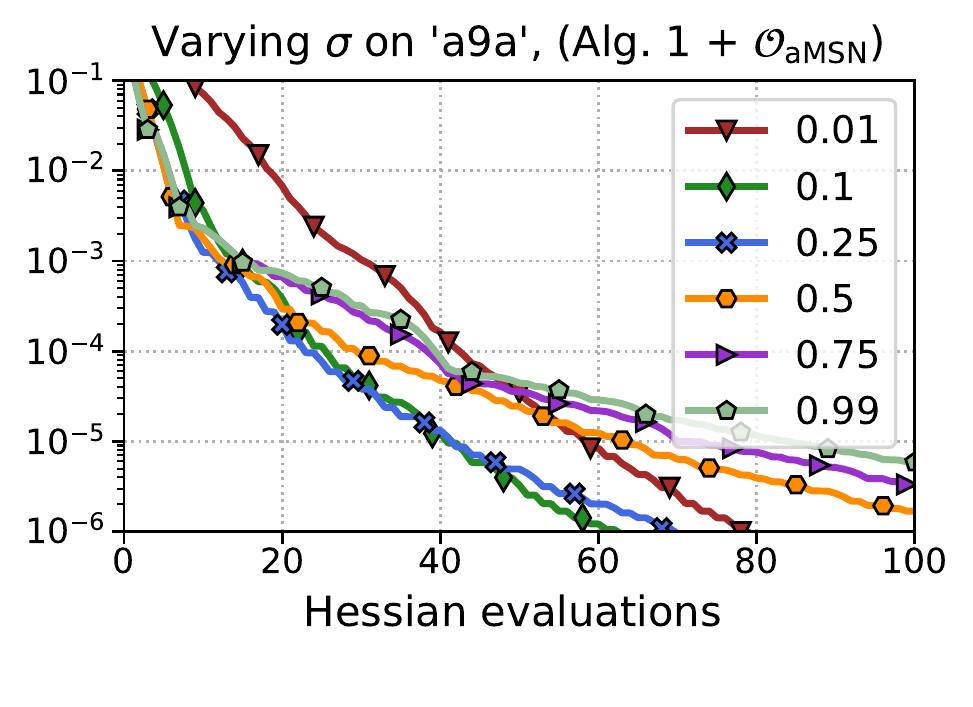}
    \end{subfigure}
    \vfill
    \begin{subfigure}[b]{0.40\textwidth}
        \includegraphics[width=1\textwidth]{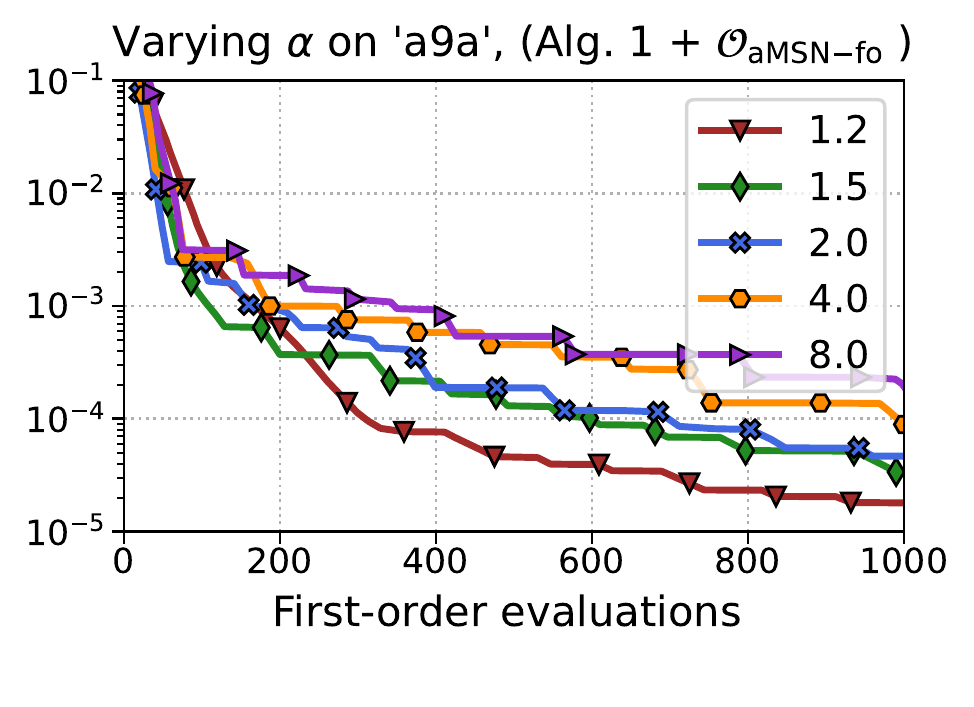}
    \end{subfigure}
    \begin{subfigure}[b]{0.40\textwidth}
    \hspace{0.1cm}
        \includegraphics[width=1\textwidth]{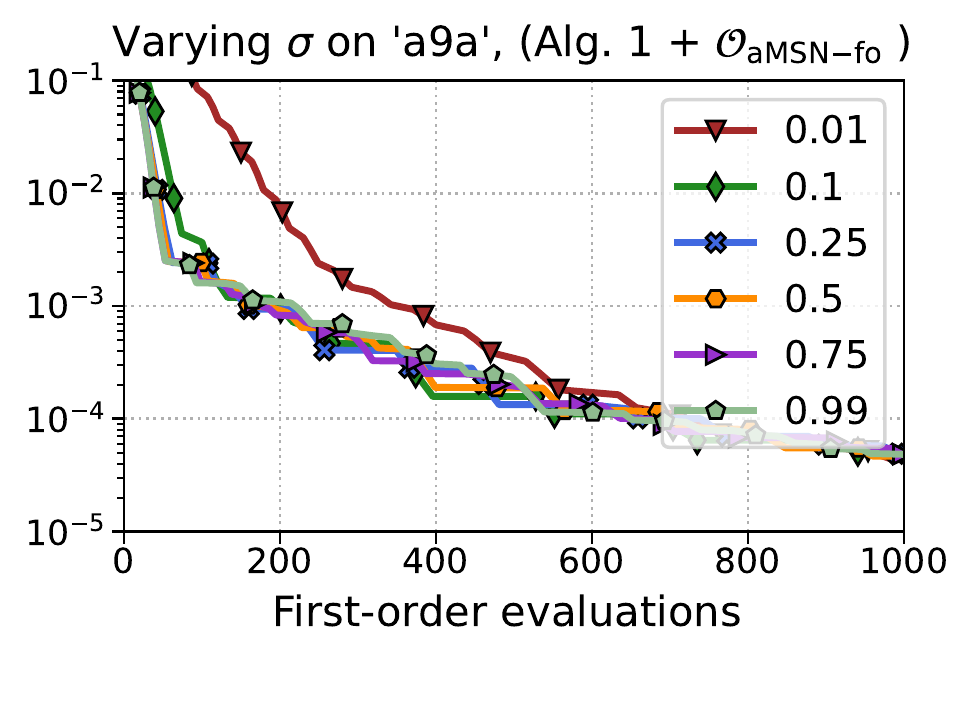}
    \end{subfigure}
    \caption{Testing the sensitivity of \Cref{alg:optms} to the parameters $\alpha$ and $\sigma$ with the``a9a'' dataset.
    }\label{fig:param-sensitivity}
\end{figure}

\subsection{Varying $M$ for $\oracle[cr]$}\label{app:experiments-tuning-cr}
In this section we test the performance of non-adaptive methods (i.e., the methods that use $\oracle[cr]$) when changing the estimate of the function's Lipschitz constant $H$. In particular, we consider values of $H$ of the form $\beta \bar{H}$, where $\bar{H}=\opnorm{\frac{1}{n}\sum_{i=1}^{n}\phi_i \phi_i^{\T}}\max_{i \in [n]}\norm*{\phi_i}$ is an upper bound on the Hessian Lipschitz constant and $\beta$ varies in $\crl*{1, 10^{-1}, 10^{-2}, 10^{-3}, 10^{-4}, 10^{-5}, 10^{-6}, 10^{-7}, 10^{-8}}$. (The experiments in \Cref{app:experiments-extra-experiments} correspond to $\beta=0.1$). \Cref{fig:tuning-cr} shows that our adaptive accelerated scheme (\Cref{alg:optms} with the $\adaOracle$) outperforms all non-adaptive methods with their optimal $H$ value, except for the CR method that has optimal $H  \approx 0$ and therefore is almost equivalent to Newton's method.

\begin{figure}[tb]
	 \centering
	\includegraphics[width=0.9\textwidth]{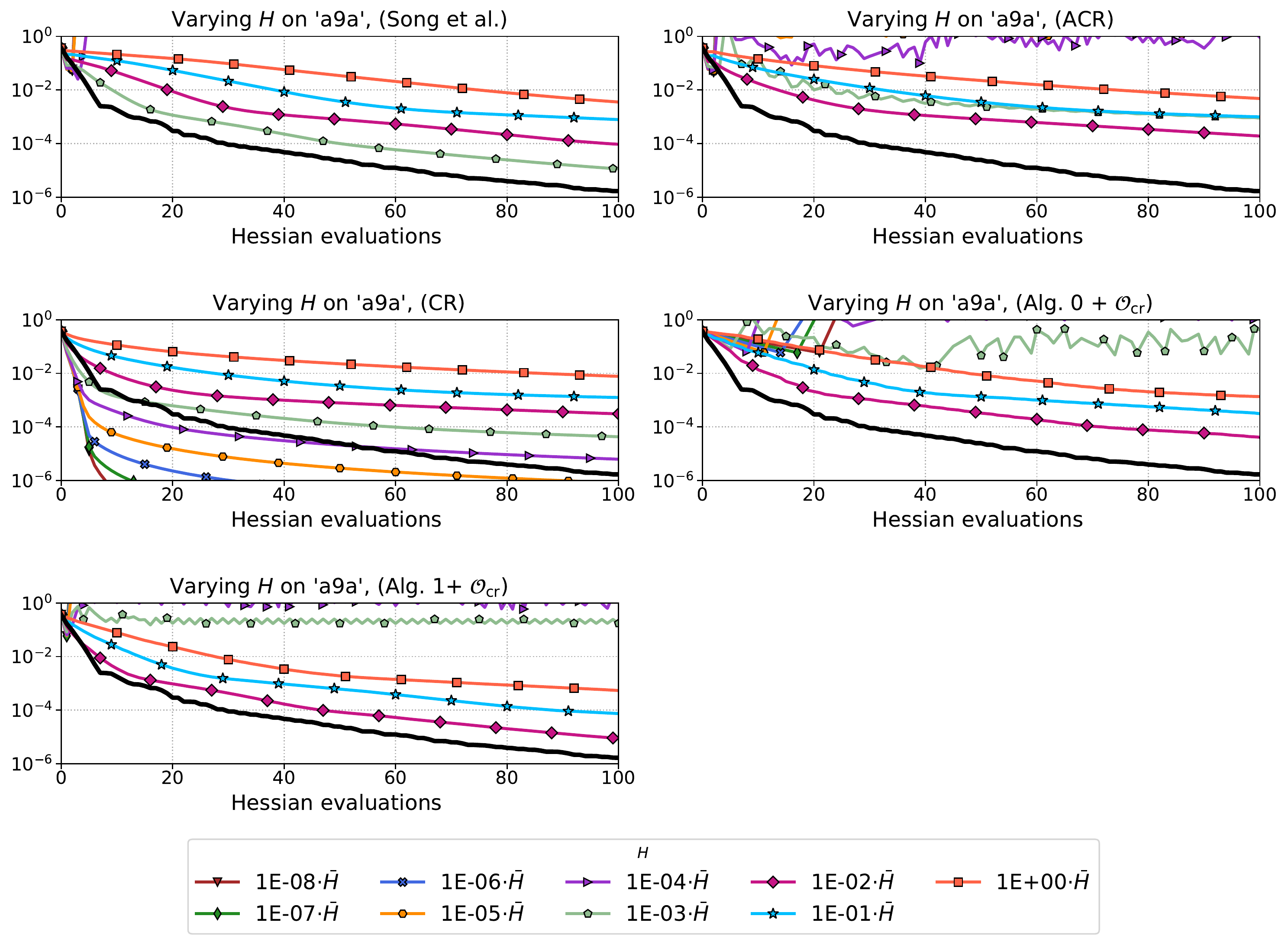}
	\caption{Varying the estimated Hessian Lipschitz constant $H$ non-adaptive methods. The thick black line corresponds our adaptive method (\Cref{alg:optms} with $\adaOracle$).}\label{fig:tuning-cr}
\end{figure}

\subsection{Performance on a worst-case instance}\label{app:experiments-worst-case}

Having observed that our adaptive oracle $\adaOracle$ performs better on logistic regression without the ``acceleration'' scheme in \Cref{alg:optms}, we now test whether \Cref{alg:optms} demonstrably accelerates $\adaOracle$ on a different, harder problem. In particular, we consider the worst case instance \cite{arjevani2019oracle,grapiglia2020tensor,doikov2020inexact} $f:\R^d \to \R$ given by
\begin{align*}
    f(x) = \abs*{x^{(1)}-1}^3 + \sum_{i=2}^{d}\abs*{x^{(i)}-x^{(i-1)}}^3.
\end{align*}
We note that, for $t<d$, the optimal rate of convergence for any of the algorithms we consider (which can only ``discover'' one coordinate of $f$ per iterations) is $O(t^{-2})$, or $O\prn*{\norm[\big]{x_0-\xopt^{(t)}}^3 t^{-3.5}}$ where  $\norm[\big]{x_0-\xopt^{(t)}}=\Theta(\sqrt{t})$ is the distance between the initial point and the best solution with only $t$ non-zero coordinates.

In our experiments, we set $d=3,000$ and compare the convergence rate of the following second-order methods: standard cubic regularized method (CR), its accelerated variant (ACR), \Cref{alg:optms} with $\oracle[cr]$, \Cref{alg:optms} with the adaptive oracle $\adaOracle$, and iterating the oracle $\adaOracle$. For methods based on $\oracle[cr]$ we estimate the Hessian Lipschitz constant to be $H=10$.  \Cref{fig:worst-case} shows that the slope of the accelerated methods using $\oracle[cr]$ (ACR and \Cref{alg:optms}) is sharper than the slope of the CR method, indicating a faster convergence rate due to the acceleration scheme.
However, the convergence rate $\adaOracle$ with and without the acceleration component is optimal. Therefore, even the worst-case instance for convex optimization with Lipschitz Hessian does not provide evidence that acceleration significantly benefits $\adaOracle$.

\begin{figure}[tb]
	 \centering
	\includegraphics[width=0.75\textwidth]{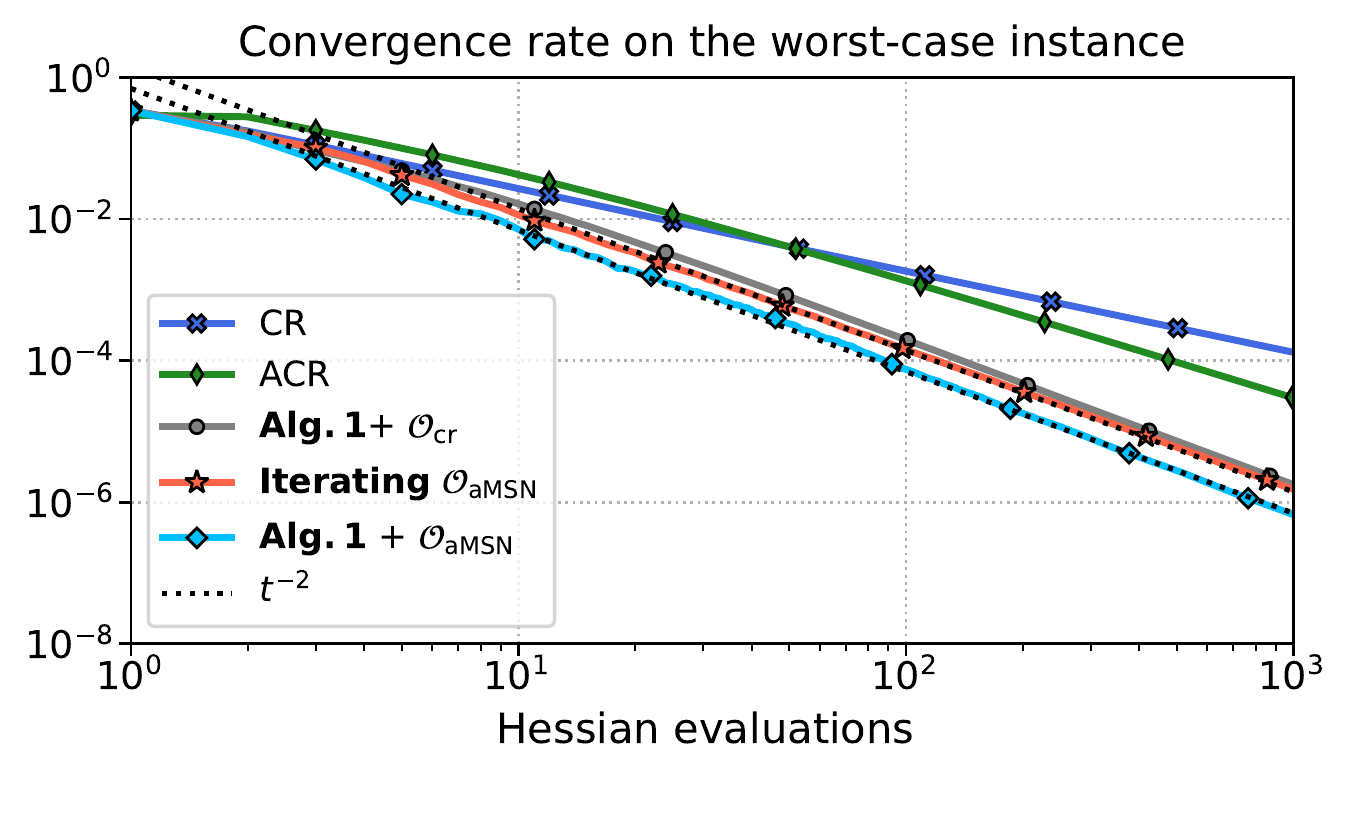}
	\caption{Empirical results on the worst case instance (the x-axis and y-axis are in logarithmic scale). Boldface legend entries denote methods we contribute.}
\label{fig:worst-case}
\end{figure}

\subsection{The importance of momentum damping in  \Cref{alg:optms}}\label{app:experiments-momentum-damping-mechanism}
We compare our method (\Cref{alg:optms} with $\adaOracle$) to a variant of it that does not use the momentum damping mechanism. That is, we set $x_{t+1}=\tx_{t+1}$ and $a_{t+1}=\ba_{t+1}$ regardless of the value of $\lambda_{t+1}$. As \Cref{fig:damping-mechanism} shows, without the momentum damping mechanism \Cref{alg:optms} fails to converge on all the datasets we test (``a9a'', ``w8a'', ``splice'', and the synthetic dataset). 

\begin{figure}[tb]
	\centering
   \includegraphics[width=0.9\textwidth]{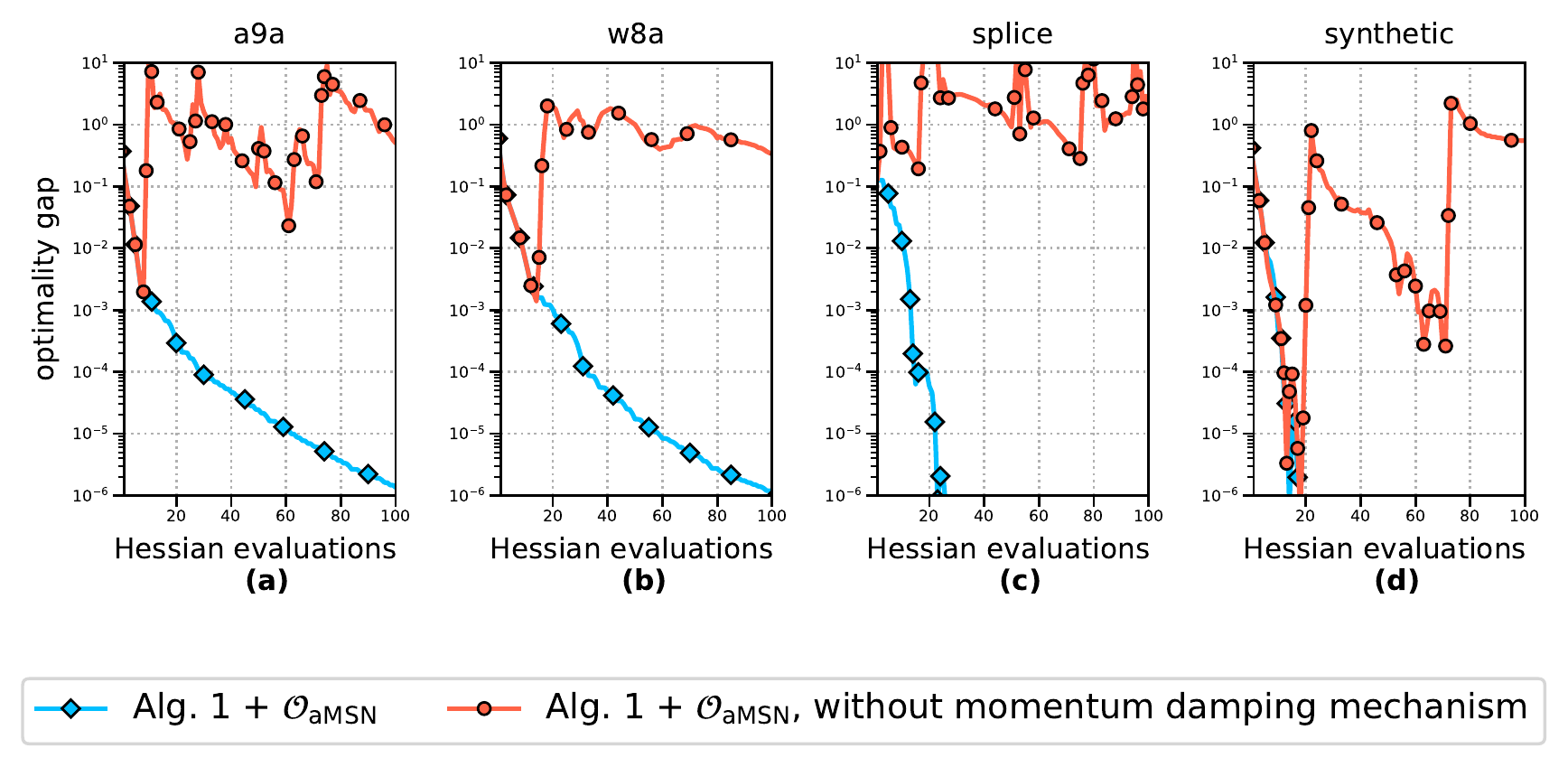}
   \caption{\Cref{alg:optms} with (light blue line) and without (red line) the momentum damping mechanism. Title denotes the dataset name.}
\label{fig:damping-mechanism}
\end{figure}

\end{document}